\newtheorem{theorem}{Theorem}[section]
\newtheorem{proposition}[theorem]{Proposition}
\newtheorem{lemma}[theorem]{Lemma}
\newtheorem{corollary}[theorem]{Corollary}
\newtheorem{remark}[theorem]{Remark}
\newtheorem{definition}[theorem]{Definition}
\newtheorem{example}[theorem]{Example}
\def\Hom{{\rm Hom}}
\def\C{\mathbb{C}}
\def\Q{\mathbb{Q}}
\def\T{\mathbb{T}}
\def\O{{\mathcal O}}
\def\D{{\mathcal D}}
\def\Z{\mathbb{Z}}
\def\P{\mathbb{P}}
\def\m{\mathfrak{m}}
\def\qed{\hfill$\Box$\s}
\def\s{\vskip10pt}
\def\codim{{\rm codim}}
\def\dim{{\rm dim}}
\title[Equivariant Hirzebruch class for singular varieties]
{ Equivariant Hirzebruch class \\for singular varieties}
\author{Andrzej Weber}\thanks{Supported by NCN grant 2013/08/A/ST1/00804
}
 \address{Department of Mathematics of Warsaw University\\
 Banacha 2, 02-097 Warszawa, Poland} \email{aweber@mimuw.edu.pl}
\begin{document}

\begin{abstract}  We develop an equivariant version of the
Hirzebruch class for singular varieties. When the group acting is a torus we apply
Localization Theorem of Atiyah-Bott and Berline-Vergne. The localized Hirzebruch class is an
invariant of a singularity germ.
 The singularities of toric varieties and Schubert varieties are of
special interest. We prove certain positivity results for simplicial toric varieties.
The positivity for Schubert varieties is illustrated by many examples, but it remains mysterious.
\end{abstract}

\keywords{Characteristic classes of singular varieties, Hirzebruch class, equivariant cohomology, toric varieties, Schubert varieties}

\subjclass{14C17, 14E15, 14F43, 19L47, 55N91, 14M25, 14M15}
\maketitle

The main goal of the paper is to show how a theory of   global
invariants can be applied to study local objects equipped with an
action of a large group of symmetries.
 The theory of global invariants
we are going to discuss is the theory of characteristic classes,
more precisely the Hirzebruch class and $\chi_y$-genus. By \cite{Yo, BSY} the Hirzebruch class admits a
generalization for singular varieties.
 Let $X$ be an  algebraic variety in a compact complex algebraic manifold $M$. Suppose that $X$ is preserved by a torus $\T$ acting on $M$. For simplicity assume that the fixed point set $M^\T$ is discrete.
Then by
Localization Theorem of Atiyah-Bott and Berline-Vergne
the $\chi_y$-genus of $X$ can be written as a sum of contributions coming from fixed points. The contribution of a fixed point $p\in M^\T$ is equal to the equivariant Hirzebruch class restricted to that point and divided by the Euler class of the tangent space at $p$
$$\chi_y(X)=\sum_{p\in M^\T}\frac{td^\T_y(X\hookrightarrow M)_{|p}}{eu(p)}\,.$$
The local contributions to $\chi_y$-genus are fairly computable and they are expressed by  polynomials in characters of the torus.
We will describe all the necessary components of the described construction, give various examples and we will discuss positivity property of the localized Hirzebruch class.

A relation with the Bia\l ynicki-Birula decomposition \cite{B-B} will be given in a subsequent paper \cite{Wbb}.
The reader can find useful to look at the article \cite{We2},
where an elementary and self-contained introduction to equivariant characteristic classes   is given.

\tableofcontents

\section{Localization formulae}
The $\chi_y$-genus  of a  smooth compact complex algebraic
variety $X$ was defined  by Hirzebruch \cite{Hi} as a formal combination of Euler
characteristics of the sheaves of differential forms
 \begin{equation}\chi_y(X):=\sum_{p=0}^{\dim X} \chi(X,\Omega_X^p) y^p\,.\label{chidef}\end{equation}
 The
Hirzebruch-Riemann-Roch theorem allows to express that numerical
invariant as an integral
$$\chi_y(X)=\int_Xtd_y(X) =\int_X td(TX)\cdot ch(\Lambda_y(T^*X))\,.$$
The following multiplicative characteristic classes appear in the
formula
\begin{itemize}
\item  the Todd class $td(TX)$ is the multiplicative characteristic class associated to the power series $td(x)=\frac x{1-e^{-x}}$

\item $ch(\Lambda_y TX^*)$ is the composition of the total $\lambda$-operation
    $$\Lambda_y(-)=\sum_{p=0}^{\dim X}\Lambda^p(-) y^p$$
    and the Chern character. This way a multiplicative characteristic class is obtained. It is associated to the power series
     $\varphi(x)=1+y\,e^{-x}\,.$
\end{itemize}

The equivariant setup demands some explanation but  the applied
tools are basically  standard. Suppose $\T=(\C^*)^r$ acts on a
manifold $X$ with finite number of fixed points. The tangent bundle of $X$ is an equivariant $\T$-bundle.
The equivariant
cohomology $H^*_\T(X)$ constructed by Borel is a suitable graded ring,
where the equivariant characteristic classes live. In particular we consider the equivariant Hirzebruch class $td^\T_y(X)$. The class $td^\T_y(X)$ may be nontrivial in arbitrary high gradation, therefore by $H^*_\T(X)$ we understand not the direct sum but the product $\prod_{k=0}^\infty H^k_\T(X)$. The equivariant cohomology has
more structure than ordinary cohomology, since it is a module over
$H^*_\T(pt)$, which is the symmetric algebra spanned by characters
$\Hom(\T,\C^*)$. Our main tool is the Localization Theorem for
torus action. Essentially it is due to Borel \cite[Ch.XII
\S6]{Bo}, but here is the formulation by Quillen:
\begin{theorem}[\cite{Qu}, Theorem 4.4] The restriction map $$H^*_\T(X)\to H^*_\T(X^\T)$$\label{locH}
is an isomorphism after localization in the multiplicative system generated by nontrivial characters.\label{quloc} \end{theorem}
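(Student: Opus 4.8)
The plan is to run the classical localization argument of Borel, Atiyah--Bott and Quillen. Localization $S^{-1}(-)$ at the multiplicative system $S\subset H^*_\T(pt)=\mathrm{Sym}\big(\Hom(\T,\C^*)\otimes\Q\big)$ generated by the nontrivial characters is an exact functor, so the statement reduces to two facts: first, that the equivariant cohomology of the open complement $U:=X\setminus X^\T$ is annihilated by $S^{-1}(-)$; second, that the equivariant Euler class of the normal bundle of $X^\T$ becomes invertible after localization. Granting both, one assembles them through the long exact sequence of the pair $(X,U)$.

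\emph{Step 1: the complement dies.} The heart of the proof is the lemma that a $\T$-space $Z$ with $Z^\T=\emptyset$ has $S^{-1}H^*_\T(Z)=0$, assuming the mild finiteness needed to make an induction work (finitely many orbit types and bounded equivariant cohomological dimension --- automatic for a compact space or a complex algebraic variety). I would argue by induction on the number of orbit types via Mayer--Vietoris, covering $Z$ by invariant open sets each equivariantly retracting onto a single orbit $\T/H$ with $H\subsetneq\T$ a proper closed subgroup. Then $H^*_\T(\T/H)\cong H^*_H(pt)$, and since $\Hom(\T,\C^*)\otimes\Q\to\Hom(H,\C^*)\otimes\Q$ has nonzero kernel there is a nontrivial character $\chi\in\Hom(\T,\C^*)$ acting as $0$ on $H^*_H(pt;\Q)$; hence $S^{-1}H^*_\T(\T/H)=0$, and Mayer--Vietoris together with exactness of $S^{-1}(-)$ propagates the vanishing. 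Applying the lemma to $U$ gives $S^{-1}H^*_\T(U)=0$.

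\emph{Step 2: from $X$ down to $X^\T$.} The localized long exact sequence of $(X,U)$ and the vanishing $S^{-1}H^*_\T(U)=0$ yield an isomorphism $S^{-1}H^*_\T(X,U)\xrightarrow{\ \sim\ }S^{-1}H^*_\T(X)$. By excision $H^*_\T(X,U)\cong H^*_\T(N,N\setminus X^\T)$ for a $\T$-invariant tubular neighbourhood $N$ of $X^\T$, which is the Thom space of the equivariant normal bundle $\nu$; the Thom isomorphism rewrites it, on each connected component $F\subset X^\T$, as $H^{*-2\codim_X F}_\T(F)$. Tracing the Thom class, the composite $H^{*-2\codim_X F}_\T(F)\to H^*_\T(X)\xrightarrow{\ \mathrm{res}\ }H^*_\T(F)$ is cup product with the equivariant Euler class $e_\T(\nu|_F)$. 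Since $F$ is exactly the fixed locus, $\T$ acts on $\nu|_F$ with only nonzero weights, so under $H^*_\T(F)=H^*(F)\otimes H^*_\T(pt)$ the class $e_\T(\nu|_F)$ is congruent, modulo the nilpotent ideal $H^{>0}(F)\otimes H^*_\T(pt)$, to $\prod_\chi\chi^{\,\mathrm{rk}\,\nu_\chi}$ with all $\chi\neq0$; this product lies in $S$, so $e_\T(\nu|_F)$ is a unit in $S^{-1}H^*_\T(F)$. Hence the composite is an isomorphism after localization, and combined with the isomorphism $S^{-1}H^*_\T(X,U)\cong S^{-1}H^*_\T(X)$ already obtained, the restriction $S^{-1}H^*_\T(X)\to S^{-1}H^*_\T(X^\T)$ is an isomorphism.

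The step I expect to be the real obstacle is the vanishing lemma of Step 1: making the Mayer--Vietoris induction rigorous requires genuine control of the orbit structure and of equivariant cohomological dimension, which is exactly the bundle of point-set hypotheses that Quillen builds into his framework; in the algebraic category one extracts them from an equivariant stratification. The tubular-neighbourhood and Thom-isomorphism ingredients of Step 2 are routine for smooth $X$, but they are precisely the structures that will need replacements once the paper turns to singular varieties.
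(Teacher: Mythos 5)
The paper offers no proof of this statement: it is imported verbatim from Quillen (and attributed to Borel), so there is nothing internal to compare against. Your reconstruction is the classical Borel--Atiyah--Bott--Quillen argument and is correct in outline; Step 1 (the vanishing lemma $S^{-1}H^*_\T(Z)=0$ for $Z^\T=\emptyset$, proved by reducing to a single orbit $\T/H$ and finding a nontrivial character that dies in $H^*(BH;\Q)$) is indeed the heart of Quillen's proof, and your caveats about orbit-type finiteness and cohomological dimension are exactly the hypotheses Quillen axiomatizes. Two remarks. First, your Step 2 is heavier than the theorem requires and quietly narrows its scope: tubular neighbourhoods and the Thom isomorphism presuppose that $X$ is smooth and $X^\T$ a submanifold, whereas the statement (and Quillen's Theorem 4.4) concerns general $\T$-spaces, and the paper leans on this generality when it passes to singular varieties and to the parallel statements in $K$-theory and equivariant homology. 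The standard way to finish without smoothness is to show directly that $S^{-1}H^*_\T(X,X^\T)=0$: replace $X^\T$ by an invariant neighbourhood that deformation retracts onto it, excise the fixed set, and apply the Step-1 lemma to the resulting pair of fixed-point-free spaces; the long exact sequence then gives the isomorphism with no Euler class in sight. Second, the invertibility of $e_\T(\nu|_F)$ that you establish is really the content of a different result in the paper --- the Localization/Integration Formula of Atiyah--Bott (Theorem \ref{locfo} and formula (\ref{int})), which produces the explicit inverse $a\mapsto\sum_p i_{p*}\bigl(i_p^*(a)/eu(p)\bigr)$ and genuinely does require smoothness --- so it is worth keeping the two statements, and their hypotheses, separate.
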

 Atiyah-Bott \cite[Formula 3.8]{AB} or Berline-Vergne \cite{BrVe}
Localization Formula (see Theorem \ref{locfo} and the following Integration Formula) allows to express integral of equivariant
forms in terms of some data concentrated at the fixed points. We apply the localization formula
to the
equivariant Hirzebruch class $td^\T_y(X)$. By rigidity (see \cite{Mu}) we obtain just the class
of gradation zero, the $\chi_y$-genus
\begin{theorem}$$\chi_y(X)=\sum_{p\in X^\T}\frac1{eu(p)}{td^\T_y(X)_{|p}}\,.$$\label{loc}\end{theorem}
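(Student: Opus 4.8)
The plan is to apply the Atiyah--Bott--Berline--Vergne localization formula to the equivariant Hirzebruch class and then collapse the answer to cohomological degree zero using rigidity.

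First I would record the local data. Since $\T$ acts with isolated fixed points, at each $p\in X^\T$ the tangent space $T_pX$ splits as a sum of one-dimensional $\T$-representations, and $eu(p):=eu(T_pX)\in H^*_\T(pt)$ is the product of the corresponding characters; in particular $eu(p)$ is invertible in the localization $S^{-1}H^*_\T(pt)$, where $S$ is the multiplicative system of Theorem~\ref{quloc}. The class $td^\T_y(X)$, assembled from the equivariant Todd class of $TX$ and the equivariant Chern character of $\Lambda_y T^*X$, is a well-defined equivariantly closed element of $\widehat H^*_\T(X)[y]$. Applying the Integration Formula that follows Theorem~\ref{locfo} in each cohomological degree and each power of $y$ then yields
$$\int_X td^\T_y(X)\;=\;\sum_{p\in X^\T}\frac{td^\T_y(X)_{|p}}{eu(p)}$$
in $S^{-1}\widehat H^*_\T(pt)[y]$, where $\int_X$ denotes the equivariant pushforward to a point.

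Next I would identify the left-hand side. By the equivariant Hirzebruch--Riemann--Roch theorem it equals the equivariant $\chi_y$-genus, a finite combination of characters of $\T$ with coefficients polynomial in $y$; in particular it has no poles. Moreover, passing to the trivial torus action (equivalently, setting the equivariant parameters to $0$) commutes with the pushforward, so the cohomological degree-zero component of $\int_X td^\T_y(X)$ equals $\int_X td_y(X)=\chi_y(X)$ by ordinary Hirzebruch--Riemann--Roch.

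The crucial step is rigidity of the $\chi_y$-genus (\cite{Mu}): the equivariant $\chi_y$-genus is independent of the equivariant parameters, hence $\int_X td^\T_y(X)$ has no component of positive cohomological degree and equals its degree-zero part $\chi_y(X)$. Substituting this into the identity of the first step proves the theorem. I expect this rigidity to be the real obstacle: a priori the equivariant pushforward could be a nontrivial power series --- after localizing, a nonconstant rational function --- in the characters of $\T$, and ruling this out is precisely what rigidity asserts. To establish it by hand rather than quote \cite{Mu}, one would restrict to a generic one-parameter subtorus, regard the fixed-point sum above as a rational function of the single parameter $t$, and use the precise shape of the series $x/(1-e^{-x})$ and $1+y\,e^{-x}$ entering $td^\T_y$ to show that this function stays bounded as $t\to 0$ and as $t\to\infty$, and is therefore constant; by the previous paragraph the constant is $\chi_y(X)$.
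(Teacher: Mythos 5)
Your proposal is correct and follows essentially the same route as the paper: the paper obtains this identity by applying the Atiyah--Bott--Berline--Vergne integration formula to $td^\T_y(X)$ and then invoking rigidity of the $\chi_y$-genus (citing \cite{Mu}) to identify $\int_X td^\T_y(X)$ with its degree-zero part $\chi_y(X)$. Your closing sketch of how to prove rigidity by hand (boundedness of the fixed-point sum as a rational function of $T=e^{-t}$ at $0$ and $\infty$) is the standard argument behind the cited reference and is a reasonable supplement, not a deviation.
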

Here the Euler class   $eu(p)=\prod w_i$ is the product of weights
$w_i\in \Hom(\T,\C^*)=H^2_\T(pt)$ appearing in the tangent
representation $T_pX$. Note that the  $\chi_y$-genus
can be written in the form
$$\chi_y(X)=\sum_{p\in X^\T}\frac1{eu(p)}td^\T(X)\cdot  ch^\T(\Lambda_y(T^*X))_{|p}\,.$$
The local contribution of each fixed point
$ch^\T(\Lambda_y(T^*X))_{|p}$ is multiplied by
\begin{equation}\frac1{eu(p)}td(X)^\T_{|p}=\prod \frac1{w_i}\cdot \prod
\frac{w_i}{1-e^{-w_i}}=\prod \frac1{1-e^{-w_i}}\,.\label{locconsm}\end{equation} This
multiplier is equal to the inverse of the Chern character of the
class of the skyscraper sheaf $\O_{\{p\}}$.
 In fact we can localize before applying Chern
character and work in the topolo\-gical equivariant $K$-theory defined by Segal \cite{Se}.
The  localization in $K$-theory has a parallel shape comparing with Theorem \ref{quloc}
\begin{theorem}[\cite{Se}, Proposition 4.1]The restriction map $$K_\T(X)\to K_\T(X^\T)$$\label{locK}
is an isomorphism after localization in the multiplicative system\footnote{The original formulation of the localization theorem is stronger. Here we apply it for the localization in the dimension ideal. For an arbitrary prime ideal $\frak p\subset K_\T(pt)=Rep(\T)$ there is a maximal group $H\subset \T$, such that $K_\T(X)_{\frak p}\stackrel{\sim}\to K_\T(X^H)_{\frak p}$. For a smaller ideal $\frak p$ we have to take a smaller subgroup $H$.  }
 generated by $$T_{w}-1\in K_\T(pt)$$ for all nonzero characters $w\in Hom(\T,\C^*)$, where $T_{w}$ is the element of $K_\T(pt)$ associated to the one dimensional representation with character $w$.
\end{theorem}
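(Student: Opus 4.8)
This statement is Segal's localization theorem, so the plan is to recall the argument in the form adapted to the complex torus $\T=(\C^*)^r$. Since neither equivariant $K$-theory nor equivariant cohomology changes if we replace $\T$ by its maximal compact subgroup $(S^1)^r$, I would work with a compact torus, so that the slice theorem is at our disposal: a $\T$-invariant neighbourhood of an orbit $\T\cdot x\subset X$ is $\T$-equivariantly homeomorphic to a $\T$-vector bundle over the orbit $\T/\T_x$. Write $S$ for the multiplicative system in $K_\T(pt)$ generated by the elements $T_w-1$ with $w\in\Hom(\T,\C^*)$ nonzero. Because localization $M\mapsto S^{-1}M$ is exact, applying it to the long exact sequence of the pair $(X,X^\T)$ shows that the claimed isomorphism is equivalent to the vanishing $S^{-1}K^*_\T(X,X^\T)=0$.

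The proof of this vanishing is a reduction to orbits. For the compact algebraic varieties we care about, $X$ admits a finite $\T$-CW structure (equivalently, a finite $\T$-invariant algebraic stratification) in which $X^\T$ is a subcomplex, so $X$ is built from $X^\T$ by successively attaching finitely many equivariant cells, each of orbit type $\T/H$ with $H\subsetneq\T$ a proper closed subgroup. Running an induction on the number of such cells, and using Mayer--Vietoris together with the Thom isomorphism for the normal $\T$-vector bundles supplied by the slice theorem, the problem is reduced to showing that a single cell contributes nothing after localization, that is, $S^{-1}K_\T(\T/H)=0$ for every proper closed subgroup $H\subset\T$.

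This last point is pure representation theory. One has $K_\T(\T/H)\cong Rep(H)$, where the $Rep(\T)$-module structure is given by restriction of representations $Rep(\T)\to Rep(H)$. Since $\T=(\C^*)^r$ is divisible it has no proper subgroup of finite index, so for a proper closed subgroup $H$ the quotient $\T/H$ is a non-trivial diagonalizable group and therefore carries a nontrivial character; pulling it back gives $w\in\Hom(\T,\C^*)$, nonzero as a character of $\T$ but trivial on $H$. For this $w$ the element $T_w-1$ acts on $Rep(H)$ as $T_{w|_H}-1=1-1=0$, hence inverting $T_w-1$ kills $Rep(H)$, i.e. $S^{-1}Rep(H)=0$. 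Feeding this into the induction yields $S^{-1}K^*_\T(X,X^\T)=0$, and the theorem follows.

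I expect the only real work to be organizational: setting up the slice theorem and the finite equivariant cell (or stratification) structure with $X^\T$ as a subcomplex, and verifying that $S^{-1}(-)$ interacts correctly with the attaching maps and with the Thom isomorphisms; once this scaffolding is in place the representation-theoretic vanishing is immediate and forces the result. The cohomological analogue, Theorem \ref{quloc}, is proved by the identical scheme, with $H^*_\T(\T/H)\cong H^*_H(pt)$ replacing $Rep(H)$: a nonzero character of $\T$ vanishing on $H$ yields a degree-two class in $H^*_\T(pt)$ that maps to $0$ in $H^*_H(pt)$, so inverting the nonzero characters annihilates $H^*_\T(\T/H)$.
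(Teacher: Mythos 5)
Your argument is correct in outline, but be aware that the paper itself offers no proof of this statement: it is quoted from Segal (\cite{Se}, Proposition 4.1), and the footnote shows the author deducing the version stated here from Segal's stronger prime-by-prime formulation. That route runs: for every prime $\mathfrak{p}\subset K_\T(pt)=Rep(\T)$ there is a support subgroup $H$ with $K_\T(X)_{\mathfrak{p}}\simeq K_\T(X^H)_{\mathfrak{p}}$; if $\mathfrak{p}$ avoids all the elements $T_w-1$, the support must be all of $\T$ (otherwise some nonzero $w$ trivial on $H$ would put $T_w-1$ into the kernel of $Rep(\T)\to Rep(H)$ and hence into $\mathfrak{p}$), so $K_\T(X,X^\T)_{\mathfrak{p}}=0$ for every such $\mathfrak{p}$, which is equivalent to vanishing after inverting the multiplicative system. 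Your proposal instead reproves the special case directly: exactness of localization, reduction to orbits via a finite equivariant cell structure, $K_\T(\T/H)\cong Rep(H)$, and the observation that $T_w-1$ acts as zero on $Rep(H)$ for a nonzero character $w$ killing $H$. The representation-theoretic kernel is identical in the two arguments; what your version buys is independence from Segal's machinery of supports of primes, at the price of having to supply the finite $\T$-CW structure with $X^\T$ as a subcomplex and the Thom isomorphisms for the normal slices --- for possibly singular compact varieties this is genuine, if standard, work, and the finiteness is essential since an element of a localized module must be killed by a finite product of the generators $T_w-1$. Two minor points: the divisibility of $(\C^*)^r$ is irrelevant, since any nontrivial diagonalizable (or compact abelian) quotient $\T/H$, finite or not, carries a nontrivial character; and the statement in the paper is applied to arbitrary $\T$-varieties, so if you want the full generality of Segal's result rather than the compact algebraic case you would need his Noetherian-support argument rather than a cell count.
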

If $X$ is a compact algebraic variety, then it is enough to localize in the system   generated by $$T_{w}-1\in K_\T(pt)$$ for all nonzero characters $w\in Hom(\T,\C^*)$ appearing in the tangent representations at the fixed points.

The equality below is the $K$-theoretic counterpart of the integration Theorem \ref{loc}:
\begin{equation}\chi(X,E)=\sum_{p\in X^T}\frac 1{[\O_p]}[E]_{|p}\end{equation}
for any $E\in K_\T(X)$. In particular
\begin{equation}\chi_y(X)=\sum_{p\in X^T}\frac 1{[\O_p]}[\Lambda_y
T^*X]_{|p}\,.\end{equation}
 Note that from the numerical point of view there is no need to worry in which groups
(equivariant $K$-theory or cohomology) the computation is done. The passage from cohomology to $K$-theory
is reflected by the choice of a new variables, the Chern characters of the line bundles.

A version of the localization theorem in $K$-theory is discussed
in \cite[Corollaire 6.12]{Gr}. The formula is also valid for singular spaces \cite{Ba}. Baum computes holomorphic Lefschetz number instead of $\chi_y$-genus, but essentially his argument is the same.
The holomorphic Lefschetz number $$\sum_{i=0}^{\dim X}(-1)^i Tr\big(f^*\in End(H^i(X;\O_X))\big)$$ of a finite order automorphism $f:X\to X$  is expressed
by the sum of local contributions coming from the fixed points. The contribution at a fixed point $p$
is obtained from the power series
 $$\sum_{k=0}^\infty Tr\big(f^*\in End(\m_p^k/\m_p^{k+1})\big)s^k\,.$$
 Here $\m_p$ is the maximal ideal at $p$.
 This series turns out to be a rational function with regular value at $s=1$. The specialization gives the local contribution to the global Lefschetz number.
  If the point $p$ is smooth the contribution at $p$ is equal to
$$\prod\frac1{1-\lambda_i},$$ where the product is taken over the eigenvalues of $f$ acting on the
tangent space $T_pX$. This is exactly the shape of the formula (\ref{locconsm}).

\begin{example}\rm  Let $X=\P^1$ with the standard action of $\C^*$. The equivariant ring of a point is equal to the polynomial
algebra in one variable $t$, the generator of $\Hom(\C^*,\C^*)$.
The weight of the tangent representation at 0 is equal to $w_0=t$
and the weight of the representation at infinity is equal to $w_\infty=-t$. We
find that
 \begin{align*} \chi_y(\P^1)&=\frac1{w_0}\frac{w_0}{1-e^{-w_0}}\left(1+y\,e^{-w_0}\right)+
\frac1{w_\infty}\frac{w_\infty}{1-e^{-w_\infty}}\left(1+y\,e^{-w_\infty}\right)\\
&=\frac1{1-e^{-t}}\left(1+y\,e^{-t}\right)+
\frac1{1-e^{t}}\left(1+y\,e^{t}\right)\,.\end{align*} Let us
introduce a
 new variable $e^{-t}=T$. It  can also be considered  as an element of $K_\T(pt)$ and in fact the calculi can be thought to be performed in the localized $K$-theory.
  Now we have
$$\chi_y(\P^1)=\frac{1+y\,T}{1-T}+\frac{1+y\,T^{-1}}{1-T^{-1}}=  1-y$$
Let us remark about the formal setup where the above computations
were done. We have started from the equivariant cohomology of a
point $H^*_\T(pt)$ that is $\Q[t]$ completed in $t$. We are forced to complete since we consider the
infinite power series $td(\pm t)=\frac t{1-e^{\pm t}}$ and Chern character $e^{\pm t}$. Equally
well the computation might have been done in the equivariant $K$-theory of a
point $K_\T(pt)=\Z[T,T^{-1}]$. Formally it is a different object,
but from the computational point of view it makes no difference.
The $K$-theory completed in the dimension ideal $I$ (which is
generated by $T-1$) and tensored with $\Q$ is isomorphic to
$\Q[[t]]$ via the map $T\mapsto e^{-t}$.
\end{example}

The same remarks are
valid for tori of bigger dimensions: $\T=(\C^*)^r$. The Chern character is an isomorphism
$$ch^\T:R(\T)^{^\wedge}_I\simeq K_\T(pt)^{^\wedge}_I\otimes \Q\to H^*_\T(pt)^{^\wedge}_{\frak m}\simeq \Q[[t_1,t_2,\dots,t_r]]\,,$$
where $R(\T)$ is the representation ring of $\T$, the dimension ideal is generated
 by the elements $T_w-1$ for the nontrivial characters $w$.
  (According to our convention $H^*_\T(pt)=\prod_{k=0}^\infty H^k_\T(pt)$
  is already completed in the maximal ideal $\m=H^{>0}_\T(pt)$.)
 The isomorphism $ch^\T$ is the composition of the completion isomorphism
 $K_\T(pt)^{^\wedge}_I\simeq K(BT)$ (see \cite{AS} for a broader context) with nonequivariant Chern
 character $K(BT)\otimes \Q\to H^*(BT)=H^*_\T(pt)$.

\section{Localizing $\chi_y$-genus of singular varieties}
Singular spaces enter into the story due to a theorem of
Yokura and later by Brasselet-Sch\"urmann-Yokura \cite{Yo,BSY} who proved that the
Hirzebruch class $td_y$ admits a generalization to singular
algebraic varieties. For any map from $X\to M$, both possibly singular algebraic varieties, we
associate a homology class with closed supports (Borel-Moore homology)
 $$td_y(f:X\to M)\in H_*^{BM}(M)[y].$$
We are interested mainly in inclusions of singular varieties into
smooth ambient spaces, therefore due to Poincar\'e duality we can
identify the target group with cohomology. The
Brasselet-Sch\"urmann-Yokura class satisfies
\begin{itemize}
\item If $X$ is smooth and the map is proper then $$td_y(f:X\to M)=f_*td_y(X)$$

\item If $X=Z\sqcup U$ with $Z$ closed then $$td_y(f:X\to M)=td_y(f_{|Z}:Z\to M)+td_y(f_{|U}:U\to M)$$
\end{itemize}
The equivariant version can be developed  as well, see the details in \S\ref{eqho}. It is of special interest when the acting group is a torus $\T$. If $M$ is
singular the target group, the equivariant homology, is a bit
nonstandard object and eventually we perform all the
computations assuming that $M$ is smooth. Since the equivariant
cohomology of a point is nontrivial in positive gradations there is a
space for nontrivial invariants of germs of singular varieties. By the Atiyah-Bott or Berline-Vergne
localization theorem we have
$$\chi_y(X)=\int_M td^\T_y(X\hookrightarrow M)=
\sum_{p\in M^\T}\frac1{eu(p)}{td^\T_y(X\hookrightarrow M)}_{|p}\,.$$
 The summand
$\frac1{eu(p)}{td^\T_y(X\hookrightarrow M)}_{|p}$ is considered as the local contribution to $\chi_y$-genus of the point $p\in X^T\subset M^T$.
 If $p\in X^\T$ is a smooth point, then
$$\frac1{eu(p)}{td_y^\T(X\hookrightarrow M)_{|p}}=\prod
\frac{1+y\,e^{-w_i}}{1-e^{-w_i}}\,,$$ where $w_i$ are the weights
of the tangent representation $T_pX$ and Euler class is
computed with respect to weights of $T_pM$. The question remains
what are the contributions of  singular points? These local genera
are analytic invariants of the singularity germs. We would like to
understand what properties of a singular point are reflected by
this invariant. Assume that $M=\C^n$. The localized Hirzebruch class is  of
the form
$$td^\T(\C^n)\cdot
 polynomial\;in\; e^{-w_i}\;and\;y\,.$$
The polynomial in the formula is the equivariant Chern character of
$f_{\bullet*}\Omega^*_{X_\bullet}$, where $f_\bullet:X_\bullet\to X$ is a smooth
hypercovering of $X$. As a matter of a fact according to
\cite[Theorem 2.1]{BSY} the Hirzebruch class factors through
$K$-theory of coherent sheaves on $M$
 $$\begin{matrix}&_{td_y(-)}\\
 K(Var/M)&\longrightarrow& H_*^{BM}(M)[y]\\
 \hfill_{mC_*}\searrow&&\nearrow_{td(M)ch(-)}\\
 &G(M)\otimes \Z[y]\,.
 \end{matrix}$$
 Here $G(M)$ stands for the Grothendieck group of coherent sheaves on $M$.
The same construction is valid for equivariant Hirzebruch class.  The class  $$mC_*(X\hookrightarrow
M)_{|p}=\left[Rf_{\bullet*}\Omega^*_{X_\bullet}\right]_{|p}$$ in
the equivariant Grothendieck group $G_{\T}(pt)=K_\T(pt)$ is the main
protagonist of this paper. Equally well we can talk about its Chern character in equivariant (co)homology.

Our goal  is to clarify the definition, provide
examples and formulate  some statements for special classes of
singularities. Taking the opportunity in \S\ref{divisor} we relate the Aluffi construction of Chern-Schwartz-MacPherson classes \cite{Alog} with the direct definition of the Hirzebruch class for simple normal crossing divisor complements.
In \S\ref{conical} we compute the
Hirzebruch class for the affine cone of a variety contained in
$\P^n$ and give many other examples. The  Schubert varieties are of special interest, but
for now in \S\ref{dod} we will give just a bunch of computations.

 Setting $y=0$ we arrive to the question what is a relation between $mC_0(id_X)=ch(\O_{X_\bullet})$
with $ch(\O_X)$ for a closed variety $X$. How far
$td_y^\T(X\hookrightarrow M)$ is far from $td^\T(M)ch^T(\O_X)$?
One can treat the difference as an analogue of the Milnor class (\cite{Pa})
which is the difference between Chern-Schwartz-MacPherson class
and the expected Fulton-Johnson class \cite{FJ}. The equality
$$td_0^\T(X\hookrightarrow M)=td^\T(M)ch^T(\O_X)$$ holds for a class
of varieties with rational singularities or more general, almost
by the definition, for Du Bois singularities, see \cite[Example 3.2]{BSY}. Then $td_0^\T(id_X)$ agrees with
Baum-Fulton-MacPherson class. In particular we have equality for
\begin{itemize}
  \item normal crossing divisors,
  \item Schubert varieties,
  \item toric singularities,
  \item cone  hypersurfaces in $\C^n$ of degree $d\leq n$.
\end{itemize}
See \S\ref{bfm} for details. In general the difference between $td_y^\T$ and its expected value for complete intersections is  called the  Hirzebruch-Milnor class. Formulas for that class in terms of vanishing cycle sheaves and  in terms of a stratification are given in \cite{MSS}.

\section{Motivation for positivity}

{\it The question of positivity} is much more complicated, although
it seems to hold for a related class of singularities. At the moment we do not state
a reasonable general conjecture. The
positivity depends on numerical relations between higher derived
images of the sheaves $\Omega^p_{X_\bullet}$. There are natural
variables in which $ch^\T(\Omega^*_{X_\bullet})_{|p}$ is often a
polynomial with nonnegative coefficients.   Our choice of
variables depends on the ambient space $M$. The positivity
statement is motivated by an easy fact following for example from
\cite{PW}.
\begin{theorem}\label{posy} Fix a splitting of the torus $\T=(\C^*)^r$.
Let $M=\C^n$ be a representation of $\T$ such that  all weights of
eigenspaces are nonnegative combinations of the basis characters.
Let $X$ be an invariant subvariety of $M$. Then the equivariant
fundamental class $[X]_{|0}\in H^*_\T(pt)$ is a polynomial in
the basis characters with nonnegative coefficients.\end{theorem}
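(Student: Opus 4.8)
The plan is to degenerate $X$ through a flat, $\T$-equivariant family to a union (with multiplicities) of coordinate subspaces of $\C^n$, and then to observe that the equivariant class of a coordinate subspace is a product of weights $\chi_i$ of $M$, each of which is a nonnegative combination of the basis characters by hypothesis. Since $M=\C^n$ is equivariantly contractible, the restriction $H^*_\T(M)\to H^*_\T(pt)$ at the fixed point $0$ is an isomorphism, so $[X]_{|0}$ is just the equivariant fundamental class $[X]\in H^*_\T(M)$. Write $S=\C[x_1,\dots,x_n]$ and $I=I(X)\subset S$; this ideal is homogeneous for the $\Z^r$-grading by $\T$-weights. Fix a term order on $S$ and let $J=\mathrm{in}(I)$ be the associated initial monomial ideal. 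Since Buchberger's algorithm applied to a $\T$-homogeneous generating set of $I$ never leaves the $\T$-homogeneous elements ($S$-polynomials and reductions of $\T$-homogeneous elements are again $\T$-homogeneous), $I$ has a $\T$-homogeneous Gr\"obner basis; homogenizing it against a positive weight vector realizing the term order produces a flat family $\mathcal X\subset\mathbb{A}^1\times M$ with generic fibre $X$, special fibre $\mathrm{Spec}(S/J)$, and total space invariant under $\T$ acting trivially on $\mathbb{A}^1$. Because $\mathbb{A}^1\times M$ equivariantly retracts onto each of its fibres $\{s\}\times M$, the inclusions over $0$ and over $1$ induce one and the same isomorphism $H^*_\T(\mathbb{A}^1\times M)\xrightarrow{\sim}H^*_\T(M)$; by flatness of $\mathcal X$ they carry $[\mathcal X]$ to $[\mathrm{Spec}(S/J)]$ and to $[X]$ respectively, whence $[X]=[\mathrm{Spec}(S/J)]$ in $H^*_\T(M)$.

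I would then analyse the monomial scheme. The minimal primes of $J$ are coordinate primes $P_\sigma=(x_i:i\in\sigma)$ for $\sigma\subset\{1,\dots,n\}$, and those of top dimension satisfy $|\sigma|=\codim X$. Writing $L_\sigma\subset M$ for the corresponding coordinate subspace and $m_\sigma=\mathrm{length}_{P_\sigma}(S/J)>0$ for its multiplicity, additivity of the fundamental class gives $[\mathrm{Spec}(S/J)]=\sum_\sigma m_\sigma[L_\sigma]$. The equivariant normal space of $L_\sigma$ at $0$ is $\bigoplus_{i\in\sigma}\C_{\chi_i}$, where $\chi_i\in\Hom(\T,\C^*)$ is the weight of the $i$-th coordinate of $M$, hence $[L_\sigma]_{|0}=\prod_{i\in\sigma}\chi_i$ and
$$[X]_{|0}=\sum_\sigma m_\sigma\prod_{i\in\sigma}\chi_i\,.$$
By the hypothesis on $M$ each $\chi_i$ is a nonnegative integral combination of the basis characters $t_1,\dots,t_r$, so every product $\prod_{i\in\sigma}\chi_i$ is a polynomial in the $t_j$ with nonnegative coefficients, and since the $m_\sigma$ are nonnegative integers the same is true of $[X]_{|0}$.

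The only delicate point is the first step: that the Gr\"obner degeneration of $X$ can be performed inside the $\T$-equivariant category, i.e.\ that $X$ admits a $\T$-homogeneous Gr\"obner basis whose homogenization defines a $\T$-equivariant flat family over $\mathbb{A}^1$. Equivalently this is the invariance of multidegrees under Gr\"obner degeneration, for which one may invoke the standard treatment in the commutative-algebra literature (Miller and Sturmfels); granting it, the remaining steps are routine bookkeeping.
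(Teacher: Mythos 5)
Your proposal is correct, and it is a fully worked-out version of the one-line alternative the paper itself offers: immediately after the statement the author writes that ``the positivity can also be proved using degeneration of $X$ to an union of invariant linear subspaces,'' but gives no details, and instead refers for the proof to \cite{PW}. The cited argument is global in flavour: one passes to the finite-dimensional approximation $B\T_m=(\P^m)^r$ of the classifying space, observes that the associated bundle of the Borel construction is globally generated precisely because the weights are nonnegative combinations of the basis characters, and expresses the class of $X$ via a generic section and Kleiman transversality as a nonnegative combination of classes of products of linear subspaces --- the same mechanism the paper uses later for the $K$-theoretic analogue (Theorem \ref{posa} in \S\ref{udod}). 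Your route is local and algebraic: since $I(X)$ is multigraded for the $\T$-action, a Gr\"obner degeneration stays inside the equivariant category and flatly deforms the cycle of $X$ to $\sum_\sigma m_\sigma[L_\sigma]$ with $m_\sigma\geq 0$, whence $[X]_{|0}=\sum_\sigma m_\sigma\prod_{i\in\sigma}\chi_i$. This is self-contained modulo the invariance of multidegrees under Gr\"obner degeneration (Miller--Sturmfels, as you say), and it actually yields the slightly sharper conclusion that $[X]_{|0}$ is a nonnegative integral combination of products of the weights of $M$ themselves, not merely a nonnegative polynomial in the basis characters. What the \cite{PW} route buys in exchange is that it transports to settings where no monomial degeneration is available, in particular to the equivariant $K$-theoretic statement for varieties with rational singularities, where one must control signs via Brion's theorem rather than multiplicities of initial schemes. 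The one point you flag as delicate --- that a $\T$-homogeneous ideal admits a $\T$-homogeneous Gr\"obner basis and that the homogenized family is $\T$-invariant --- is indeed the only point needing care, and your justification (Buchberger's algorithm preserves multihomogeneity, and homogenization against the term-order weight does not disturb the $\T$-grading when $\T$ acts trivially on the deformation parameter) is sound.
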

The positivity can also be proved using degeneration of $X$ to an union of invariant linear subspaces.

 A similar statement holds for
$K$-theory, but one has to assume that the variety $X$ has at most
rational singularities.  The positivity in $K$-theory of compact
homogeneous spaces $G/P$ demands introducing signs, as it already
has appeared in \cite[Theorem 1]{Br} for the global nonequivariant
case. If $X$ is a subvariety
 in  a homogeneous space and it has at worst  rational singularities then
$$[\O_X]=\sum_\alpha (-1)^{\dim X-\dim Y_\alpha}c_\alpha [\O_{Y_\alpha}]\,,$$
where the sum is taken over the set of Schubert varieties and the
integers $c_\alpha$ are nonnegative. They are the dimensions
of certain cohomology groups. The proof essentially relays on
Kleiman transversality theorem and the fact that for rational singularities
$Tor^*{\O_{X}}(\O_{Y_1},\O_{Y_1})$
 is concentrated in one gradation. A similar formula holds in
equivariant $K$-theory, \cite[Theorem 4.2]{AGM}. One can get rid
of the sign alternation by multiplying  the fundamental classes by
$(-1)^{\codim X}$. We will formulate the positivity condition in
equivariant cohomology. Let us introduce variables
$$S_i=T_i-1=e^{-t_i}-1\in H_\T^*(pt)\,,$$ for $t_i\in H^2_\T(pt)$.
These variables have a
geometric origin: when we fix a geometric approximation
$B\T_m=(\P^m)^r$, then $S_i$ restricts to $-ch(\O_{H_i})$, where
$H_i=(\P^m)^{i-1}\times\P^{m-1}\times(\P^m)^{r-i}$ is the
coordinate hyperplane in $B\T_m=(\P^m)^r$. The $K$-theoretic
analogue of the Theorem \ref{posy} is the following statement:

\begin{theorem} With the notation and  assumptions of Theorem \ref{posy} suppose that $X$ has rational singularities.
Then the cohomology class $(-1)^{\codim X}ch^\T([\O_X])_{|0}\in
H^*_\T(pt)$ is a polynomial in $S_1,S_2,\dots S_r$
with nonnegative coefficients.
\end{theorem}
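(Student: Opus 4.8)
The plan is to split the statement into an elementary polynomiality part, which uses only the hypothesis on the weights of $M$, and a substantial positivity part, which is the only place where the assumption of rational singularities is needed.

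First I would make the restriction to the fixed point explicit by means of a resolution. Write $S=\C[x_1,\dots,x_n]$ for the coordinate ring of $M=\C^n$, graded by $\Hom(\T,\C^*)$. Since the $\T$-eigenvalues on $M$ are nonnegative combinations of the basis characters $t_1,\dots,t_r$, every coordinate function $x_k$ has weight a \emph{nonpositive} combination of the $t_i$, and therefore so does every monomial, every element of the $\T$-stable ideal $I_X$, and every higher syzygy. Choosing a $\T$-equivariant finite free resolution $F_\bullet\to\O_X$ over $S$, say $F_i=\bigoplus_j S\otimes\C_{\mu_{ij}}$, one obtains the $K$-polynomial
$$[\O_X]_{|0}=\sum_i(-1)^i\sum_j[\C_{\mu_{ij}}]\in K_\T(pt),$$
with every $\mu_{ij}$ a nonpositive combination of the $t_i$. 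Using $e^{-t_i}=1+S_i$, the Chern character of such a character is a monomial $\prod_i(1+S_i)^{a_i}$ with $a_i\ge 0$, i.e.\ a polynomial in $S_1,\dots,S_r$ with nonnegative coefficients and constant term $1$. Hence $ch^\T([\O_X])_{|0}$ is automatically a polynomial in the $S_i$; before rationality is invoked its coefficients alternate in sign, and the assertion of the theorem is that multiplication by $(-1)^{\codim X}$ makes them nonnegative.

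Second, I would reduce the positivity of the coefficients to a signed-positive expansion of the class $[\O_X]$ itself, of the form
$$[\O_X]=\sum_\alpha(-1)^{\codim X-\codim L_\alpha}\,a_\alpha\,[\O_{L_\alpha}]\qquad\text{in }K_\T(\C^n),$$
where the $L_\alpha$ run over the $\T$-invariant coordinate subspaces of $M$ and the coefficients $a_\alpha\in K_\T(pt)$ are effective up to the indicated sign, meaning that each $ch^\T(a_\alpha)$ is a polynomial in the $S_i$ with nonnegative coefficients. Granting this, restriction to $0$ is $K_\T(pt)$-linear, so
$$(-1)^{\codim X}ch^\T([\O_X])_{|0}=\sum_\alpha ch^\T(a_\alpha)\cdot(-1)^{\codim L_\alpha}ch^\T([\O_{L_\alpha}])_{|0},$$
and the Koszul complex on the coordinates vanishing on $L_\alpha$ gives $(-1)^{\codim L_\alpha}ch^\T([\O_{L_\alpha}])_{|0}=\prod_{k:\,x_k|_{L_\alpha}=0}\big(ch^\T([\C\cdot x_k])-1\big)$. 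Each factor $ch^\T([\C\cdot x_k])-1$ is a polynomial in the $S_i$ with nonnegative coefficients and vanishing constant term, because the weight of $x_k$ is a nonpositive combination of the $t_i$; so each summand on the right is a product of nonnegative-coefficient polynomials, and the whole sum is of the required form.

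The main obstacle is the signed-positive expansion, and this is where the rationality of the singularities enters, along the lines of the results recalled above for homogeneous spaces (\cite{Br}, \cite{AGM}) and the degeneration behind Theorem \ref{posy}. The cleanest route is to compactify $M$ $\T$-equivariantly to $\P^n=\P(\C\oplus M)$, which is a generalized flag variety $G/P$ with $\T$ mapping to a maximal torus and whose $\T$-fixed Schubert varieties form a coordinate flag whose traces on the big cell $M$ are precisely the invariant coordinate subspaces $L_\alpha$; then expand $[\O_{\overline X}]$ in the Schubert structure-sheaf basis by the equivariant positivity theorem \cite[Theorem 4.2]{AGM} and restrict to $M$. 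The essential use of "$X$ has rational singularities" is via Kleiman transversality together with the $Tor$-concentration property of rational singularities: a generic translate of the relevant opposite coordinate subspace meets $X$ in general position, and since a transverse intersection of a variety with rational singularities with a smooth subvariety again has rational singularities, the higher $Tor$ (equivalently, the higher sheaf cohomology) controlling the coefficients vanish, so the $a_\alpha$ reduce to characters of $H^0$-type spaces and come out effective up to the predicted sign $(-1)^{\codim X-\codim L_\alpha}$. I expect the genuinely delicate points to be (i) arranging that the chosen compactification also has rational singularities at infinity — which can be handled by choosing a smooth projective (toric) compactification adapted to $X$, or by a preliminary equivariant degeneration, available since all weights of $M$ are nonnegative and hence $\C^n$ contracts to the origin — and (ii) keeping track of the equivariant twists through the transversality argument. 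Finally I would note that rationality is unavoidable: Theorem \ref{posy} already yields nonnegativity of the lowest-degree component $[X]_{|0}$ of $(-1)^{\codim X}ch^\T([\O_X])_{|0}$, but the higher components form a correction of Milnor-class type whose sign may fail for non-rational, even Du Bois, singularities.
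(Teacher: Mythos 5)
Your overall architecture (polynomiality is elementary; positivity should come from a Brion-type signed expansion, using Kleiman transversality and the $Tor$-concentration property of rational singularities) correctly identifies the ingredients, and your final Koszul computation of $(-1)^{\codim L_\alpha}ch^\T([\O_{L_\alpha}])_{|0}$ is fine. But the step that carries all the weight --- the existence of the signed-positive expansion of $[\O_X]$ with coefficients $a_\alpha\in K_\T(pt)$ effective in the $S_i$ --- is not established by the route you propose, and this is a genuine gap. First, $\P(\C\oplus M)\cong\P^n$ as a $GL_{n+1}$-homogeneous space has only $n+1$ Schubert classes (a single flag of linear subspaces), so their traces on the big cell are not ``precisely the invariant coordinate subspaces $L_\alpha$''. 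More seriously, \cite[Theorem 4.2]{AGM} is a positivity statement for the structure constants of products of Schubert classes with respect to the \emph{full} maximal torus; it does not give an expansion of the class of an arbitrary invariant subvariety of a \emph{subtorus} $\T\subset T_{GL_{n+1}}$ with coefficients positive in the $S_i$ attached to $\T$. The mechanism that would produce such an expansion for arbitrary subvarieties with rational singularities is Brion's generic-translate argument, and translating by a general element of $GL_{n+1}$ destroys exactly the $\T$-equivariance you need to keep. So your reduction trades the theorem for a statement that is at least as hard and is not proved by the cited results.

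The paper resolves this tension differently, and the device is worth noting: instead of compactifying $M$ equivariantly, it passes to the finite-dimensional Borel construction. Because all weights of $M$ are nonnegative combinations of the $t_i$, the associated bundle $E\T_m\times_\T\C^n$ over $B\T_m=(\P^m)^r$ is globally generated; a generic section $s$ pulls $E\T_m\times_\T X$ back to a subvariety of $(\P^m)^r$ of expected dimension which still has rational singularities. This converts the equivariant problem into a \emph{non-equivariant} one on the flag variety $(\P^m)^r$, where Brion's theorem \cite[Theorem 1]{Br} applies verbatim to give
$[\O_{s^{-1}(E\T_m\times_\T X)}]=\sum_I(-1)^{\codim X-\codim X_I}c_I[\O_{X_I}]$ with $c_I\geq 0$, the $X_I$ being products of coordinate linear subspaces; since $[\O_{X_I}]=(-1)^{\codim X_I}S_I$ in the chosen variables, the claim follows. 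In short: the generic translate happens on the mixing space, not on a compactification of $M$, which is how equivariance and Kleiman transversality are reconciled. Your delicate point (i) about singularities at infinity is thereby avoided entirely, and your first paragraph (the free-resolution argument for polynomiality) becomes unnecessary, since the expansion already exhibits the class as a polynomial in the $S_i$.
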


For a proof see \S\ref{udod}, Theorem \ref{posa}.
 We have noticed that if $X$ has mild
singularities then a similar property holds for full Hirzebruch
class. To start we examine normal crossing divisors and their
complements. One easily verifies that positivity holds in
variables $S_i$ and $d=-1-y$. We expect that the positivity is
preserved when $X\subset M$ has sufficiently good resolution, but
now we cannot formulate and prove a general result.
Apparently  we need a stronger condition than just rationality. Here we wish to give some examples
\begin{itemize}
 \item smooth
quadratic cone hypersurfaces in $\C^n$,
 \item simplicial
 toric singularities (Theorem \ref{torsimp}),
 \item du Val surface singularities (we omit explicit computations),
 \item some examples of Schubert cells.
\end{itemize}
  The equivariant
 approach has an advantage that we can separate global  phenomena
 from local and study only the local properties of the
 singularity. In addition, the results of computations are just
 polynomials, not some abstract classes in a huge unknown object.

The reader can find useful to look at the article \cite{We2},
where an elementary and self-contained introduction to equivariant characteristic classes   is given.

\section{Recollection of the Hirzebruch class}
The $\chi_y$-genus of a smooth and compact complex variety $X$ is
a formal combination of the Euler characteristics of the sheaves
of differential forms:
$$\chi_y(X)=\sum_{p=0}^{\dim X}\chi(X;\Omega^p_X)y^p=\sum_{p,q=0}^{\dim X} (-1)^q h^{p,q}(X)y^p\in\Z[y]\,,$$
see \cite
{Hi}. To shorten the notation we will write formally
$$\Omega^y_X=\Lambda_yT^*X=\sum_{y=0}^{\dim X}\Omega^p_X\,y^p\,.$$
By the Hirzebruch-Riemann-Roch theorem the $\chi_y$-genus is equal
to the integral
$$\int_X td(X)ch(\Omega_X^y)\,,$$
where $td(X)\in H^*(X)$ is the Todd class and $ch(-)$ is the Chern
character. (We always consider cohomology with rational
coefficients.) The characteristic class  $td(X)ch(\Omega_X^y)\in
H^*(X)[y]$ is called the Hirzebruch class and denoted by
$td_y(X)$. It is a multiplicative characteristic class associated
to the formal power series
$$td_y(x)=\frac{x(1+ye^{-x})}{1-e^{-x}}=x+(1+y)\left(\frac{x}{1-e^{-x}}-x\right)=x+(1+y)\left(td(x)-x\right)
 $$

$$=x+(1+y)\left(1 - \frac  x2 +  \frac {x^2}{12} -  \frac{x^4}{720} +  \frac{x^6}{30240} - \frac{
x^8}{1209600} + \frac{ x^{10}}{47900160}+\dots\right)$$

The normalized Hirzebruch (see \cite{Yo}) class is obtained from the power series
$$\widetilde{td}_y(x)=\frac{x(1+ye^{-(1+y)x})}{1-e^{-(1+y)x}}=\frac 1{1+y} td_y((1+y)x)=$$
$$=(1+x) - \frac { (1+y)x}2 +  \frac {(1+y)^2x^2}{12} -  \frac{(1+y)^4x^4}{720}
 +  \frac{(1+y)^6x^6}{30240} - \frac{
(1+y)^8x^8}{1209600} + 
\dots\,.$$
 For any vector bundle $E$ of rank $n$ over a $n$-dimensional variety we have  $$\int_M\widetilde{td}_y(E)=\int_M{td}_y(E)$$
 therefore
$$\int_X \widetilde{td}_y(TX)=\chi_y(X)\,.$$
In addition for $y=-1$ we recover the total Chern class of the
tangent bundle
$$\widetilde{td}_{-1}(TX)=c(TX)\,.$$
We will use unmodified Hirzebruch class having in mind, that
normalization is a matter of re-scaling homogeneous components of $td_y$ by a power of $(1+y)$. On the other hand according to  Thom the normalized power series are in bijection with multiplicative characteristic classes.
In fact for us it is irrelevant which version of $td_y$ we use. In all the computations for a cohomology class $t\in H^2(B\T)$ we use a variable $T=e^{-t}$ for unmodified version of the Hirzebruch genus. For the normalized version we would use $T=e^{-(y+1)t}$ and our formulas would remain not effected.

The Hirzebruch class was generalized for singular varieties in
\cite{BSY}. In fact it is well defined on the Grothendieck group
$K(Var/M)$ of varieties equipped with a map to a fixed variety
$M$. This means that for any map of possibly singular varieties
$f:X\to M$ the class $td_y(f:X\to M)$ is defined and
$$td_y(f:X\to M)=td_y(f_{|Y}:Y\to M)+td_y(f_{|U}:U\to M)$$
for any closed subvariety $Y\subset X$ and $U=X\setminus Y$. The
Hirzebruch class takes values in Borel-Moore homology
$H^{BM}_*(M)[y]$ (homology with closed supports) or in the Chow
group of $M$.  We obtain a transformation of functors
$$td_y:K(Var/-)\to H^{BM}_*(-)$$ defined on the category of
complex algebraic varieties with proper maps.  This means that if $\phi:M_1\to M_2$ is a proper map, then the following diagram is commutative:
\begin{equation}\begin{matrix}&_{td_y}\\
K(Var/M_1)&\longrightarrow& H^{BM}_*(M_1)\\
^{\phi_\circ}\downarrow^{\phantom{\phi_\circ}}&&\phantom{^{f_*}}\downarrow{^{\phi_*}}\\
K(Var/M_2)&\longrightarrow& H^{BM}_*(M_2)\,.\\
&^{td_y}\end{matrix}\label{funkt}\end{equation}
Here the map $\phi_\circ$ is just the composition sending $f:X\to M_1$ to $\phi\circ f: X\to M_2$.
If $X$ is smooth
and the map $f$ is proper, then
$$td_y(f:X\to
M)=f_*(td_y(X))=f_*(td(X)ch(\Omega^y_X))\,.$$
 If in addition $M$ is
smooth then $$td_y(f:X\to M) =td(M)ch(Rf_*\Omega^y_X)\,.$$ Here we
freely use the Poincar\'e duality isomorphism $H^{BM}_*(M)\simeq
H^{2\dim M-*}(M)$. To compute the Hirzebruch class $td_y(f:X\to
M)$ for a singular $X$ and possibly non proper map one has to
replace $X$ by its {\it geometric resolution} $X_\bullet$, for
example obtained from a cubification in the sense of Guillen and
Navarro Aznar \cite{GNA}. The maps between the members of
$X_\bullet$ are not important, we just have to write $[f:X\to M]\in
K(Var/M)$ as an alternating sum of classes $\sum_{i=0}^{\dim X}
(-1)^i[f_i:X_i\to M]$ with $X_i$ smooth and with the map $f_i:X_i\to M$ being
proper. Then
$$td_y(f:X\to
M)=\sum_{i=0}^{\dim X}(-1)^i td_y(f_i:X_i\to M)\,.$$
 It is convenient to denote the class in $K$-theory as
   $$\Omega^y_{X_\bullet}=\sum_{i=0}^{\dim
 X}(-1)^i\Omega^y_{X_i}\in \bigoplus_{i=0}^{\dim X}K(X_i)[y]$$
and
$$td_y(f:X\to
M)=f_{\bullet*}(td(X_\bullet)\,ch(\Omega^y_{X_\bullet}))\,.$$

\section{Hirzebruch class of a SNC
 divisor complement}\label{divisor}
 The case when $X$ is the complement of a simple normal crossing
 divisor $D\subset M$ is of particular interest, and it is worth to
 give an explicit formula in terms of logarithmic forms. A different formula connecting Hirzebruch class with the sheaf of logarithmic forms was given in  \cite[\S2]{MaSc}.

 Let
 $$D=\bigcup_{k=1}^m D_k\subset M$$
 be the decomposition of $D$ into smooth components. For a subset
 $I\subset\{1,2,\dots,m\}$ let $$D_I=\bigcap_{i\in I} D_i\,.$$
 By inclusion-exclusion formula the $K$-theoretic class $mC_*(X\hookrightarrow M)\in K(M)[y]$ is equal to
 $$\iota_{\bullet*}\Omega^y_{X_\bullet}=\sum_{I\subset\{1,2,\dots,m\}}(-1)^{|I|}\iota_{I*}\Omega^y_{D_I}\in K(M)\,.$$
 Here $\iota_I:D_I\to M$ denotes the inclusion.
 We will find another expression for
 $\iota_{\bullet*}\Omega^y_{X_\bullet}$.
 Let
 $$L\Omega^p_I=\iota_{I*}\Omega^p_{D_I}(\log  D_{>I})$$
 be the sheaf of differential forms on $D_I$ with logarithmic poles
 along the divisor $D_{>I}=\bigcup_{J>I}D_J$ introduced in \cite{De}. We write
 $$L\Omega^y_I=\sum_{p=0}^{\dim D_I}L\Omega^p_I \,y^p\,.$$

\begin{theorem}\label{t1} Let $y=-1-\delta$. With the notation introduced for a simple normal crossing divisor complement
$X=M\setminus D$ we have
\begin{equation}\label{mojmsc}\iota_{\bullet*}\Omega^y_{X_\bullet}=\sum_{I\subset\{1,2,\dots,m\}} \delta^{|I|}\,
L\Omega_I^y\in K(M)[\delta]\,.\end{equation}  The Hirzebruch class of $X$ is equal to
\begin{equation}\label{mojmsch}td_y(X\hookrightarrow M)=\sum_{I\subset\{1,2,\dots,m\}}\delta^{|I|}\,td(M)\,
ch(L\Omega_I^y)\,.\end{equation}
\end{theorem}

\begin{remark}\rm Another way of expressing $\Omega^y_{X_\bullet}$ in terms of sheaves of logarithmic forms was given in \cite[Prop.2.2]{MaSc}:
\begin{equation}\label{msc}\Omega^y_{X_\bullet}=\O_M(-D)\otimes \Omega^y_M(\log D)\,\end{equation}
and
\begin{equation}\label{msch} td_y(X\hookrightarrow M)=td(M)\,ch\big(\O_M(-D)\otimes \Omega^y_M(\log D)\big)\,.\end{equation}
The formulas (\ref{msc}) and (\ref{mojmsc}) are equivalent as we explain below. Moreover the formula (\ref{msch}) can be generalized is the setup of mixed Hodge modules,  see \cite[Proposition 5.2.1]{MSS}.
Let's compare two expressions (\ref{msc}) and (\ref{mojmsc}) for $\Omega^y_{X_\bullet}$, say under assumption that $D$ consists of one component. 
We will skip $\iota$ in the notation:
$$\Omega^y_{X_\bullet}=\Omega^y_M(\log D)+\delta \Omega^y_D=\O_M(-D)\otimes \Omega^y_M(\log D)\,.$$
Since in $K$-theory $\O_M(-D)=\O_M-\O_D$ we have
$$\Omega^y_M(\log D)+\delta \Omega^y_D=(\O_M-\O_D)\otimes\Omega^y_M(\log D)\,.$$
It follows
$$\delta \Omega^y_D=-\O_D\otimes\Omega^y_M(\log D)\,.$$
In general in $K$-theory there is an equality
$$\O_M(-D)=\sum_{I\subset\{1,2,\dots,m\}}(-1)^{|I|}\O_{D_I}\,.$$
and one can show (for example inductively from the equation of the formulas (\ref{msc}) and (\ref{mojmsc})) that
$$(-\delta)^{|I|}L\Omega^y_I=\O_{D_I}\otimes \Omega^y_M(\log D)\,.$$
\end{remark}

We recall that the weight filtration and the whole mixed Hodge
structure for an open smooth variety  were constructed via the
logarithmic complex by Deligne \cite{De}. Theorem \ref{t1} or \cite[Prop 2.2]{MaSc} clarifies the relation of
the Hirzebruch class with the mixed Hodge structure. The general
point of view was presented in \cite[\S4]{BSY}, but the considered case
is fairly explicit.  More general approach reletting mixed Hodge modules and theory of characteristic classes is described in the survey \cite{Scu}.
\s
\noindent{\it Proof of Theorem \ref{t1}.} We will skip $\iota$ in the notation.  The sheaf of logarithmic differential forms
is equipped with the weight filtration.
 The associated quotients of the weight
 filtration in $L\Omega^p_\emptyset=\Omega^p_M(\log D)$ are equal to the
 sheaves of forms on the intersections of divisor components (with shifted gradation),
 therefore we have
 \begin{equation}L\Omega^y_\emptyset=\sum_{I\subset\{1,2,\dots,m\}}y^{|I|}\,\Omega^y_{D_I}\, \,.\label{log2no}\end{equation}
 Similarly
 \begin{equation}L\Omega^y_J=\sum_{I\supset J}y^{|I|-|J|}\,\Omega^y_{D_I}\, \,.\label{log2no2}\end{equation}
From the equations (\ref{log2no}) and (\ref{log2no2}) we find that
  \begin{equation}\Omega^y_J=\sum_{I\supset J}(- y)^{|I|-|J|}\,L\Omega^y_{D_I}\,\,.\end{equation}
By the inclusion-exclusion formula we have
 $$\Omega^y_{X_\bullet}=\sum_{J\subset\{1,2,\dots,m\}}(-1)^{|J|}\Omega^y_{D_J}=
 \sum_{J\subset\{1,2,\dots,m\}}(-1)^{|J|}\sum_{I\supset J}(- y)^{|I|-|J|}\,L\Omega^y_{D_I}\,=$$
$$ =\sum_{I\subset\{1,2,\dots,m\}}\left(\sum_{J\subset I}(-1)^{|J|}\,(- y)^{|I|-|J|}\right) L\Omega^y_{D_I}
=$$ $$=\sum_{I\subset\{1,2,\dots,m\}}\left(
\sum_{k=0}^{|I|}\binom{|I|}k(-1)^{k}\,(-y)^{|I|-k} \right)
L\Omega^y_{D_I} = \sum_{I\subset\{1,2,\dots,m\}}(-1-y)^{|I|}
L\Omega^y_{D_I}\,.$$
\hfill\qed

\begin{example}\label{log1dim}\rm If $D\subset M$ is a smooth divisor
then we have the residue exact sequences
$$0\to\Omega^*_M\hookrightarrow \Omega^*_M(\log D)\stackrel{res}\to \Omega^{*-1}_D\to 0\,.$$
Therefore in $K$-theory we have
\begin{equation}L\Omega^y_{\emptyset}=\Omega^y_M(\log D)=\Omega^y_M+y\Omega^y_D\,.\label{klog}\end{equation}
The decomposition of
the Theorem \ref{t1} takes form of the sum of two components, logarithmic and residual part:
$$\Omega^y_{X_\bullet}=
L\Omega_\emptyset^y+ \delta L\Omega_{\{1\}}^y\,,$$
Indeed
$$L\Omega_\emptyset^y\oplus \delta L\Omega_{\{1\}}^y=\Omega^y_M+y\Omega^y_D-(1+y)\Omega^y_D=\Omega^y_M-\Omega^y_D\,.$$
\end{example}
Let us now derive from formula (\ref{msch}) the Aluffi expression $
c^{SM}(1\!\!1_X)$ via logarithmic tangent bundle.
\begin{corollary} The  formula (\ref{msch}) specializes to the Aluffi formula
\cite{Alog}  for Chern-Schwartz-MacPherson class
$$c^{SMC}(1\!\!1_X)=c(TM(-\log D))
$$
by taking $y=-1$, that is $\delta=0$. \end{corollary}

\begin{proof}  Suppose $x_i$ for $i=1,2,\dots,\dim(M)$ are the Chern roots of $TM$ and $\xi_i$ for $i=1,2,\dots,\dim(M)$ are the Chern roots of $TM(-\log D)$. By (\ref{msch})
$$\widetilde{td}_y(X\hookrightarrow M) =\prod_{i=1}^{\dim(M)}\frac{x_i}{1-e^{-\delta x_i}}\cdot e^{-\delta [D]}\cdot \prod_{i=1}^{\dim(M)}(1-(1+d)e^{\delta \xi_i})\,.$$
Let us compute what is the limit
of this characteristic class
with $\delta\to 0$. We   check that
$$\lim_{\delta\to 0}e^{-\delta [D]}=1\,,\qquad\lim_{\delta\to 0}\frac x{1-e^{\delta x}}(1-(1+\delta)\,e^{\delta \xi})=1+\xi\,.$$
Therefore
$$\lim_{y\to -1}\widetilde{td}_{y}(M)=\prod_{i=1}^{\dim(M)}(1+\xi_i)=c(TM(-\log D))\,.$$
\end{proof}

\begin{remark}\rm
We can easily derive the Aluffi formula from (\ref{mojmsch}) and see which summands contribute to the Chern-Schwartz-MacPherson class. Indeed
\begin{equation}\widetilde{td}_y(X\hookrightarrow M)=\sum_{I\subset\{1,2,\dots,m\}}\delta^{|I|}\,\widetilde{td}(M)\,
{ch}(\Omega^y_{D_I}(\log
D_{>I}))\,,\label{alu}\end{equation} and
${ch}(\Omega^y_M(\log D))$ is the product of the
factors $1+y\,e^{-(y+1) \xi_i}=1-(1+\delta)e^{\delta \xi_i}$.
It follows that
$$\lim_{\delta\to 0}\widetilde{td}(M)\,{ch}(\Lambda_yT^*M(-\log D))=c(TM(-\log D))
$$ The remaining
summands in the formula (\ref{alu}) vanish in the limit since they converge to
$$\lim_{\delta\to 0}\delta^{|I|}\widetilde{td}(D_I)\,{ch}(\Omega^y{D_I}(\log
D_{>I}))= 0^{|I|}c(TD_I(-\log D_{>I}))=0$$ for $|I|>0$.
\end{remark}
\s
Let us examine the specialization $y=0$.
\begin{corollary}  With the introduced notation for a simple normal crossing divisor complement
$X=M\setminus D$ we have
$$td_0(X\hookrightarrow M)=td(M)\,ch(\O_M(-D))=td(M)e^{-D}\,.$$
\end{corollary}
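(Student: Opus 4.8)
The plan is to read the value of the Hirzebruch class off Theorem \ref{t1} at $y=0$, i.e.\ at $\delta=-1-y=-1$, and then to identify the resulting alternating sum of structure sheaves with the class of $\O_M(-D)$ in $G_0(M)$.

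First I would observe that the sheaf of logarithmic $0$-forms is just the structure sheaf: $L\Omega^0_I=\Omega^0_{D_I}(\log D_{>I})=\O_{D_I}$, since logarithmic poles only affect forms of positive degree. Hence at $y=0$ the polynomial $L\Omega^y_I$ reduces to its constant term $\O_{D_I}$, and substituting $y=0$, $\delta=-1$ into Theorem \ref{t1} gives
$$\Omega^0_{X_\bullet}=\sum_{I\subset\{1,\dots,m\}}(-1)^{|I|}\,[\O_{D_I}]\in G_0(M)\,,$$
$$td_0(X\hookrightarrow M)=td(M)\,ch\Bigl(\sum_{I\subset\{1,\dots,m\}}(-1)^{|I|}\,[\O_{D_I}]\Bigr)\,.$$

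Next I would evaluate the $K$-class $\sum_{I}(-1)^{|I|}[\O_{D_I}]$. Because $D$ is a simple normal crossing divisor, the \v{C}ech-type complex of coherent sheaves on $M$
$$0\to\O_D\to\bigoplus_{|I|=1}\O_{D_I}\to\bigoplus_{|I|=2}\O_{D_I}\to\cdots\to\bigoplus_{|I|=m}\O_{D_I}\to 0$$
is exact (locally this is the standard inclusion--exclusion resolution for a union of transversally meeting smooth hypersurfaces). Taking classes in $G_0(M)$ gives $[\O_D]=-\sum_{\emptyset\neq I}(-1)^{|I|}[\O_{D_I}]$, and combining with the exact sequence $0\to\O_M(-D)\to\O_M\to\O_D\to 0$ we obtain
$$\sum_{I\subset\{1,\dots,m\}}(-1)^{|I|}[\O_{D_I}]=[\O_M]-[\O_D]=[\O_M(-D)]\in G_0(M)\,.$$

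Finally, using the factorization of the Hirzebruch transformation through $G_0(M)$ recalled in \cite[Theorem 2.1]{BSY}, this yields $td_0(X\hookrightarrow M)=td(M)\,ch([\O_M(-D)])$; since $\O_M(-D)$ is a line bundle with first Chern class $-[D]$, its Chern character equals $e^{-D}$, which is the asserted formula. I expect the only genuinely delicate point to be the exactness of the \v{C}ech complex above --- equivalently the vanishing of the $Tor$-sheaves that also gives $[\O_{D_I}]=\prod_{i\in I}[\O_{D_i}]$ in $K(M)$ and hence $\sum_I(-1)^{|I|}[\O_{D_I}]=\prod_{k=1}^m(1-[\O_{D_k}])=\prod_{k=1}^m[\O_M(-D_k)]=[\O_M(-D)]$; this is where the normal crossing hypothesis enters, while the rest is formal manipulation in $K$-theory together with the standard behaviour of the Chern character on line bundles.
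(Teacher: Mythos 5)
Your proposal is correct and follows essentially the same route as the paper: specialize Theorem \ref{t1} at $y=0$ (so $\delta=-1$), identify $\sum_I(-1)^{|I|}[\O_{D_I}]$ with $[\O_M]-[\O_D]$, and conclude via the ideal-sheaf sequence $0\to\O_M(-D)\to\O_M\to\O_D\to 0$. The only difference is that you make explicit (via the \v{C}ech-type resolution of $\O_D$, equivalently the $Tor$-vanishing for transverse smooth hypersurfaces) the inclusion--exclusion identity that the paper's proof uses without comment.
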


\begin{proof} If $y=0$, then $\delta=-1$ and
$$\Omega^y_{X_\bullet}=\sum_{I\subset\{1,2,\dots,m\}} \delta^{|I|}\,
L\Omega_I^y=\sum_{I\subset\{1,2,\dots,m\}}(-1)^{|I|}\O_{D_I}=\O_M-\O_D$$
and from the exact sequence  $$0\to \O_M(-D)\to \O_M\to \O_D\to
0$$ we obtain the result.\end{proof}

Note that we have
$$td_0(D\hookrightarrow M)=td(M)(1-ch(\O_M(-D)))=td(M)ch(\O_D)\,,$$ which agrees
with the image of the Baum-Fulton-MacPherson class of $D$ in
$H_*(M)$, see \cite[18.3.5]{Fu}. This is not always the case. Only
the varieties with mild singularities have this property, see
\S\ref{bfm}. We will show various explicit examples after having
introduced the local Hirzebruch class for varieties with torus action.

\section{Localization of equivariant homology}
Our goal is to study singularities locally, but the characteristic
classes are global objects. Assume that an algebraic group is acting on an
algebraic variety  and a singular point is fixed. Then when we
localize the characteristic class at this point, we obtain some
nontrivial information. There is a technical inconvenience: the
Hirzebruch class is defined in the homology of the target space
$M$. If $M$ is smooth, then homology can be replaced by
cohomology, and its equivariant version is well developed and
widely known. If $M$ is singular, the Hirzebruch class naturally lives in
equivariant homology. This theory is less developed  but it is present in
the literature (\cite{BL,GKM,BZ}). Equally well one can work with
equivariant Chow groups \cite{EdGr}. Another definition of equivariant homology can be found in \cite[\S3.3]{AFP}.  We will briefly recall the
general theory not assuming that $M$ is smooth. We will
concentrate on the case when a torus is acting.

The definition of equivariant homology by approximation given in \cite{BZ} or in \cite{EdGr} is the
most convenient for us. We fix a decomposition of the torus
$\T=(\C^*)^r$. The universal $\T$-bundle $E\T\to B\T$ is
approximated by the sequence of finite dimensional algebraic
varieties
$$E\T_m=(\C^{m+1}\setminus\{0\})^r\to B\T_m=(\P^m)^r\,.$$
The equivariant homology of a $\T$-variety $M$ is defined by
$$H_{\T,k}(M)=\mathop{\lim_{\longleftarrow}}_{m}H^{BM}_{k+2rm}(E\T_m\times_\T M)\,.$$
The limit is taken with respect to the Gysin maps
$$H^{BM}_{k+2rm}(E\T_m\times_\T M)\to H^{BM}_{k+2r(m-1)}(E\T_{m-1}\times_\T M)\,,$$
and for a fixed gradation it stabilizes.  The equivariant homology
may be nonzero in gradations $\leq 2\,\dim M$, also in negative
gradations. The equivariant homology is isomorphic to the equivariant
cohomology with coefficient in the dualizing sheaf $\D_M$, which
is an equivariant sheaf in the sense of \cite{BL}. Equivariant
homology is a module over the equivariant cohomology of the point
$$H^*_\T(pt)=H^*(B\T)=Sym^*(\Hom(\T,\C^*)\otimes \Q)$$
via the pullback $H^*(B\T)\to H^*(B\T_m)\to H^*(E\T_m\times_\T M)$
composed with the cap product. If $M$ is smooth, then
$\D_M=\C_M[2\,\dim M]$ and
$$\cap[M]:H^{2\,\dim M-k}_\T(M)\to H_{\T,k}(M)$$
is an isomorphism.  The localization theorem as stated in
\cite[Theorem 6.2]{GKM} says that up to a $H^*_\T(pt)$-torsion all the
information about $H_{\T,k}(M)$ is encoded at the fixed points:
the restriction map
$$H_{\T,*}(M)= H^{-*}_\T(M;\D_M)\to H^{-*}_\T(M^\T ;(\D_M)_{|M^\T })\,,$$
has torsion kernel and cokernel. The image is of special interest
when the fixed points are isolated. Nevertheless for singular
variety the groups
$H^{-*}_\T(\{p\};(\D_M)_{|p})=H_{\T,*}(M,M\setminus \{p\})$ might
be complicated.

\begin{example}\rm Suppose that $\T=\C^*$ and $p\in M$ is an isolated
fixed point. Let $U$ be a conical  neighbourhood of $p$, which is
invariant with respect to $S^1\subset \C^*$. Then
$$H^{-*}_\T(\{p\};(\D_M)_{|p})=H_{\T,*}(\overline{U},\partial
U)\,.
$$ The exact sequence of
the pair $(\overline{U},\partial U)$ gives us some information
about the cohomology of the point with coefficients in $\D_M$. In this exact sequence we have
$H_{\T,*}(\overline
{U})=H^{-*}_\T(pt)$ and $H_{\T,*}(\partial U)=H_*(\partial U/S^1)$,
since the action of $S^1$ on $\partial U$ has finite isotropy
groups. For
$k\leq \dim M$ we have {$$
 0\to H_{\T,2k+1}(\overline{U},\partial U)\to
  H_{2k}(\partial U/S^1)\to
  \Q\to
 H_{\T,2k}(\overline{U},\partial U)\to
 H_{2k-1}(\partial U/S^1)\to
 0\,.
 $$}

\end{example}

In general if the fixed points are isolated, then the difference
between $H^*_\T(\{p\})$ and $H^{*}_\T(\{p\};(\D_M)_{|p})$   is
measured by  $H_{\T,*}(\partial U;(\D_M)_{|\partial U})$. This is a torsion
$H^*_\T(pt)$-module since there are no fixed points of $\T$ on
$\partial U$. Finally,
the localization theorem
\cite[Theorem 6.2]{GKM} for isolated fixed points and the dualizing sheaf $\D_M$
takes form:
\begin{theorem}[Localization in equivariant homology] For any $\T$-variety with isolated fixed points
$${\mathcal S}^{-1}H_{\T,*}(M)\simeq \bigoplus_{p\in M^T} {\mathcal S}^{-1}H^{-*}_\T(p)\,,$$
where ${\mathcal S}$ is the multiplicative system generated by nonzero
characters $$w\in H^2_\T(pt)=Hom(\T,\C^*)\otimes \Q\,.$$
\end{theorem}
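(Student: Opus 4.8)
The plan is simply to assemble the two ingredients that have been prepared in the paragraphs above. The first is the localization theorem \cite[Theorem 6.2]{GKM}, applied to $M$ with coefficients in the dualizing complex $\D_M$: since $H_{\T,*}(M)=H^{-*}_\T(M;\D_M)$, that theorem says that the restriction map
$$H_{\T,*}(M)\longrightarrow H^{-*}_\T(M^\T;(\D_M)_{|M^\T})=\bigoplus_{p\in M^\T}H^{-*}_\T(\{p\};(\D_M)_{|p})$$
has kernel and cokernel that are ${\mathcal S}$-torsion $H^*_\T(pt)$-modules, hence becomes an isomorphism after applying ${\mathcal S}^{-1}(-)$; the splitting of the target as a direct sum over fixed points uses that $M^\T$ is discrete. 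The second ingredient is the evaluation of each local summand ${\mathcal S}^{-1}H^{-*}_\T(\{p\};(\D_M)_{|p})$.

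For the latter I would run the long exact sequence of the pair $(\overline U,\partial U)$ written down in the example above, where $U$ is a conical $\T$-invariant neighbourhood of the isolated fixed point $p$ and $H^{-*}_\T(\{p\};(\D_M)_{|p})=H_{\T,*}(\overline U,\partial U)$. Its terms come in three families: the relative group $H_{\T,*}(\overline U,\partial U)$, which is what we want; the group $H_{\T,*}(\overline U)$, which equals $H^{-*}_\T(pt)=H^{-*}_\T(p)$ because $\overline U$ is a $\T$-equivariantly contractible cone; and the boundary group $H_{\T,*}(\partial U)$ (with the coefficients induced from $\D_M$), which is an ${\mathcal S}$-torsion module because $\T$ acts on $\partial U$ without fixed points. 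Applying the exact functor ${\mathcal S}^{-1}(-)$ annihilates both boundary terms, so ${\mathcal S}^{-1}H^{-*}_\T(\{p\};(\D_M)_{|p})\simeq{\mathcal S}^{-1}H^{-*}_\T(p)$ for every $p$. Plugging these isomorphisms into the localized restriction isomorphism of the first step yields
$${\mathcal S}^{-1}H_{\T,*}(M)\simeq\bigoplus_{p\in M^\T}{\mathcal S}^{-1}H^{-*}_\T(p)\,,$$
which is the assertion.

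The step I expect to require the most care is the torsion statement ${\mathcal S}^{-1}H_{\T,*}(\partial U;(\D_M)_{|p})=0$, namely the fact that equivariant (co)homology with coefficients in an arbitrary $\T$-equivariant constructible complex on a $\T$-space carrying no $\T$-fixed point is ${\mathcal S}$-torsion. This is where the localization theorem must be invoked in its sheaf-theoretic strength rather than merely for constant coefficients, and it is precisely here that, for a singular germ at $p$, one sees that $H^{-*}_\T(\{p\};(\D_M)_{|p})$ really is bigger than $H^{-*}_\T(p)$ before localization. Beyond this, a little bookkeeping is needed to check that $\D_M$ restricts as expected to the conical boundary $\partial U$ and that the degree conventions in the pair sequence match the definition of $H_{\T,*}$.
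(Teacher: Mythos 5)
Your proposal is correct and follows essentially the same route the paper takes: it combines the sheaf-theoretic localization theorem of Goresky--Kottwitz--MacPherson applied to $\D_M$ with the exact sequence of the pair $(\overline U,\partial U)$, using that $H_{\T,*}(\partial U)$ is ${\mathcal S}$-torsion because $\partial U$ has no fixed points, to identify each localized local term with ${\mathcal S}^{-1}H^{-*}_\T(p)$. The paper presents this argument informally in the example and paragraph preceding the theorem statement rather than as a displayed proof, but the content is the same.
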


The situation simplifies when $M$ is smooth. One can explicitly write down  the inverse map of the restriction $H_\T^*(M)\to H_\T^*(M^\T)$. Then
$H_{\T,*}(M,M\setminus \{p\})$ is a free $H^*_\T(pt)$ module with
one generator in the gradation $2\,\dim M$. Let us identify
$H_{\T,*}(M,M\setminus \{p\})$ with $H^*_\T(pt)$ by Poincar\'e
duality. In addition, when $M$ is smooth and complete then
$H_{\T,*}(M)=H^{2\dim M-*}_\T(M)$ is a free module over
$H^*_\T(pt)$ (\cite[Theorem 14.1]{GKM}), hence the restriction to
the fixed points is injective. Atiyah-Bott formula \cite[Formula 3.8]{AB}, \cite{BV} (or
\cite{EdGr} for Chow groups) is a recipe how to recover the class
from its restriction to the fixed points:
 \begin{theorem}[Localization Formula]\label{locfo} Let $M$ be a smooth and complete algebraic
 variety. Assume that $M^\T$ is discrete and let $i_p:\{p\}\to M$
 be the inclusion. Then for any equivariant class $a\in H^*_\T(M)$
 $$a=\sum_{p\in M^\T}i_{p*}\frac{i^*_p(a)}{eu(p)}\,,$$
where $eu(p)\in H^{2\dim M}_T(\{p\})$ is the product of  weights
appearing in the tangent representation $T_pM$.\end{theorem}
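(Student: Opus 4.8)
The statement is the classical Atiyah–Bott–Berline–Vergne formula, so my plan is to reduce it to the injectivity of the restriction map and a computation of self-intersection classes. First I would observe that since $M$ is smooth and complete with isolated fixed points, $H^*_\T(M)$ is a free module over $H^*_\T(pt)$ (cited from \cite[Theorem 14.1]{GKM} in the excerpt), and hence the restriction map $\rho\colon H^*_\T(M)\to H^*_\T(M^\T)=\bigoplus_{p\in M^\T}H^*_\T(pt)$ is injective. Therefore it suffices to check the claimed identity after applying $\rho$: both sides are elements of $H^*_\T(M)$, and two such elements agree iff their restrictions to every fixed point agree.

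The key computation is then the restriction of the Gysin-pushed classes $i_{p*}(c)$ to an arbitrary fixed point $q$. The relevant input is the self-intersection formula: for the closed embedding $i_p\colon\{p\}\hookrightarrow M$ of a smooth fixed point, $i_p^*i_{p*}(c)=c\cdot e_\T(N_{p}M)=c\cdot e_\T(T_pM)=c\cdot eu(p)$, where $eu(p)$ is exactly the product of the tangent weights with multiplicity as in the statement. For $q\neq p$ one has $i_q^*i_{p*}(c)=0$, since the supports are disjoint and the pushforward of a class supported at $p$ restricts to zero away from $p$ (formally, $i_q^* i_{p*}=0$ because the fiber product of the two embeddings is empty). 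Granting these two facts, restricting the right-hand side $\sum_{p}i_{p*}\big(i_p^*(a)/eu(p)\big)$ to $q$ yields only the $p=q$ term, namely $eu(q)\cdot\big(i_q^*(a)/eu(q)\big)=i_q^*(a)$, which is precisely $\rho(a)$ at $q$. Hence $\rho$ of the right-hand side equals $\rho(a)$, and injectivity finishes the proof.

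Two points need care, and the self-intersection step is the main obstacle. First, the division by $eu(p)$ only makes sense after localizing in the multiplicative system ${\mathcal S}$ generated by the nonzero characters: $eu(p)$ is a product of such characters (the tangent weights at an isolated fixed point are all nonzero, as $p$ is isolated), hence invertible in ${\mathcal S}^{-1}H^*_\T(pt)$, and one should read the whole identity in ${\mathcal S}^{-1}H^*_\T(M)$; since $H^*_\T(M)$ is free and thus embeds in its localization, the conclusion descends to the stated (non-localized) equality whenever $a$ itself lies in the image, which it does by construction. Second, the self-intersection identity $i_p^*i_{p*}(c)=c\cdot e_\T(T_pM)$ must be justified equivariantly: one models it on the finite-dimensional approximations $E\T_m\times_\T M$, where $\{p\}$ becomes $E\T_m\times_\T\{p\}=B\T_m$ embedded in $E\T_m\times_\T M$ with normal bundle the associated bundle of the representation $T_pM$, whose equivariant Euler class is the product of the weights; the non-equivariant self-intersection formula then applies on each approximation and passes to the limit. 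Everything else is formal manipulation with Gysin maps and the projection formula.
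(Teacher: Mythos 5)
The paper does not prove this theorem at all: it is quoted from the literature (Atiyah--Bott, Berline--Vergne, and Edidin--Graham for Chow groups), so there is no internal argument to compare against. Your proposal is the standard and correct proof of the ABBV formula: injectivity of the restriction $H^*_\T(M)\to H^*_\T(M^\T)$ (from freeness over $H^*_\T(pt)$, which the paper itself cites from \cite[Theorem 14.1]{GKM}, combined with the localization theorem), the equivariant self-intersection formula $i_p^*i_{p*}(c)=c\cdot eu(p)$, and the vanishing $i_q^*i_{p*}=0$ for $q\neq p$; your handling of the localization needed to divide by $eu(p)$ and of the passage through the finite-dimensional approximations $E\T_m\times_\T M$ is also correct. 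Nothing is missing.
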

The resulting {\it Integration Formula}  for isolated fixed points takes form
\begin{equation}\int_Ma=\sum_{p\in M^\T}\frac {i^*_p(a)}{eu(p)}\label{int}\end{equation}

 \begin{remark}\rm Some
generalizations for singular spaces are available, but additional
assumption about the fixed point set are needed \cite[Proposition
6]{EdGr}.\end{remark}

\section{Equivariant version of Hirzebruch class} \label{eqho}

Similarly to \cite{BZ,Oh} we propose  a definition of the equivariant Hirzebruch class of an
 equivariant map $X\to M$. It is based on the observation that the varieties $B\T_m=(\P^{m})^r$ which approximates $B\T$ and $E\T_m\times_\T X$ approximating Borel construction
have their own  Hirzebruch classes.

\begin{definition}[Of the equivariant Hirzebruch class]
$$td_y^\T (X\to M)=\mathop{\lim_{\longleftarrow}}_{m}\left(td_y(E\T_m\times_\T X\to E\T_m\times_\T M)\cap
(\pi^*td_y(B\T_m))^{-1}\right)\,,$$
where  $\pi:E\T_m\times_\T M \to B\T_m$ is the canonical
 projection. Note that since we pass to the limit with finite dimensional skeleta of $B\T$,  we obtain a class which might be nontrivial in infinitely many gradations:
 $$td_y^\T (X\to M)\in\prod_{k=-\infty}^{2\dim M}\Big(H_{\T,k}(M)\otimes \Q[y]\Big)\,.$$
 We omitting completion in the notation and write $H_{\T,*}(X)$ for this group.
\end{definition}
The limit in the definition stabilizes and for a bounded range of gradations. It is
enough to perform all computations for a sufficiently large $m$. If $M$ is smooth then  $$H_{\T,2\dim M-*}(X)\simeq H^*_\T(M)=\prod_{k=0}^{\infty}H_{\T}^k(M)$$ by Poincar\'e duality.
If $X=M$ then the Hirzebruch class $td_y(X\to M)\in  H^*_\T(M)[y]$ can be computed as the characteristic class of the equivariant tangent bundle. Note that only finitely many powers of $y$ appear in the expression for that characteristic class. For arbitrary singular $X$ the class $td_y^\T (X\to M)$ is a combination of finitely many classes $td_y^\T (X_i\to M)$ for smooth $X_i$, therefore also in that case only finitely many powers of $y$ appear. This means that
 $$td_y^\T (X\to M)\in \left(\prod_{k=-\infty}^{2\dim M}H_{\T,k}(M)\right)\otimes \Q[y]\,.$$
If $M=pt$ we do not obtain any additional information. The action of $\T$ is not reflected by $\chi_y$-genus:

\begin{theorem}[Rigidity for singular varieties] \label{rigidity} The equivariant Hirzebruch class $td_y^\T (X\to
pt)\in H_{\T,*}(pt)[y]$  is trivial in nonzero gradations and
$$td_y^\T (X\to pt)=\chi_y(X)\in H_{\T,0}(pt)[y]=\Q[y]\,.$$
\end{theorem}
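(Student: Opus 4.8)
The plan is to reduce to the case of smooth complete $X$ by an equivariant hyperresolution, to unwind the definition of $td_y^\T(X\to pt)$ along the fibre bundle $E\T_m\times_\T X\to B\T_m$, and to conclude by rigidity of the $\chi_y$-genus. First note that for $M=pt$ the projection occurring in the definition is the identity of $B\T_m$, so $td_y^\T(X\to pt)$ is the inverse limit over $m$ of the classes $td_y(E\T_m\times_\T X\to B\T_m)\cap td_y(B\T_m)^{-1}$. This assignment is additive: for $X=Z\sqcup U$ with $Z$ closed the Borel construction gives $E\T_m\times_\T X=(E\T_m\times_\T Z)\sqcup(E\T_m\times_\T U)$ with $E\T_m\times_\T Z$ closed, and additivity of the non-equivariant $td_y$ together with linearity of $\cap\,td_y(B\T_m)^{-1}$ and of the (essentially finite) inverse limit gives $td_y^\T(X\to pt)=td_y^\T(Z\to pt)+td_y^\T(U\to pt)$. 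Hence $td_y^\T(-\to pt)$ descends to the equivariant Grothendieck group of $\T$-varieties, and, writing $[X]=\sum_i(-1)^i[X_i]$ with all $X_i$ smooth and complete by an equivariant cubical hyperresolution (equivariant resolution of singularities is functorial, so it is available), it is enough to prove the theorem for smooth complete $X$; non-equivariant additivity of the $\chi_y$-genus then recombines the contributions into $\chi_y(X)$.

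So let $X$ be smooth and complete, set $E=E\T_m\times_\T X$, and let $\pi\colon E\to B\T_m$ be the bundle projection. The total space $E$ is smooth (because $\T$ acts freely on $E\T_m$) and compact, and $\pi$ is proper, so $td_y(\pi\colon E\to B\T_m)=\pi_*\,td_y(E)$. The relative tangent sequence $0\to T_{E/B\T_m}\to TE\to\pi^*TB\T_m\to0$ identifies $T_{E/B\T_m}$ with $E\T_m\times_\T TX$, the Borel construction of the equivariant tangent bundle of $X$. Since $td_y$ factors through $K$-theory it is multiplicative on short exact sequences, so $td_y(E)=td_y(E\T_m\times_\T TX)\cdot\pi^*td_y(B\T_m)$, and the projection formula gives $\pi_*td_y(E)=\pi_*\bigl(td_y(E\T_m\times_\T TX)\bigr)\cdot td_y(B\T_m)$. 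Capping with $td_y(B\T_m)^{-1}$ cancels the base factor, so in the limit
$$td_y^\T(X\to pt)=\mathop{\lim_{\longleftarrow}}_m\pi_*\,td_y(E\T_m\times_\T TX)=\int_X td_y^\T(X)\in H^*_\T(pt)[y],$$
the equivariant Gysin push-forward to a point of the equivariant Hirzebruch class $td_y^\T(X)\in H^*_\T(X)[y]$; call this class $\chi_y^\T(X)$.

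It remains to identify $\chi_y^\T(X)$, and here I would invoke rigidity directly. Applying the Integration Formula (\ref{int}) to $a=td_y^\T(X)$ gives $\chi_y^\T(X)=\sum_{p\in X^\T}\frac1{eu(p)}td_y^\T(X)_{|p}$ in the localized ring; by Theorem \ref{loc} (and its extension to non-isolated fixed loci, \cite{Mu}) this sum lies in $H^*_\T(pt)[y]$, is concentrated in cohomological degree $0$, and there equals the ordinary $\chi_y(X)$. Under the reindexing $H_{\T,*}(pt)\cong H^*_\T(pt)$ this is precisely the statement that $td_y^\T(X\to pt)$ is trivial in nonzero gradation and equals $\chi_y(X)\in H_{\T,0}(pt)[y]=\Q[y]$, which together with the reduction of the first paragraph completes the proof.

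The step I expect to be the main obstacle is the bookkeeping in the smooth case: one must check that multiplicativity of $td_y$ along the relative tangent sequence and the projection formula are genuinely compatible with the inverse limit defining the equivariant class, and that fibrewise integration $\pi_*$ at finite level $m$ really computes the equivariant Gysin map $\int_X\colon H^*_\T(X)\to H^*_\T(pt)$. The reduction step also quietly uses the existence of an equivariant hyperresolution, while the concluding identification is the place where genuine rigidity, rather than formal manipulation, enters, and is supplied by \cite{Mu}.
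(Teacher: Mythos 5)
Your proof is correct, but it takes a genuinely different and much heavier route than the paper. The paper's own argument is essentially one line: since $E\T_m\to B\T_m$ is Zariski locally trivial, so is the associated bundle $E\T_m\times_\T X\to B\T_m$, hence $[E\T_m\times_\T X\to B\T_m]=[B\T_m\times X\to B\T_m]$ in $K(Var/B\T_m)$ \emph{already for singular $X$}; the product property of the Brasselet--Sch\"urmann--Yokura transformation then gives $td_y(E\T_m\times_\T X\to B\T_m)=\chi_y(X)\,td_y(B\T_m)$, and capping with $td_y(B\T_m)^{-1}$ finishes the proof. This avoids both of the nontrivial external inputs you rely on: the existence of an equivariant hyperresolution into smooth complete pieces compatible with the Borel construction (needed for your reduction), and the genuine rigidity theorem of \cite{Mu} for smooth compact $\T$-manifolds (needed to identify $\int_X td_y^\T(X)$ with $\chi_y(X)$ in degree zero). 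In other words, the paper derives rigidity for singular varieties \emph{formally} from the motivic structure of $td_y$, with no analytic rigidity input, whereas you prove it by reducing to, and then citing, classical rigidity --- which is exactly what the paper means by ``we derive rigidity \dots from the motivic properties.'' Your route does have the side benefit of exhibiting $td_y^\T(X\to pt)$ as the equivariant Gysin pushforward $\int_X td_y^\T(X)$ for smooth complete $X$, but for the theorem as stated the Zariski-local-triviality observation makes all of that machinery unnecessary. If you keep your approach, the two steps deserving the most care are the existence and Borel-compatibility of the equivariant decomposition $[X]=\sum_i(-1)^i[X_i]$ with $X_i$ smooth and complete, and the version of rigidity for non-isolated fixed loci.
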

\begin{remark}\rm This property for smooth varieties
 was studied in \cite{Mu, To}. The series $td_y(x)\in \Q[[x]]$ gives rise to an universal rigid characteristic class. To prove that $td_y(x)$ is universal suppose  a formal series $f(x)=\sum_{n=0}^\infty a_nx^n$ defines a rigid characteristic class, i.e. the equivariant $f$-genus of a smooth variety $X$ vanishes in positive gradations. Rigidity implies some relations between coefficients of $f$. It is easy to check, that setting $X=\P^1$ or $\P^2$ with the standard torus actions the conditions imposed on  $f$ leave only three degrees of freedom: $f$ is determined by the coefficients $a_0$, $a_1$ and $a_2$. It follows that  up to a constant or re-scaling $x$ the series $f(x)$ has to be equal $td_{y_0}(x)$ for some $y_0\in \Q$.\end{remark}
We  derive rigidity for singular algebraic varieties from the motivic properties of the Hirzebruch class.

\s\noindent{\it Proof of Theorem \ref{rigidity}.} The canonical projection $E\T_m\times_\T X\to E\T_m\times_\T pt=B\T_m$
is a locally trivial fibration in Zariski topology therefore in $K(Var/B\T_m)$ the classes
$[E\T_m\times_\T X\to B\T_m]$ and $[B\T_m\times X\to B\T_m]$ are
equal. By the product property of the Hirzebruch class we have
$$td_y(E\T_m\times_\T X\to B\T_m)=\chi_y(X)td_y(B\T_m)\,.$$
The conclusion follows.\hfill\qed

We note that if  $X$ admits a decomposition into affine spaces as
often happens for $\T$-varieties, then $td_y(X\to pt)$ is easy.
Namely:
$$td^\T_y(X\to pt)=\chi_y(X)=\sum_{i=0}^{\dim X} a_i(-y)^i\,,$$
where $a_i$ is the number of $i$ dimensional cells.
Similarly when
$X$ is a toric variety, then
$$td_y^\T(X\to pt)=\chi_y(X)=\sum_{i=0}^{\dim X} b_i(-(y+1))^i\,,$$
where $b_i$ is the number of $i$ dimensional orbits. On the other
hand the local contribution coming from a singular point to the
global class might be fairly complicated.

Suppose, that $M$ is smooth and complete. Assume, that  $\T$ has
only a finite number of fixed points on $M$. Then according to
Atiyah-Bott or Berline-Vergne formula (\ref{int})
\begin{equation}
\chi_y(X) =\sum_{p\in M^\T}\frac{td^\T_y(f:X\to M)_{|p}}{eu(p)}=
\sum_{p\in M^\T}\frac{ch^\T(mC_*(f:X\to M))_{|p}}{ch^\T(\O_{\{p\}})}
\,.\label{hirint}\end{equation}  As it was explained in the introduction, instead of computations in
cohomology we can apply localization theorem for equivariant K-theory,
\cite{Se}. This alternative point of view does not influence the computations, which are done
in the ring of Laurent series.

 Let us give an
example which shows how the local Hirzebruch class at a singular point can be computed by global Localization Theorem \ref{loc}, provided that at the remaining fixed points the variety are smooth. In certain cases this method can be applied in much more general situations.
It was applied in \cite{We} to compute Chern-MacPherson-Schwartz classes of determinantal variety.

\begin{example}\rm \label{stozLGe} Consider the torus $\T =(\C^*)^2 $ acting on $\P^3$ by the formula
$$( \xi_1 ,\xi_2 )\cdot [x_0\, : \,x_1\, :\, x_2\, :\, x_3]=[x_0\, :\,\xi^2_1 x_1\, :\, \xi^2_2 x_2\, :\, \xi_1\xi_2x_3 ]\,.$$
The action preserves the projective cone over the quadric
$$X = \{[x_0\, :\, x_1\, :\, x_2\, :\, x_3]\subset \P^3\,
|\, x_1x_2 - x_3^2 =0\}\, .$$ Since $X$ has a decomposition into
affine cells it is immediate to compute the $\chi_y$-genus
$$\chi_y(X)=1 - y + y^2\,.$$
The variety $X$ has three fixed points: $[0:1:0:0]$ and
$[0:0:1:0]$ are smooth and $[1:0:0:0]$ is singular (the $A_1$
singularity). Let us denote the characters $\T \to \C^*$
$$t_1 :(\xi_1,\xi_2) \mapsto \xi_1,\qquad t_2 :(\xi_1,\xi_2) \mapsto \xi_2\,.$$
In the standard affine neighbourhood of the point $[0:1:0:0]$ there are the
coordinates $\frac{x_0}{ x_1}$, $\frac{x_2} {x_1}$,
$\frac{x_3}{x_1}$. The tangent representation of $\T$ at $[0:1:0:0]$ has the characters
$-2t_1\, ,\, 2(t_2 - t_1)\, ,\,t_2 - t_1$. The weight at the point $[0:0:1:0]$ differ by a switch
of subscripts.
The equation of $X$ is $\frac{x_2} {x_1}+
\left(\frac{x_3} {x_1}\right)^2=0$. The linear part has weight
$2(t_2 -t_1)$, therefore $[X]_{|[0:1:0:0]} =2(t_2 -t_1)$.

\vskip 20pt

\hskip19pt $[0:1:0:0]$

\hskip19pt tangent weights:

\hskip19pt $-2t_1\,,~t_2-t_1$

\hskip19pt normal weight:

\hskip19pt $2(t_2-t_1)$ \vskip -30pt

 \hskip270pt $[0:0:1:0]$

\hskip270pt tangent weights:

\hskip270pt $-2t_2\,,~t_1-t_2$

\hskip270pt normal weight:

\hskip270pt $2(t_1-t_2)$

\begin{center}\vskip-100pt\pgfdeclareimage[height=7cm]{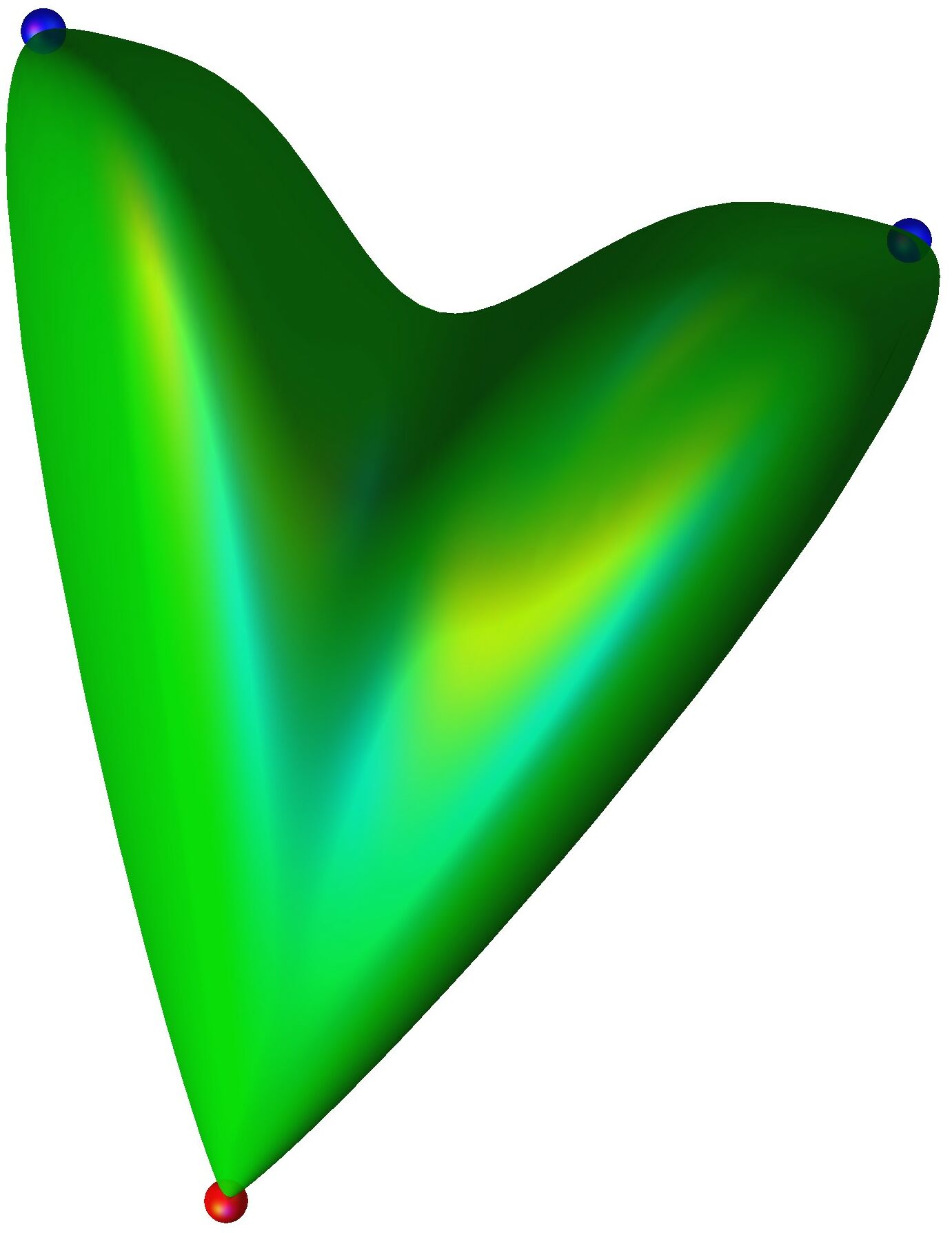}{stkwa}
\pgfuseimage{stkwa}\end{center}  \hfil${[1:0:0:0]}$ -- singular point
\s

\noindent The
Hirzebruch class at the point $[0:1:0:0]$ is equal to
$$td^\T_y(X)_{|[0:1:0:0]}=\frac{-2t_1(1+ye^{2t_1})}{1-e^{2t_1}}\cdot \frac{(t_2-t_1)(1+ye^{t_1-t_2})}{1-e^{t_1-t_2}}\,.$$
The image under the inclusion into $M = \P^3$ is equal
$$td^\T_y(X\to M)_{|[0:1:0:0]}=2(t_2-t_1)\frac{-2t_1(1+ye^{2t_1})}{1-e^{2t_1}}\cdot \frac{(t_2-t_1)(1+ye^{t_1-t_2})}{1-e^{t_1-t_2}}\,.$$
Similarly we compute $td^\T_ y (X)_{|[0:0:1:0]}$.
The variables
$t_i$ cancel out in the formula (\ref{hirint}), it remains only
$T_i = e^{-t_i}$. After this  substitution we have
{\begin{align*}&y^2-y+1=\\ &=\frac{\left(1+y\frac{1}{{T_1
   }^2}\right)
   \left(1+y\frac{{T_2}
   }{{T_1}}\right)}{\left(1-\frac{1}{{T_1}^2}\right)
   \left(1-\frac{{T_2}}{{T_1}}\right)}-\frac{\left(1+y\frac{1}{{T_2}^2}\right) \left(1+y\frac{{T_1}
   }{{T_2}}\right)}{\left(1-\frac{1}{{T_2}^2}\right)
   \left(1-\frac{{T_1}}{{T_2}}\right)}
   +\frac{ch^\T(mC_*(f:X\to M))_{|[0:0:0:1]}}
   {(1-T_1^2)(1-T_2^2)(1-T_1T_2)}\,.\end{align*}}
Simplifying the expression we find the formula for
$ch^\T(mC_*(f:X\to M))_{|[0:0:0:1]}$:
\begin{equation}\label{stozLG}(1-{T_1}^2 {T_2}^2)
  +y
   ({T_1}+{T_2})^2(1-{T_1} {T_2})+
  y^2{T_1} {T_2}
   (1-{T_1}^2 {T_2}^2)\end{equation}
In particular here for $y=0$ $$ch^\T(mC_0(f:X\to M))=ch^\T(1-\O(-X))=ch^\T(\O_X).$$
\end{example}

\section{Local Hirzebruch class for SNC divisor}

We want to describe $td_y^\T(X\hookrightarrow M)$ of a singularity
germ of a subvariety $X\subset M$. Let us concentrate on the case
when $M$ is smooth and $X$ is an open subset. To compute this
local invariant directly we resolve singularities, that is we find a
proper map $f:\widetilde M\to M$ such that $X\simeq \widetilde
X=f^{-1}(X)$ and $\widetilde M\setminus \widetilde X$ is a simple
normal crossing divisor. Then $td_y^\T(X\hookrightarrow
M)=f_*(td_y^\T(\widetilde X\hookrightarrow\widetilde M))$.
Therefore it is crucial to understand the situation when
$D=M\setminus X$ already is SNC divisor.

The local Hirzebruch class is invariant with respect to  analytic
changes of coordinates, so we can assume that we have coordinates
preserved by the torus action. The divisor $D$ which is a union of coordinate hyperplanes defined by the equation
$\prod_{i=1}^kx_i=0$. The  weights of  $\T$ acting on coordinates
will be denoted by $w_1,w_2,\dots, w_n$. It is immediate to  write
down the Hirzebruch classes of the basic constructible sets:

\begin{proposition} Let $D=\{0\}\subset \C=M$, $X=\C\setminus D$, and let $\T=\C^*$ acts on $\C$ with the weight $w$. Set $\delta=-1-y$, $S=e^{-iw}$. Then
\begin{equation}td^\T_y(D\hookrightarrow \C)_{|0}=w\label{punkt}\end{equation}
\begin{equation}\label{L}
td^\T_y(\C)_{|0}
=w\frac{1+y\,e^{-w}}{1-e^{-w}}=w\frac{\delta+S+\delta S}{S}\end{equation}
\begin{equation}td^\T_y(X\hookrightarrow \C)_{|0}
=w\frac{(1+y)\,e^{-w}}{1-e^{-w}}=w\frac{\delta(1+ S)}{S}\label{snc1}\end{equation}
\begin{equation}td^\T(\C)ch^\T(L\Omega^y_\emptyset)_{|0}
=w\frac{1+y}{1-e^{-w}}=w\frac{\delta}{S}\label{snclog}\end{equation}
\end{proposition}

\begin{proof} The formula (\ref{punkt}) holds since $w\in H^2(B\T)=Hom(\T,\C)$ is the Euler class of the normal bundle of $D$.
 The formula (\ref{L}) is by the straight forward substitution.
To prove (\ref{snc1}) we use additivity.
$$td^\T_y(X\hookrightarrow \C)_{|0}=td^\T_y(\C)_{|0}-td^\T_y(D\hookrightarrow \C)_{|0}=w\left(\frac{\delta+S+\delta S}{S}-1\right)=
w\frac{\delta(1+ S)}{S}\,.$$
 The formula (\ref{snclog}) follows from (\ref{klog})
 \begin{align*}td^\T(\C)ch^\T(L\Omega^y_\emptyset)_{|0}&=td^\T_y(\C)_{|0}+y\,td^\T_y(D\hookrightarrow \C)_{|0}
 \\&
=w\left(\frac{\delta+S+\delta S}{S}-(\delta+1)\right)=w\frac{\delta}{S}\end{align*}
\end{proof}

By the product property of the Hirzebruch class we obtain
\begin{corollary}\label{divisor2} Let $D=\{(x_1,x_2,\dots, x_n)\in\C^n\,|\,\prod_{i=1}^kx_i=0\}$ be a simple normal crossing divisor and $X=M\setminus D$.
Set $S_i=e^{-w_i}-1$ and $\delta=-1-y$, where $w_i$ is the weight
of $\T$ acting on $i$-th coordinate. For arbitrary
$n$ and $k$ we have
\begin{align}
td^\T_y(\C^n)_{|0}
&=eu(0)\prod_{i=1}^n\frac{\delta+S_i+\delta S_i}{S_i}\\
td^\T_y(X\hookrightarrow \C^n)_{|0}
&=eu(0)\delta^k\prod_{i=1}^k\frac{1+ S_i}{S_i}\prod_{j=k+1}^n\frac{\delta+S_j+\delta S_j}{S_j}\\
td^\T(\C^n)ch^\T(L\Omega^y_\emptyset)_{|0}
&=eu(0)\delta^k\prod_{i=1}^k\frac{1}{S_i}\prod_{j=k+1}^n\frac{\delta+S_j+\delta S_j}{S_j}\end{align}
\end{corollary}
\noindent Multiplying by $\frac1{eu(0)}\prod_{i=1}^nS_i=(-1)^ntd^\T(\C^n)^{-1}$ we obtain
an expression for $$(-1)^n ch^\T mC_*(X\hookrightarrow M)_{|0}$$ which
is a polynomial with nonnegative coefficients in  $\delta$ and
$S_i$.
We will examine various examples and we will see that the
positivity in the $S_i$ and $\delta$ variables is preserved for a
large class of singularities.

\section{Whitney umbrella: an example of computation via resolution}

\begin{example}\rm  Consider the torus $\T=(\C^*)^2$ acting on $\C^3$ with weights
$$w_1=t_1+t_2\,,\,w_2=t_1\,,\,w_3=2t_2\,.$$
The weights are non\-ne\-ga\-tive combinations of the
weights $t_i$.
It follows that for any invariant subvariety its fundamental class
in equivariant cohomology is a non\-ne\-ga\-tive combination of the
monomials in $t_i$ (Theorem \ref{posy}). We will observe a similar effect for the
Hirzebruch class of the Whitney umbrella
 $$X=\{(x_1,x_2,x_3)\in \C^3 \,|\,x_1^2-x_2^2x_3=0\}\,.$$
 Let $$Z=\{(x,y,z)\in \C^3 \,|\, x=0,y=0\}\quad \text{and}\quad X^o=X\setminus
 Z\,.$$
and let
$$f:\widetilde{X}=\C^2\to \C^3\,,\quad f(u,v)=(uv,u,v^2)\,,$$
be the resolution of the Whitney umbrella,  $\widetilde
X^o=f^{-1}X^o=\{u\not=0\}$. We have
\begin{align*}td_y^\T(X\to\C^3)&
=td_y^\T(X^o\to\C^3)+td_y^\T(Z\to\C^3)\\
&=f_*td_y^\T(X^o\to\C^2)+td_y^\T(Z\to\C^3)\\
&=2(t_1+t_2)\frac{t_1(1+y)\,e^{-t_1}}{1-e^{-t_1}}
\frac{t_2(1+y\,e^{-t_2})}{1-e^{-t_2}}+
(t_1+t_2)t_1\frac{2t_2(1+y\,e^{-2t_2})}{1-e^{-2t_2}}
\,.\end{align*} In the variables $T_i=e^{-t_i}$ {$$
\frac{td_y^\T(X\to\C^3)}{2t_1t_2(t_1+t_2)}=\frac{1 + T_1 T_2 +
y(T_1 + 2 T_1 T_2 + T_2^2)  +y^2 (T_1 T_2  +
   T_1 T_2^2) }{(1-T_1)(1-T_2^2)}$$}
   and in the variables $S_i$
{  \begin{align*}&\frac{S_1 S_2 (2 + S_2)  +
 \delta\cdot (S_1 + 2 S_2 + 4 S_1 S_2 + S_2^2 + 2 S_1 S_2^2)+ \delta^2 (1 + S_1) (1 + S_2) (2 + S_2)}{S_1(S_2^2+S_2)}\,.\end{align*}}
For the complement of the Whitney umbrella we obtain
$$\frac{\delta (1 + S_1) (1 + S_2)\left(S_1 S_2 (2 + S_2)+\delta S_2 \,(1 + 3 S_1 + S_2 + 2 S_1 S_2)+\delta^2 (1 + S_1) (1 + S_2)^2\right)}{(S_1S_2+S_1+S_2)S_1(S_2^2+S_2)}$$
The expressions have nonnegative coefficients and multiplying by the factor
 $${(S_1S_2+S_1+S_2)S_1(S_2^2+S_2)}=(-1)^3td^\T(\C^3)^{-1}$$ we obtain the formula for
$mC_*(X\hookrightarrow\C^3)$ with globally predicted signs $(-1)^3$.

There is another issue which is the same as in the normal crossing
case: for $y=0$ the Todd class of the Whitney umbrella is equal to
$$
td^\T( \C^3)\cdot(1-e^{2(t_1+t_2)})=td^\T( \C^3)\cdot
ch^\T(\O_X)\,.$$
\end{example}

\section{Conical singularities}\label{conical}
First let us recall the basic calculational properties of Todd
class. Stably the tangent bundle $T\P^{n-1}$  is equal to
$\O(1)^{\oplus n}$ therefore $td(\P^{n-1})=\left(\frac
h{1-e^{-h}}\right)^n$, where $h=\O(1)$. We have
$$1=\int_{\P^{n-1}}td(T\P^{n-1})=\text{coefficient of $h^{n-1}$ in
 }\frac{h^n}{(1-e^{-h})^n}=Res_{h=0}\frac1{(1-e^{-h})^n}$$
For convenience let us set  $U=e^{-h}-1$. We have
$$Res_{h=0}\frac 1{U^n}=(-1)^n$$
for $n>0$.

We note the following easy facts
 \begin{lemma}
$$Res_{h=0}\frac{(1+U)^k}{U^n}=-{k-1\choose n-1}\,.$$
\end{lemma}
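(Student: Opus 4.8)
The plan is to exploit that $1+U=e^{-h}$, so that $\frac{(1+U)^k}{U^n}$, viewed as a function of $h$, is meromorphic near $h=0$ with a pole of order at most $n$ (since $U=U(h)=e^{-h}-1$ has a simple zero there), and that $U=U(h)$ is a biholomorphic change of local coordinate in a neighbourhood of $h=0$, because $U(0)=0$ and $U'(0)=-1\neq 0$. First I would pass from the coordinate $h$ to the coordinate $U$ inside the residue, using the invariance of the residue of a meromorphic differential under such a change of coordinate.

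Concretely, from $U=e^{-h}-1$ one gets $dU=-e^{-h}\,dh=-(1+U)\,dh$, hence $dh=-\frac{dU}{1+U}$, and therefore
$$\frac{(1+U)^k}{U^n}\,dh=-\,\frac{(1+U)^{k-1}}{U^n}\,dU\,.$$
Taking residues at $h=0$ (equivalently at $U=0$) and invoking coordinate invariance of the residue gives
$$\mathrm{Res}_{h=0}\frac{(1+U)^k}{U^n}=-\,\mathrm{Res}_{U=0}\frac{(1+U)^{k-1}}{U^n}=-\,[U^{n-1}]\,(1+U)^{k-1}=-\binom{k-1}{n-1}\,,$$
where $[U^{n-1}]$ denotes the coefficient of $U^{n-1}$ in the (formal, or locally convergent) power series expansion of $(1+U)^{k-1}$.

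There is essentially no real obstacle here; the lemma collapses to the one-line change of variables above. The only points worth recording are that $k$ may be an arbitrary integer, in which case $(1+U)^{k-1}$ is read as a power series and $\binom{k-1}{n-1}=\frac{(k-1)(k-2)\cdots(k-n+1)}{(n-1)!}$ as the generalized binomial coefficient, and that for $k=0$ the formula specializes to $-\binom{-1}{n-1}=-(-1)^{n-1}=(-1)^n$, consistently with the computation $\mathrm{Res}_{h=0}\,U^{-n}=(-1)^n$ recalled just before the statement. If one wishes to avoid appealing to coordinate invariance of residues, the same identity can instead be extracted by writing $\mathrm{Res}_{h=0}\frac{(1+U)^k}{U^n}$ as the coefficient of $h^{n-1}$ in $\bigl(h/(e^{-h}-1)\bigr)^n e^{-kh}$ and manipulating the generating function, but the change of variables is the cleanest route.
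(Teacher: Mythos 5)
Your proof is correct, but it is genuinely different from the argument the paper actually records: the paper remarks that the computation ``can be done elementary'' and then instead derives the identity geometrically, identifying $Res_{h=0}\frac{(1+U)^k}{(-U)^n}$ with $\chi(\P^{n-1};\O(-k))$ via Hirzebruch--Riemann--Roch (the residue is precisely the coefficient of $h^{n-1}$ in $td(\P^{n-1})\,ch(\O(-k))$, using $td(\P^{n-1})=\bigl(h/(1-e^{-h})\bigr)^n$), and then invoking Serre duality to rewrite this as $(-1)^{n-1}\chi\bigl(\P^{n-1};\O(k-n)\bigr)$, which equals $(-1)^{n-1}\binom{k-1}{n-1}$ as a polynomial identity in $k$ valid for all integers $k$, not only $k\geq n$. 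Your route --- the change of local coordinate $U=e^{-h}-1$, $dh=-\,dU/(1+U)$, plus coordinate invariance of the residue of a meromorphic $1$-form --- reduces the claim to reading off $[U^{n-1}](1+U)^{k-1}=\binom{k-1}{n-1}$, and is shorter, fully self-contained, and makes transparent why the formula holds for arbitrary integer $k$ with the generalized binomial coefficient (your consistency check against $Res_{h=0}U^{-n}=(-1)^n$ at $k=0$ is exactly right). What the paper's derivation buys in exchange is conceptual context: it explains where such residues come from (integration of characteristic classes over $\P^{n-1}$) and why a binomial coefficient, namely $\dim Sym^{k-n}(\C^n)$, should appear --- which is the interpretation used later for cones over projective hypersurfaces. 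Either argument is acceptable; yours is the cleaner verification of the stated identity.
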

This computation can be done elementary, but having in mind
Riemann-Roch theorem and Serre duality we see immediately
$$Res_{h=0}\frac{(1+U)^k}{(-U)^n}=\chi(\P^{n-1};\O(-k))=(-1)^{n-1}\chi(\P^{n-1};\O(k-n))$$
which is equal to $(-1)^{n-1}h^0(\P^{n-1};\O(k-n))=\dim
\left(Sym^{k-n}(\C^n)\right)$ for large $k\geq n$, but the formula
for Euler characteristic holds for all $k$.

\begin{lemma}\label{sum} Let $S$ be an independent variable. For $0\leq k < n$ we have
\begin{equation}Res_{h=0}\frac{(1+U)^{k +1}}{U^n(S-U)}=
 -\frac{(1+S)^k}{S^n}\,.\label{eqsum}\end{equation}

\end{lemma}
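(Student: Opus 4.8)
The plan is to compute the residue by expanding the factor $1/(S-U)$ as a geometric series in $U/S$ and integrating term by term. Writing
\[
\frac{1}{S-U}=\frac1S\cdot\frac1{1-U/S}=\sum_{j\ge 0}\frac{U^j}{S^{j+1}}\,,
\]
which is a legitimate expansion for the purpose of extracting the $h^{n-1}$-coefficient (equivalently, of computing $Res_{h=0}$), we obtain
\[
Res_{h=0}\frac{(1+U)^{k+1}}{U^n(S-U)}=\sum_{j\ge 0}\frac1{S^{j+1}}\,Res_{h=0}\frac{(1+U)^{k+1}}{U^{n-j}}\,.
\]
By the first Lemma of the excerpt, $Res_{h=0}\frac{(1+U)^{k+1}}{U^{n-j}}=-\binom{k}{n-j-1}$ whenever $n-j\ge 1$, and the residue vanishes when $n-j\le 0$. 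Hence the sum runs over $0\le j\le n-1$, and setting $i=n-j-1$ it becomes
\[
Res_{h=0}\frac{(1+U)^{k+1}}{U^n(S-U)}=-\sum_{i=0}^{n-1}\binom{k}{i}\frac1{S^{n-i}}=-\frac1{S^n}\sum_{i=0}^{n-1}\binom{k}{i}S^{i}\,.
\]

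Now I would invoke the hypothesis $0\le k<n$: since $\binom{k}{i}=0$ for $i>k$, and $k\le n-1$, the truncated sum $\sum_{i=0}^{n-1}\binom{k}{i}S^i$ is in fact the complete binomial expansion $\sum_{i=0}^{k}\binom{k}{i}S^i=(1+S)^k$. This gives exactly
\[
Res_{h=0}\frac{(1+U)^{k+1}}{U^n(S-U)}=-\frac{(1+S)^k}{S^n}\,,
\]
which is the claimed identity \eqref{eqsum}.

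The only point that needs a word of care — and the place where the hypothesis $k<n$ is genuinely used — is the truncation step: the geometric-series manipulation produces a polynomial in $S$ of degree $n-1$ in the numerator, and it is precisely the bound $k\le n-1$ that lets us recognize it as $(1+S)^k$ rather than a proper truncation thereof. The interchange of the (finite, once we discard vanishing terms) sum with the residue is harmless since $Res_{h=0}$ is $\Q$-linear and only finitely many summands contribute; alternatively one can run the whole computation with $S$ treated as a formal parameter and all identities hold in $\Q[[S]]$ or in $\Q(S)$. An even shorter route, if preferred, is to note the partial-fraction-type identity $\frac{(1+U)^{k+1}}{S-U}=\frac{(1+S)^{k+1}}{S-U}-(1+U)\,\frac{(1+S)^{k+1}-(1+U)^{k}(1+S)}{S-U}$ and recurse on the second term, but the geometric-series argument above is the cleanest and I would present that.
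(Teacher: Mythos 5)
Your proof is correct and follows essentially the same route as the paper's: expand $\frac{1}{S-U}$ as a geometric series in $U/S$, apply the preceding residue lemma term by term, and use $k<n$ to recognize the truncated sum as the full binomial expansion of $(1+S)^k$. Your explicit remark on where the hypothesis $k<n$ enters matches the paper's own observation that the identity fails to be this simple for $k\geq n$.
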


\begin{proof}
\begin{align*}Res_{h=0}\frac{(1+U)^{k +1}}{U^n(S-U)}&
 =Res_{h=0}\left(\sum_{i=0}^\infty\frac{(1+U)^{k +1}}{S^{i+1}U^{n-i}}\right)\\
 &=-\sum_{i=0}^{n-1} {{k }\choose{n-i-1}} \frac1{S^{i+1}}\\
 &=-\frac{1}{S^n}\sum_{i=0}^{n-1}{{k }\choose{n-i-1}}{S^{n-i-1}}\\
&=-\frac{(1+S)^k }{S^n}
\end{align*}\end{proof}

The equality (\ref{eqsum}) for $k \geq n$ is much more
complicated, since summing  the  terms ${k \choose{n-i-1}}
{S^{n-i+1}}$ up to $n-1$ we do not have the full binomial
expansion of $(1+S)^k $. \s
Let $Y\subset \P^{n-1}$ be a subvariety (in fact $Y$ can be any constructible subset)
We give a formula for the Hirzebruch
class of the affine cone over $Y$ with the vertex removed.
 Let
$\T=\C^*$ acting on $\C^n$ by multiplication of coordinates.

\begin{proposition}\label{prco} Let $Y\subset \P^{n-1}$. Suppose that
$$td_y(Y\hookrightarrow \P^{n-1})=
h^n\frac {f(U)}{U^n}$$ for a polynomial $f\in \Z[y][U]$ which has
degree in $U$ less or equal than $n-1$. The Hirzebruch class of the cone without the vertex
$X=C_Y\setminus \{0\}\subset \C^n$ is given by the formula
$$td_y^\T(X\hookrightarrow \C^n)=t^n\delta\left(
\chi_y(Y)-\frac {f(S)}{S^n}\right)
$$
where $S=e^{-t}-1$.
\end{proposition}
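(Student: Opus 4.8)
The plan is to resolve the cone and reduce to the normal–crossing computation of \S\ref{divisor} together with the residue identity \eqref{eqsum}. Write $C_Y\subset\C^n$ for the affine cone over $Y$, so $X=C_Y\setminus\{0\}$, and let $\widetilde C_Y=\mathrm{tot}(\O_Y(-1))\to C_Y$ be the blow‑up of the vertex; composing with $C_Y\hookrightarrow\C^n$ gives a proper map $\rho\colon\widetilde C_Y\to\C^n$. The torus $\T=\C^*$ dilating $\C^n$ lifts to $\widetilde C_Y$ acting with weight $t$ on the fibres of $\O_Y(-1)$, so the fixed locus of $\widetilde C_Y$ is the zero section $Y_0\cong Y$ and $\rho^{-1}(0)=Y_0$ as a reduced scheme. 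Assume first $Y$ smooth; then $\widetilde C_Y$ is smooth, $X\subset\widetilde C_Y$ is the complement of the smooth divisor $Y_0$, Example~\ref{log1dim} gives $\Omega^y_{X_\bullet}=\Omega^y_{\widetilde C_Y}-\Omega^y_{Y_0}$, and by the motivic property \eqref{funkt}
$$td^\T_y(X\hookrightarrow\C^n)=\rho_*\bigl(td^\T(\widetilde C_Y)\,ch^\T(\Omega^y_{\widetilde C_Y}-\Omega^y_{Y_0})\bigr).$$
Since $\{0\}\hookrightarrow\C^n$ is a regular embedding with Cartesian reduced fibre $Y_0$, the excess–intersection formula evaluates the right‑hand side at $0$: with $N=N_{Y_0/\widetilde C_Y}=\O_Y(-1)$ of weight $t$ and $h=c_1(\O_Y(1))$, the excess bundle has rank $n-1$ and contributes $eu^\T=t^n/eu^\T(N)$, whence
$$td^\T_y(X\hookrightarrow\C^n)=t^n\int_{Y_0}\frac{td^\T_y(X\hookrightarrow\widetilde C_Y)\big|_{Y_0}}{eu^\T(N)}\,,\qquad eu^\T(N)=t-h .$$

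Next I would restrict the integrand to $Y_0$. The conormal sequence yields $\Omega^1_{\widetilde C_Y}\big|_{Y_0}=\Omega^1_Y\oplus N^\vee$ with $c^\T_1(N^\vee)=h-t$, so $ch^\T\bigl(\Omega^y_{\widetilde C_Y}\big|_{Y_0}\bigr)=ch(\Omega^y_Y)(1+y\,e^{\,h-t})$; Riemann–Roch for the divisor $Y_0$ gives $ch^\T(\Omega^y_{Y_0})\big|_{Y_0}=ch(\Omega^y_Y)(1-e^{\,h-t})$; and $td^\T(\widetilde C_Y)\big|_{Y_0}=td(Y)\,\tfrac{t-h}{1-e^{\,h-t}}$. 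The two contributions combine to $td_y(Y)\,\tfrac{(t-h)(1+y)\,e^{\,h-t}}{1-e^{\,h-t}}$, the factor $t-h$ cancels, and so
$$td^\T_y(X\hookrightarrow\C^n)=t^n(1+y)\int_Y td_y(Y)\,\frac{e^{\,h-t}}{1-e^{\,h-t}}\,,$$
where $td_y(Y)=td(Y)\,ch(\Omega^y_Y)$ is the Hirzebruch class of $Y$ itself.

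Finally I would convert this to a residue. With $U=e^{-h}-1$ and $S=e^{-t}-1$ one has $\dfrac{e^{\,h-t}}{1-e^{\,h-t}}=\dfrac{1+S}{U-S}$; pushing forward along $\iota\colon Y\hookrightarrow\P^{n-1}$ (projection formula) and inserting the hypothesis $\iota_*td_y(Y)=td_y(Y\hookrightarrow\P^{n-1})=h^nf(U)/U^n$ gives
$$td^\T_y(X\hookrightarrow\C^n)=t^n(1+y)(1+S)\,Res_{h=0}\frac{f(U)}{U^n(U-S)}\,.$$
Writing $f(U)=f(-1)+(1+U)\,\widetilde f(U)$ with $\deg\widetilde f\le n-2$ and applying Lemma~\ref{sum} to the second summand (and an elementary evaluation of $Res_{h=0}\frac1{U^n(S-U)}$ to the first) yields $(1+S)\,Res_{h=0}\frac{f(U)}{U^n(U-S)}=\frac{f(S)}{S^n}-(-1)^nf(-1)$; and $(-1)^nf(-1)=\chi_y(Y)$ because $\chi_y(Y)=\int_{\P^{n-1}}\iota_*td_y(Y)=Res_{h=0}\frac{f(U)}{U^n}$. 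Since $\delta=-1-y$ this is exactly the asserted formula. For singular $Y$ I would replace $\iota\colon Y\hookrightarrow\P^{n-1}$ by a cubical hyperresolution $\sum_i(-1)^i[g_i\colon Y_i\to\P^{n-1}]$ with $Y_i$ smooth and $g_i$ proper; the cone, its resolution and every step above are functorial for proper maps, so the general case follows by additivity from the smooth one (the hypothesis then forces $f=\sum_i(-1)^if_i$, and $\chi_y(Y)=\sum_i(-1)^i\chi_y(Y_i)$). The step I expect to be most delicate is the restriction to $Y_0$: one must track the equivariant weights carefully — it is there that the excess/Riemann–Roch factors produce the poles in $S$ — and one must verify that $\rho^{-1}(0)$ is the reduced zero section so that the excess‑intersection formula applies without correction.
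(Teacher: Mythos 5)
Your proof is correct and is essentially the paper's own argument: both resolve the vertex by blowing it up, push forward and localize at $0$ to reduce to an integral over the exceptional locus, apply the smooth-divisor-complement formula of Example~\ref{log1dim}, and evaluate the resulting residue by Lemma~\ref{sum}. The differences are organizational rather than substantive: the paper splits the integrand into logarithmic and residual parts (which produce the $f(S)/S^n$ and $\chi_y(Y)$ terms directly) and reduces to the basis $f=(1+U)^k$, whereas you keep the integrand whole and instead decompose $f(U)=f(-1)+(1+U)\widetilde f(U)$; your explicit reduction of singular $Y$ to the smooth case by hyperresolution makes precise a multiplicativity for $\widetilde X\to Y$ that the paper leaves implicit.
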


This proposition is an extension of the  formula for the
Chern-Schwartz-MacPherson class given in \cite{We}, which in turn
originates from \cite[Lemma 3.10]{AMfe}.

\begin{proof} For simplicity we assume that $f(U) =(1+U)^k $,
for $k=0,1,\dots, n-1$.  These polynomials form a basis of
$H^*(\P^{n-1})$.
 Let
$\widetilde\C^n$ denote the blowup of $\C^n$ at $0$ and let
$E=\P^{n-1}$ be the exceptional divisor, its first Chern class is
equal to $t-h$. Denote by $\widetilde X$ the proper transform of $X$ in $\widetilde \C^n$.  Hence
$$td_y^\T(\widetilde
X\hookrightarrow \widetilde\C^n)_{|E}=td_y^\T(Y\hookrightarrow
\P^n)\cdot (td_y^\T(N_E\widetilde\C^n)-[E]_{|E})\,,$$
and the characteristic class $td^\T_y$ applied to the normal bundle of the exceptional divisor $td^\T_y(N_E\widetilde\C^n)$ is equal to
$$td^\T_y(N_E\widetilde\C^n)=(t-h)\frac{1+y\,e^{h-t}}{1-e^{h-t}}=(t-h)\frac{1+y(1+S)/(1+U)}{1-(1+S)/(1+U)}\,.$$
As in   Example \ref{log1dim} we  decompose:
$$td_y^\T(\widetilde
X\hookrightarrow \widetilde\C^n)_{|E}=[LOG]+[RES]\,,$$
\begin{align}&[LOG]=td_y^\T(Y\hookrightarrow
\P^n)\cdot \left(td^\T_y(N_E\widetilde\C^n)-(1+\delta)[E]_{|E}) \right) \,,\\
&[RES]=td_y^\T(Y\hookrightarrow
\P^n)\cdot\delta[E]_{|E})\,.\end{align}
Since$$ \frac{1+y  (1+S)/(1+U)}{1-(1+S)/(1+U)}-(1-\delta)=\frac{ \delta (1 + U)}{S - U},$$
the logarithmic part of $td_y^\T(\widetilde X\hookrightarrow \widetilde \C^n)_{|E}$ is equal to
$$[LOG]=\delta(t-h)\frac {1+U}{S-U}td_y^\T(Y\hookrightarrow \P^n)$$
 and the residual part is just push-forward of the original Hirzebruch class multiplied by $\delta$
$$[RES]=\delta(t-h)\,td_y^\T(Y\hookrightarrow \P^n)\,.$$ This decomposition resembles
the decomposition of one dimensional space in Proposition
\ref{divisor2}  (\ref{snclog}).
 We will compute the Hirzebruch class using functoriality (\ref{funkt}) and
the local integration formula
 \begin{align*}\frac 1{t^n} td_y^\T( X\hookrightarrow \C^n)_{|\{0\}}
 &=\int_{E}\frac 1{t-h}\left(td_y^\T(\widetilde
 X\hookrightarrow \widetilde\C^n)_{|E}\right)=\int_{\P^{n-1}}\frac{[LOG]+[RES]}{t-h}
 \end{align*}
The integral of the logarithmic part is computed due to Lemma
\ref{sum}:
$$\int_{\P^{n-1}}\frac{ [LOG]}{t-h}=
\delta\,Res_{h=0}(1+U)^k\frac {1+U}{S-U}=-\frac{(1+S)^k}{S^n}\,.$$ The
second ingredient is
$$\int_{\P^{n-1}}\frac{[RES]}{t-h}=\int_E\delta\,td(\P^{n-1}) (1+U)^k =\delta\,\chi_y(Y)\,.$$
The formula of the Proposition follows.
\end{proof}

Let us concentrate on the case $y=0$. Now let $X$ be the closed cone over a smooth  hypersurface
$Y\subset \P^{n-1}$ of degree $d$. The Todd class of $Y$ is equal
to $$td(\P^{n-1})ch^\T(\O_Y)=\frac{h^n}{(-U)^n}(1-(1+U)^d)\,,$$ hence
$$td((\P^{n-1}\setminus Y)\hookrightarrow \P^{n-1})=\frac {h^n}{(-U)^n}(1+U)^d\,,$$
and from Proposition \ref{prco} for $d<n$
$$td^\T((\C^n\setminus X)\hookrightarrow \C^n)=\frac {t^n}{(-S)^n}(1+S)^d\,.$$
We conclude that
$$td^\T(X\hookrightarrow \C^n)=\frac {t^n}{(-S)^n}(1-(1+S)^d)\,.$$
We have used the assumption of Proposition \ref{prco} that $d<n$,
and by Lemma \ref{sum} the formula holds also for $d=n$.

\begin{corollary} \label{conicalra} Let $d\leq n$, then
$$td^\T(X\hookrightarrow \C^n)=td^\T(\C^n)\cdot ch^\T(\O_X)\,.$$
\end{corollary}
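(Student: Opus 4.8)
The plan is to prove the corollary simply by identifying the two factors on the right-hand side and matching their product against the closed formula obtained just above; no new resolution of singularities is needed, only the blow-up already used in the discussion preceding the statement.

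For the left-hand side there is nothing to add: that discussion already shows, via Proposition \ref{prco} for $d<n$ and via Lemma \ref{sum} for the boundary case $d=n$, that
$$td^\T(X\hookrightarrow \C^n)=\frac{t^n}{(-S)^n}\bigl(1-(1+S)^d\bigr)\,,\qquad S=e^{-t}-1\,.$$
So it remains to compute $td^\T(\C^n)\cdot ch^\T(\O_X)$. First I would note that $\C^*$ acts on $\C^n$ by scaling, so the tangent representation $T_0\C^n$ has all $n$ weights equal to $t$, whence
$$td^\T(\C^n)=\Bigl(\frac{t}{1-e^{-t}}\Bigr)^n=\frac{t^n}{(-S)^n}\,.$$
Next I would compute $ch^\T(\O_X)$ from the Koszul resolution of the hypersurface $X=\{g=0\}$, with $g$ a single homogeneous polynomial of degree $d$; it reads $0\to \O_{\C^n}(-d)\to \O_{\C^n}\to \O_X\to 0$, where $\O_{\C^n}(-d)$ denotes $\O_{\C^n}$ with the equivariant structure that makes multiplication by $g$ a morphism of $\T$-sheaves. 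Since $g$ lies in the degree-$d$ part of the coordinate ring and coordinate functions transform with weight $-t$, the function $g$ transforms with weight $-d\,t$; hence the fibre of $\O_{\C^n}(-d)$ at the vertex is the one-dimensional representation of weight $-d\,t$, so $ch^\T(\O_{\C^n}(-d))_{|0}=e^{-dt}=(1+S)^d$ and $ch^\T(\O_X)_{|0}=1-(1+S)^d$. Multiplying the two factors reproduces exactly the displayed formula for $td^\T(X\hookrightarrow \C^n)$, and the corollary follows.

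The only place that needs care is the bookkeeping of the equivariant twist in the Koszul complex, i.e.\ checking that one obtains $(1+S)^d$ and not $(1+S)^{-d}$; this is pinned down by tracking how the defining equation transforms, and it is consistent with the quadric-cone example computed earlier, where $n=3$, $d=2$ and indeed $ch^\T(\O_X)=1-T_1^2T_2^2$. It is worth recording that this corollary is precisely the promised explicit instance of the equality $td_0^\T(X\hookrightarrow M)=td^\T(M)\,ch^\T(\O_X)$ for the cone hypersurfaces of degree $d\le n$ listed in the introduction among the varieties whose Hirzebruch class agrees with its Baum--Fulton--MacPherson value.
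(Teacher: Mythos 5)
Your proposal is correct and follows essentially the same route as the paper, which derives the closed formula $td^\T(X\hookrightarrow\C^n)=\frac{t^n}{(-S)^n}\bigl(1-(1+S)^d\bigr)$ from Proposition \ref{prco} (with Lemma \ref{sum} covering $d=n$) and then reads off the corollary by recognizing the two factors as $td^\T(\C^n)$ and $ch^\T(\O_X)$, the latter coming from the twisted resolution $0\to\O_{\C^n}(-d)\to\O_{\C^n}\to\O_X\to0$ exactly as you describe. Your explicit check of the equivariant twist against the quadric-cone example is a sensible way to pin down the sign convention, and your closing remark correctly places the corollary as the promised Du Bois / Baum--Fulton--MacPherson instance from the introduction and \S\ref{bfm}.
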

In fact as explained in \cite[Example 3.2]{BSY} and discussed in \S\ref{bfm} the conclusion of Corollary holds
for Du Bois singularities.\s
We note another phenomenon  which is valid
only for the degree $d=2$:

\begin{proposition} \label{kwadr} If $X=Q_n\subset\C^n$ is a quadratic cone then both
$$td^\T((\C^n\setminus Q_n)\hookrightarrow \C^n)\quad
\text{and} \quad  td^\T( Q_n\hookrightarrow \C^n)$$ when expanded in
the variables $S$ and $\delta$ have nonnegative coefficients. The
same holds for the logarithmic part of  $td^\T(\C^n\setminus Q_n\hookrightarrow
\C^n)$.
\end{proposition}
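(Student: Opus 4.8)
The plan is to reduce to the cone over a smooth quadric and then apply Proposition~\ref{prco}. If the quadratic form defining $X$ has rank $r<n$, then in suitable coordinates $X=X_0\times\C^{\,n-r}$ with $X_0\subset\C^{r}$ the cone over a smooth quadric in $\P^{r-1}$ (and $\C^{n}\setminus X=(\C^{r}\setminus X_0)\times\C^{\,n-r}$); by the product property of the Hirzebruch genus the local class of $X$, and of its complement, at $0$ is the product of the corresponding class for $X_0\subset\C^r$ with $td^\T_y(\C^{\,n-r})_{|0}$, the latter having nonnegative coefficients in the $S_i$ and $\delta$ by Proposition~\ref{divisor2}; a product of such expressions is again of this kind, so it suffices to treat the nondegenerate case, and for $r\le 2$ one is reduced to a coordinate subspace, to $\{x_1x_2=0\}$, or to its complement, all covered by Proposition~\ref{divisor2}. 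So we may assume $n\ge3$ and $X=C_Y\subset\C^n$ with $Y=\{q=0\}\subset\P^{n-1}$ smooth and $\T=\C^*$ acting by scalars.

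The second step is to produce the input of Proposition~\ref{prco}. Writing $\iota\colon Y\hookrightarrow\P^{n-1}$, $h$ for the hyperplane class and $U=e^{-h}-1$, the conormal sequence $0\to\O_Y(-2)\to\iota^*\Omega^1_{\P^{n-1}}\to\Omega^1_Y\to0$ gives in $K(Y)$ the identities $\Lambda_y\Omega^1_Y=\iota^*\Lambda_y\Omega^1_{\P^{n-1}}\cdot(1+y\,\O_Y(-2))^{-1}$ and $td(TY)=\iota^*td(T\P^{n-1})\cdot\tfrac{1-e^{-2h}}{2h}$; pushing forward by Grothendieck--Riemann--Roch for the closed embedding $\iota$ (with $\iota_*1=2h$) one gets
$$td_y(Y\hookrightarrow\P^{n-1})=h^{n}\,\frac{(2+U)\,(1+y\,e^{-h})^{n}}{(-U)^{\,n-1}\,(1+y)\,(1+y\,e^{-2h})}\pmod{h^{n}},$$
which identifies the polynomial $f\in\Q[y][U]$, $\deg f\le n-1$, with $td_y(Y\hookrightarrow\P^{n-1})=h^nf(U)/U^n$; writing $f=\sum_{k=0}^{n-1}a_k(y)(1+U)^k$ in the basis used to prove Proposition~\ref{prco} one has $a_0=(-1)^n\chi_y(Y)$ and $f(0)=0$, and $\operatorname{Res}_{h=0}$ recovers $\chi_y(Y)=\chi_y(Q^{n-2})$ --- namely $\sum_{p=0}^{n-2}(-y)^p$ for $n$ odd and $\sum_{p=0}^{n-2}(-y)^p+(-y)^{(n-2)/2}$ for $n$ even. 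Feeding $f$ into Proposition~\ref{prco} and adding $eu(0)=t^n$ for the vertex yields $td^\T_y(X\hookrightarrow\C^n)_{|0}=t^n\bigl(1+\delta\chi_y(Y)-\delta f(S)/S^n\bigr)$, and hence the complement via $td^\T_y(\C^n)_{|0}-td^\T_y(X\hookrightarrow\C^n)_{|0}$; the logarithmic part of the complement is the $\delta^{0}$-term alone --- the $[\mathrm{LOG}]$ piece in the proof of Proposition~\ref{prco}, equivalently the $I=\emptyset$ term of Theorem~\ref{t1} applied to the simple normal crossing divisor $E\cup\widetilde X$ in the blow-up $\widetilde{\C^n}$, whose complement is $\C^n\setminus X$.

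The last step is the positivity check. After the substitutions $e^{-t}=1+S$, $\,y=-1-\delta$, the polynomial attached to the closed cone is $(-1)^{n}ch^\T(mC_*(X\hookrightarrow\C^n))_{|0}=S^{n}\bigl(1+\delta\,\chi_y(Q^{n-2})\bigr)-\delta f(S)$, with analogous expressions for the complement and for the logarithmic part. The first summand is harmless: re-expanding $\chi_y(Q^{n-2})$ in $\delta$ via $-y=1+\delta$ gives $\sum_{p=0}^{n-2}(1+\delta)^p$ (plus $(1+\delta)^{(n-2)/2}$ when $n$ is even), whence $1+\delta\,\chi_y(Q^{n-2})=(1+\delta)^{n-1}$ (plus $\delta(1+\delta)^{(n-2)/2}$ when $n$ is even), manifestly nonnegative in $\delta$. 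Since $\deg_S f\le n-1$ the summand $-\delta f(S)$ cannot be compensated by the $S^{n}$-divisible term, so one must show directly that $-\delta f(S)$, and the corresponding corrections in the complement and logarithmic cases, expand with nonnegative coefficients. This is where degree $2$ matters: the only ``characteristic polynomials'' that occur are $(1+S)^{2}=1+2S+S^{2}$ and $(1+S)^{2}-1=S(2+S)$, both with nonnegative coefficients in $S$, so the $a_k(y)$ are small, explicit combinations of binomial numbers in $n$ with $\chi_y(Q^{n-2})$, and the claim reduces to a finite inequality on their expansions in $\delta=-1-y$. Controlling the signs of these $a_k$ --- with a case split on the parity of $n$, to absorb the extra middle cohomology class of $Q^{n-2}$ in the even case --- is the only real obstacle; everything before it is routine once Proposition~\ref{prco} is in hand.
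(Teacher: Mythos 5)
Your reduction to the nondegenerate case, the pushforward computation of $td_y(Y\hookrightarrow\P^{n-1})$ for the smooth quadric via the conormal sequence, and the identity $1+\delta\,\chi_y(Q^{n-2})=(1+\delta)^{n-1}$ (plus $\delta(1+\delta)^{(n-2)/2}$ for $n$ even) all check out, and feeding the resulting $f$ into Proposition \ref{prco} is a legitimate way to set the problem up. But the argument stops exactly where the proposition begins. You correctly observe that, since $\deg_S f\le n-1$, the summand $-\delta f(S)$ cannot interact with the $S^n$-divisible term, so the claim is \emph{equivalent} to the coefficientwise nonnegativity of $-\delta f(S)$ (and of the analogous correction polynomials for the complement and for the logarithmic part) in the variables $S$ and $\delta$ --- and then you declare this verification ``the only real obstacle'' without carrying it out. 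No inequality is stated, the expansion of the $a_k(y)$ in $\delta=-1-y$ is never performed, and the parity case split is only announced. Since the nonnegativity of these correction polynomials \emph{is} the content of Proposition \ref{kwadr}, what you have produced is a correct reformulation of the statement, not a proof of it.

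For comparison, the paper closes precisely this gap by an induction in steps of two: it exhibits
$$td_y^\T(Q_n\to\C^n)=\frac{\delta S(2+S)(\delta+S+\delta S)^{n-2}}{S^n}+(1+\delta)\,td_y^\T(Q_{n-2}\to\C^{n-2})\,,$$
together with the analogous recursion for the complement whose increment is $\frac{\delta^2(1+S)^2(\delta+S+\delta S)^{n-2}}{S^n}$, so that each inductive step adds a manifestly nonnegative Laurent polynomial to a base case already covered by Proposition \ref{divisor2}; positivity of the logarithmic part is then read off as the sum of terms with negative exponent of $S$. To complete your route you would have to extract from your pushforward formula an equally explicit closed form or recursion for $f$ and verify the signs term by term --- in effect re-deriving the paper's recursion. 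Until that is done, the proposal has a genuine gap.
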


\proof The proof is by induction. We check directly that for $n>2$
(we omit straight-forward computations, see \cite{MiWe})
$$td_y^\T(Q_n\to\C^n)=
\frac{\delta  S (2 + S) (\delta  + S + \delta  S)^{n-2}}{S^n}+(1+\delta)td_y^\T(Q_{n-2}\hookrightarrow\C^{n-2})$$
\begin{align*}td_y^\T((\C^{n}\setminus Q_{n})\to\C^{n})=&
\frac{\delta ^2 (1 + S)^2 (\delta  + S + \delta  S)^{n-2}}{S^n}\\+&
(1+\delta)td_y^\T((\C^{n-2}\setminus Q_{n-2})\to\C^{n-2})\end{align*}
as functions in $S$ and $\delta$.
The positivity of the classes $td_y^\T(Q_n\to\C^n)$ and $td_y^\T((\C^{n}\setminus Q_{n})\to\C^{n})$ follows. The logarithmic part of  $td_y^\T((\C^{n}\setminus Q_{n})\to\C^{n})$ is just the sum of terms with negative exponents of $S$, therefore it has nonnegative coefficients as well.
\qed

\begin{example}\rm We have said that for the quadratic cones the class $td^\T_y(Q_n\hookrightarrow \C^n)$ is positive in $\delta$ and $S$ variables.
Another example of a homogenous hypersurface which has the positive Hirzebruch class is the cone over an elliptic curve
$E$:
$$td^\T_y(Cone(E)\hookrightarrow \C^3)=\frac1{S^2} (3 \delta^2 + 3
\delta^2 S + S^2)\,,$$

 $$td^\T_y((\C^3\setminus Cone(E))\hookrightarrow \C^3)=\frac\delta{S^3} (1 + S) (\delta^2 + 2 \delta^2 S + 3 S^2 + 3 \delta S^2 + \delta^2 S^2))\,.$$
All other hypersurfaces in higher dimension spaces or of higher degrees do not have positive Hirzebruch classes.
\end{example}
\section{Toric varieties}\label{toroz}
Let us come back to the general situation considered in
\S\ref{divisor} of the SNC divisor complement, but with the additional
assumption that the fixed points of the torus action are
contained in the 0-dimensional stratum of the divisor:

\begin{lemma}\label{llemm} Suppose that $M$ is a  smooth variety of dimension $n$,  $X=M\setminus D$ is the complement of a SNC divisor. Assume
$$M^\T=\bigcup_{|I|=n} D_I$$ with the  notation of \S\ref{divisor}.
Then
$$td_y^\T(X\hookrightarrow M)=(y+1)^n td^\T(X\hookrightarrow M)\,,$$
up to $H^*_\T(pt)$-torsion.
\end{lemma}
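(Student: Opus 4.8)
The plan is to use Theorem \ref{t1} together with the localization theorem in equivariant cohomology. By Theorem \ref{t1} (applied equivariantly, which is legitimate since all the sheaves involved are $\T$-equivariant and the construction commutes with the approximation $E\T_m\times_\T(-)$) we have
$$td_y^\T(X\hookrightarrow M)=\sum_{I}\delta^{|I|}\,td^\T(M)\,ch^\T(L\Omega_I)\,,\qquad \delta=-1-y\,,$$
and similarly $td^\T(X\hookrightarrow M)$ is the $y=0$ (i.e. $\delta=-1$) specialization, or more precisely is obtained from the same recollection with $\Omega^y$ replaced by $\Omega^\bullet$ and $y$ set appropriately. The key point is that by the localization Theorem \ref{quloc} (the singular-target version, or simply working on the smooth $M$ via Theorem \ref{locfo}), an equivariant class is determined up to $H^*_\T(pt)$-torsion by its restrictions to the fixed points $M^\T=\bigcup_{|I|=n}D_I$. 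So it suffices to check the identity $td_y^\T(X\hookrightarrow M)_{|p}=(y+1)^n\,td^\T(X\hookrightarrow M)_{|p}$ for each fixed point $p\in D_I$ with $|I|=n$.

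First I would localize at such a point $p$. Since $p$ is a fixed point lying in the $0$-dimensional stratum $D_I$ with $|I|=n$, after an analytic (torus-equivariant) change of coordinates the divisor $D$ near $p$ is exactly the union of all $n$ coordinate hyperplanes, so $X$ near $p$ is the complement of $\prod_{i=1}^n x_i=0$ in $\C^n$. This is precisely the case $n=k$ of Proposition \ref{divisor2}. From that proposition,
$$td^\T_y(X\hookrightarrow\C^n)_{|0}=eu(0)\,\delta^n\prod_{i=1}^n\frac{1+S_i}{S_i}\,,$$
where $S_i=e^{-w_i}-1$ and $w_i$ are the weights. Setting $y=0$, i.e. $\delta=-1$, gives $td^\T(X\hookrightarrow\C^n)_{|0}=eu(0)\,(-1)^n\prod_{i=1}^n\frac{1+S_i}{S_i}$. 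Dividing, $td^\T_y(X\hookrightarrow\C^n)_{|0}=\delta^n(-1)^{-n}\,td^\T(X\hookrightarrow\C^n)_{|0}=(1+y)^n\,td^\T(X\hookrightarrow\C^n)_{|0}$, since $\delta^n(-1)^n=(-\delta)^n=(1+y)^n$. Thus the desired equality holds at every fixed point.

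The only subtlety — and the step I would be most careful about — is the passage from "equal after restriction to all fixed points" to "equal up to $H^*_\T(pt)$-torsion." For this I invoke the localization theorem (Theorem \ref{quloc}, or equivalently the localization statement in equivariant homology stated in the previous section): the restriction map $H^*_\T(M)\to H^*_\T(M^\T)$ has torsion kernel, so two classes with the same image differ by torsion; here $M$ need only be smooth (not complete), so Theorem \ref{quloc} applies directly, and the Hirzebruch class indeed lives in $H^*_\T(M)$ because $M$ is smooth. One small point to verify is that the local model computation is compatible with the global restriction, i.e. that restricting the global $td_y^\T(X\hookrightarrow M)$ to $p$ and then comparing with the local $\C^n$ model is legitimate — this follows from the fact that both the Hirzebruch class and the restriction to a fixed point are local and are compatible with open (equivariant) embeddings, which is part of the functoriality (\ref{funkt}) already recorded. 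Putting these together yields the claim.
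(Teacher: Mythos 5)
Your proposal is correct and follows essentially the same route as the paper: the paper likewise invokes the localization theorem (Theorem \ref{locH}) to reduce to the fixed points, which by hypothesis all lie in the zero-dimensional strata $D_I$ with $|I|=n$, and then reads off the equality from the local SNC computation of Proposition \ref{divisor2} (formula (\ref{snc1})), giving $td_y^\T(X\hookrightarrow M)_{|p}=(y+1)^n\prod_{i\in I}\frac{w_ie^{-w_i}}{1-e^{-w_i}}$. The appeal to Theorem \ref{t1} in your opening is an unnecessary detour, but the argument you actually carry out is the paper's.
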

Note that  we study the Hirzebruch class mainly in two cases:
\begin{itemize}
\item global class in $H^*_\T(M)$ for $M$ smooth and complete, \item the restriction
to a fixed point.\end{itemize} In both situations  there is
no $H^*_\T(pt)$-torsion.

\begin{proof} By Localization Theorem \ref{locH} it is enough to verify the equality at the fixed points. Indeed, if $p\in D_I=\bigcap_{i\in I} D_i$
$$td_y^\T(X\hookrightarrow M)_{|p}=\prod_{i\in I}\frac{w_i(y+1)e^{-w_i}}{1-e^{-w_i}}=(y+1)^n\prod_{i\in I}\frac{w_ie^{-w_i}}{1-e^{-w_i}}$$
by formula (\ref{snc1}), where $w_i=[D_i]_{|p}$.\end{proof}

The above Lemma extends to the case of a general pair $(M,D)$, provided
that it has a resolution with desired properties. In particular we obtain
for toric varieties:

\begin{corollary} \label{toric} Let $X$ be a toric variety, then
$$td_y^T(id_X)=\sum_{\rm{orbits}}(1+y)^{\dim \O_\sigma}td^T(O_\sigma)\,.$$
\end{corollary}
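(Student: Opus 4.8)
The plan is to reduce the statement to a single orbit by additivity of the Hirzebruch transform, and then to recognise each orbit as a simple normal crossing divisor complement to which Lemma~\ref{llemm} applies after a toric resolution. First I would use the orbit stratification $X=\bigsqcup_\sigma O_\sigma$: the orbit strata assemble into a filtration of $X$ by closed subvarieties (the union of all orbit closures of dimension $\le d$), so iterating the additivity relation $td_y^\T(Z\hookrightarrow X)+td_y^\T(U\hookrightarrow X)=td_y^\T(id_X)$ (valid for $X=Z\sqcup U$, $Z$ closed) gives
$$td_y^\T(id_X)=\sum_\sigma td_y^\T(O_\sigma\hookrightarrow X)\,.$$
Reading $td^\T(O_\sigma)$ in the statement as the specialization $td_0^\T(O_\sigma\hookrightarrow X)$ (consistent with the notation of Lemma~\ref{llemm}), it then suffices to prove, for each orbit,
$$td_y^\T(O_\sigma\hookrightarrow X)=(1+y)^{\dim O_\sigma}\,td_0^\T(O_\sigma\hookrightarrow X)$$
up to $H^*_\T(pt)$-torsion; summing over $\sigma$ yields the corollary.

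Next, for a fixed orbit $O_\sigma$ with $k=\dim O_\sigma$, I would produce a toric resolution of its closure. The closure $\overline{O_\sigma}$ is again a (normal) toric variety, for the quotient torus but retaining the full $\T$-action, with $O_\sigma$ its open dense orbit and $\overline{O_\sigma}\setminus O_\sigma$ the union of the smaller orbit closures. Subdividing the fan of $\overline{O_\sigma}$ into a nonsingular fan, leaving the zero cone (which is already smooth) untouched, yields a $\T$-equivariant proper birational morphism $\widetilde{O_\sigma}\to\overline{O_\sigma}$ with $\widetilde{O_\sigma}$ a smooth toric variety of dimension $k$, an isomorphism over $O_\sigma$, and with $D_\sigma:=\widetilde{O_\sigma}\setminus O_\sigma$ equal to the toric boundary, hence a simple normal crossing divisor. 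In a smooth toric variety of dimension $k$ the $\T$-fixed points are precisely the $0$-dimensional orbits, and each such orbit is the transverse intersection of exactly $k$ of the boundary divisors; hence $\widetilde{O_\sigma}^{\,\T}=\bigcup_{|I|=k}(D_\sigma)_I$, which is exactly the hypothesis of Lemma~\ref{llemm} for the pair $(\widetilde{O_\sigma},D_\sigma)$. That lemma then gives
$$td_y^\T(O_\sigma\hookrightarrow\widetilde{O_\sigma})=(1+y)^{k}\,td_0^\T(O_\sigma\hookrightarrow\widetilde{O_\sigma})$$
up to $H^*_\T(pt)$-torsion.

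Finally I would push this identity forward along the proper map $\widetilde{O_\sigma}\to\overline{O_\sigma}\hookrightarrow X$, which restricts to the orbit inclusion over $O_\sigma$. By the functoriality~(\ref{funkt}) the pushforward sends $td_y^\T(O_\sigma\hookrightarrow\widetilde{O_\sigma})$ to $td_y^\T(O_\sigma\hookrightarrow X)$, and likewise at $y=0$; being $H^*_\T(pt)$-linear it preserves the scalar factor $(1+y)^k$ and carries torsion to torsion, so $td_y^\T(O_\sigma\hookrightarrow X)=(1+y)^{\dim O_\sigma}\,td_0^\T(O_\sigma\hookrightarrow X)$ up to torsion, which is the reduction of the first step. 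Summing over all orbits proves the formula, exactly after inverting the nonzero characters, and in particular on the nose whenever $X$ is complete and when one restricts to a fixed point. The step I expect to be the real content is this passage from the smooth, complete setting of Lemma~\ref{llemm} to the possibly singular and non-complete orbit closures: one must invoke equivariant toric resolution of singularities to supply an SNC boundary all of whose torus fixed points lie in the deepest stratum, and one must keep track of the $H^*_\T(pt)$-torsion ambiguity, which however behaves well under the proper pushforwards used here.
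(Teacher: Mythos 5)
Your proof is correct and follows essentially the same route as the paper: additivity over the orbit stratification, recognition of each orbit as an SNC divisor complement in a completed and torically resolved orbit closure, an application of Lemma~\ref{llemm} there, and proper pushforward via the functoriality~(\ref{funkt}). The paper states this in two sentences; your version merely spells out the details (in particular why every fixed point of a smooth toric variety lies in the deepest boundary stratum, and how the torsion ambiguity travels through the pushforward).
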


\begin{proof} For the basic properties of toric varieties we refer the reader to  \cite{Futor}. We argue that for each orbit $\O_\sigma$ we have  $$td_y^T(\O_\sigma\hookrightarrow X)=(1+y)^{\dim \O_\sigma}td^T(O_\sigma)\,.$$
The claim is true for smooth complete toric varieties by Lemma \ref{llemm}, since each
orbit is a SNC divisor complement in its closure. The  general case follows since each toric variety can be completed
and resolved in the category of toric varieties.\end{proof}

The nonequivariant version of Corollary \ref{toric} appeared in
\cite{MaSc}. The  Euler-Maclaurin formula for the Todd class of a
closed orbit was given by many authors, see e.g.  \cite{BV}
(for simplicial cones) and by Brylinski-Zhang \cite{BZ} in general.
One has to sum the lattice points in the dual cones
$$td^\T(id_{X_\Sigma})=\sum_{\sigma\in \Sigma'}\sum_{m\in\sigma^\vee} e^{-m}[\O_\sigma]\,,$$
where the sum is taken over the cones of maximal dimension indexed
by $\Sigma'\subset\Sigma$. The formula makes sense under the
identification of the dual lattice ${\rm Hom}(\T,\C^*)$ with
$H^2_\T(pt)$.

To compute the class $td^\T_y(\O_\tau \hookrightarrow X)$ of an orbit we have to
modify the formula: the closure of the orbit is defined by the fan
which combinatorially is the link of $\tau$. From the Hirzebruch
class of $\overline{\O_\sigma}$ one has to subtract the classes of
the boundary. We obtain:
$$td^\T(\O_\tau\hookrightarrow X)=\sum_{\begin{matrix}\sigma\in \Sigma'\\\sigma\succ
\tau\end{matrix}}\;\sum_{m\in int(\sigma^\vee\cap \tau^\perp)} e^{-m}[\O_\sigma]\,.$$
We sum the contributions coming from orbits to obtain a global formula
$$td^\T_y(id_X)=\sum_{\tau\in \Sigma}(1+y)^{\dim \O_\tau}\sum_{\begin{matrix}\sigma\in \Sigma'\\\sigma\succ
\tau\end{matrix}}\;\sum_{m\in int(\sigma^\vee\cap \tau^\perp)} e^{-m}[\O_\sigma]\,.$$
The local Hirzebruch class is given by a modified Euler-Maclaurin sum:

\begin{theorem}\label{toma}
The restriction of the Hirzebruch class to the fixed point
corresponding to a maximal cone $\sigma\in \Sigma'$ is equal
$$td^\T_y(id_X)){|p_\sigma}=\sum_{\sigma\succ
\tau}(1+y)^{\dim \O_\tau}\sum_{m\in int(\sigma^\vee\cap \tau^\perp)} e^{-m}[\O_\sigma]\,.$$
\end{theorem}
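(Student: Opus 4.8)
The plan is to obtain Theorem \ref{toma} as a direct localization of the global formula
$$td^\T_y(id_X)=\sum_{\tau\in \Sigma}(1+y)^{\dim \O_\tau}\sum_{\begin{matrix}\sigma\in \Sigma'\\\sigma\succ
\tau\end{matrix}}\;\sum_{m\in int(\sigma^\vee\cap \tau^\perp)} e^{-m}[\O_\sigma]$$
established just above, by restricting both sides to the fixed point $p_\sigma$ associated to a maximal cone $\sigma\in\Sigma'$. The key point is that for a toric variety the torus-fixed points correspond exactly to the maximal cones, and for two maximal cones $\sigma,\sigma'$ one has $[\O_{\sigma'}]_{|p_\sigma}=0$ unless $\sigma'=\sigma$, because $p_\sigma$ lies in the closure $\overline{\O_{\sigma'}}$ only when $\sigma\succ\sigma'$, which for maximal cones forces equality. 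Hence after restriction the outer double sum collapses: the only surviving terms are those with $\sigma'=\sigma$, and the condition $\sigma'\succ\tau$ becomes $\sigma\succ\tau$.

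First I would recall (or cite, as this is standard toric geometry, e.g.\ the orbit--cone correspondence) that $[\O_\sigma]_{|p_\sigma}=1$ in $H^*_\T(\{p_\sigma\})=H^*_\T(pt)$ after the identification used in the paper, where $[\O_\sigma]$ denotes the equivariant fundamental class of the closure of the orbit $\O_\sigma$; more precisely one should track that $e^{-m}[\O_\sigma]_{|p_\sigma}=e^{-m}$ under the identification of $\mathrm{Hom}(T,\C^*)$ with $H^2_\T(pt)$, so that the characters $m\in int(\sigma^\vee\cap\tau^\perp)$ restrict to the corresponding elements $e^{-m}\in H^*_\T(pt)$. Second, I would apply this vanishing/normalization to each summand of the global formula: a term indexed by $(\tau,\sigma')$ with $m\in int(\sigma'^\vee\cap\tau^\perp)$ restricts to $(1+y)^{\dim\O_\tau}e^{-m}$ if $\sigma'=\sigma$ and to $0$ otherwise. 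Third, collecting the surviving terms gives exactly
$$td^\T_y(id_X)_{|p_\sigma}=\sum_{\sigma\succ\tau}(1+y)^{\dim\O_\tau}\sum_{m\in int(\sigma^\vee\cap\tau^\perp)} e^{-m}[\O_\sigma]\,,$$
which is the claimed formula (the bracket $[\O_\sigma]$ being harmless/equal to $1$ after restriction, matching the notation of the statement).

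The only genuine subtlety, and the step I expect to be the main obstacle, is the convergence/well-definedness of the infinite sums over lattice points $m\in int(\sigma^\vee\cap\tau^\perp)$ after restriction: these are formal power series in the completed ring $H^*_\T(pt)=\prod_k H^k_\T(pt)$, so one must argue that the restriction map commutes with the relevant inverse limits and infinite sums. This is precisely the content of the remark in the definition of $td^\T_y$ that ``the limit stabilizes'' in each bounded range of gradations, together with the fact that the Euler--Maclaurin sums converge in the $\m$-adic topology because each character $m\in\sigma^\vee$ contributes $e^{-m}=1-m+\tfrac{m^2}{2}-\cdots$ with $m$ of positive degree (interior lattice points of a strongly convex cone have unbounded degree), so only finitely many contribute in each fixed cohomological degree. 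Once this formal bookkeeping is in place, the restriction is term-by-term and the proof reduces to the elementary vanishing statement $[\O_{\sigma'}]_{|p_\sigma}=\delta_{\sigma\sigma'}$ for maximal cones, so I would keep the write-up short: recall the orbit--cone correspondence, note the vanishing, restrict the global formula, and invoke the stabilization of the defining limit to justify exchanging restriction with the sums.
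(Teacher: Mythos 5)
Your proposal is correct and follows the same route the paper (implicitly) takes: Theorem \ref{toma} is obtained by restricting the global formula displayed just above it to the fixed point $p_\sigma$, where all summands $[\O_{\sigma'}]$ with $\sigma'\neq\sigma$ die because they are supported away from $p_\sigma$, so only the terms with $\sigma\succ\tau$ survive. Your remarks on the convergence of the Euler--Maclaurin sums in the completed ring and on keeping $[\O_\sigma]$ symbolic after restriction are sensible bookkeeping and do not change the argument.
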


\begin{example}\label{g24}\rm  Suppose $\T=(\C^*)^4$ acts on $\C^4$ with
weights $t_3-t_1\,,\,t_4-t_1\,,\,t_3-t_2\,,\,t_4-t_2$ (in fact a
quotient 3-dimensional torus acts effectively). Let $X$ be defined
by the equation $x_1x_4=x_2x_3$. It is the affine
cone over $\P^1\times\P^1\subset \P^3$.
The variety $X$ is an affine toric variety associated to a cone $\sigma_X\in \Z^3$.
The dual cone of $X$  $$\sigma_X^\vee\subset \{(t_1,t_2,t_3,t_4)\in \Z^4\,:\,t_1+t_2+t_3+t_4=0\}\simeq\Z^3$$ is spanned by the rays
$t_3-t_1\,,\,t_4-t_1\,,\,t_3-t_2\,,\,t_4-t_2$. (This is \cite[Example of \S1.3, p.18]
{Futor} with the notation
$e^*_1=t_3-t_1$,
$e^*_3=t_4-t_1$,
$e^*_1+e^*_2=t_3-t_2$,
$e^*_2+e^*_3=t_4-t_2$.)
We compute Euler-Maclaurin sum for each face of the cone and multiply it by a power of $1+y$. Let
$T_{ij}=e^{t_i-t_j}$. The sums are equal to:
{\def\gg#1{\frac{T_{#1}}{1-T_{#1}}}\begin{itemize}
\item 1 for the vertex.
\item For the rays:

$(1+y)\left(\sum_{i=1}^\infty T_{13}^i+\sum_{i=1}^\infty T_{14}^i+\sum_{i=1}^\infty T_{23}^i+\sum_{i=1}^\infty T_{24}^i\right)=$

\hfill$=(1+y)\left(\gg{13}+\gg{14}+\gg{23}+\gg{24}\right)$.

\item Similarly for the two dimensional faces

$(1+y)^2\left(\gg{13}\gg{14}+\gg{13}\gg{23}+\gg{14}\gg{24}+\gg{23}\gg{24}\right)$.
\item To compute the sum of the entries in the interior we divide the cone into two simplicial cones. The first one is spanned by $t_3-t_1,t_4-t_1, t_3-t_2$, the second by $t_4-t_2,t_4-t_1, t_3-t_2$. The common face is spanned by $t_4-t_1, t_3-t_2$, see \cite[p.49]{Futor}. The rays of these cones generate the corresponding semigroups of integral points, therefore the summand corresponding to the interior of the cone is equal to

$(1+y)^3\left(\gg{13}\gg{14}\gg{23}+\gg{42}\gg{14}\gg{23}+\gg{14}\gg{23}\right)$.
\end{itemize}
}
\noindent Let
$S_{ij}=T_{ij}-1$ and substitute $(y+1)$ by $-\delta$.
 The localized Hirzebruch class of $X$ is
equal to
{\def\ff#1{\frac{1+S_{#1}}{S_{#1}}}
\begin{equation} \label{minus}1 + \delta
\left(\ff{13} + \ff{14} + \ff{23}+ \ff{24}\right)+\end{equation}

$$+ \delta ^2 \left(\ff{13}\ff{14}+ \ff{13}\ff{23} + \ff{14}\ff{24} + \ff{23} \ff{24}\right) +$$

\begin{equation*}  +\delta ^3\left(\ff{13}\ff{14}\ff{23}+\ff{42}\ff{14}\ff{23}-\ff{14}\ff{23}\right)\,.\end{equation*}
After simplification the last summand is equal to
$$\delta ^3 \frac{\left(1+S_{14}\right) \left(1+S_{23}\right) \left(S_
{13} + S_{24} + S_{13} S_{24}\right)}{S_{13} S_{14} S_{23} S_{24}}\,.$$
}
\end{example}

\section{$A_n$ singularities}
We consider here the surface singularities of   type $A_n$ they
are important but easy from the computational point of view. On
one hand they form the first series in the list of simple
singularities, on the other hand they are quotient singularities of
$\C^2$ by a cyclic group contained in $SL_2(\C)$, and finally they admit an action of a
2-dimensional torus and the computations can be done by means to
Theorem \ref{toma}.

Let us define $A_{n-1}$ singularity as the quotient $\C^2$ by
$\Z_n\subset\C^*$ which acts on $\C^2$ by the formula
$\xi(z_1,z_2)=(\xi z_1,\xi^{-1} z_2)$. The ring of invariants is
generated by the monomials $x_1:=z_1^n$, $x_2:=z_2^n$,
$x_3:=z_1z_2$. Another description of this singularity is given by
the zeros of the polynomial $$X=\{(x_1,x_2,x_3)\in
\C^3\,|\,x_1x_2=x_3^n\}$$ in $\C^3$. The torus $\T=(\C^*)^2$
acting on $\C^3$ with weights $w_1=n\,t_1$, $w_2=n\,t_2$,
$w_3=t_1+t_2$ preserves $X$. Let $$H=\{(t_1,t_2)\in \T\;:\;t_1t_2=1\,,\;t_1^n=1\}\simeq\Z/n\,.$$ The quotient torus $\T'=\T/H$ acts on $X$ effectively. The affine toric variety $X$ corresponds to a rational cone $\sigma\subset Hom(\C^*,\T')\otimes \Q)=Hom(\T,\C^*)\otimes\Q$. Its
dual cone  $\sigma^\vee\subset  Hom(\T,\C^*)\otimes\Q$ is spanned by the weights $w_1=n\,t_1$ and $w_1=n\,t_2$. The lattice in $Hom(\T,\C^*)\otimes\Q$ corresponding to the quotient torus is generated by
$n\,t_1$,  $n\,t_2$ and
$t_1+t_2$. According to Theorem \ref{toma} we
sum up
$$\begin{matrix}
1 & 0-\text{dimensional orbit}\hfill\\ \\
(y+1)\sum_{k=1}^\infty e^{-k\, n\, t_1} +(y+1) \sum_{\ell=1}^\infty e^{-\ell\, n\, t_2}&1-\text{dimensional orbits}\hfill\\ \\
(y+1)^2 \sum_{k,\ell}^\infty e^{-k\, n\, t_1+\ell\, n\,
t_2}\;\sum_{i=1}^n e^{-i(t_1+t_2)}&2-\text{dimensional
orbit.}\hfill
\end{matrix}$$

\begin{center}\pgfdeclareimage[height=5cm]{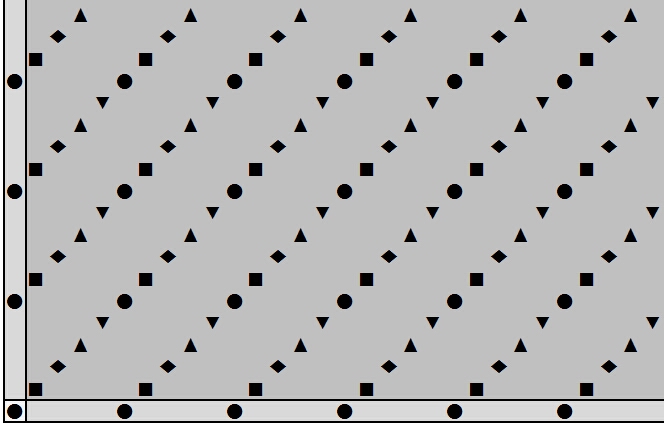}{Anbw}\pgfuseimage{Anbw}
\end{center}

We introduce new
variables $S_{ni}=e^{-n\,t_i}-1$, $S_{12}=e^{-(t_1+t_2)}-1$ and as always
$\delta=-1-y$. Then
$$\frac{td_y^\T(X\hookrightarrow
\C^3)_{|0}}
 {eu(0)}=1+\delta\left(\frac{1+S_{n1}}{S_{n1}}+\frac{1+S_{n2}}{S_{n2}}\right)
 +\delta^2\frac{1}{S_{n1}S_{n2}}\;\sum_{i=1}^n (S_{12}+1)^i
 $$

We see that the class of the $A_{n-1}$ singularity in $\C^3$ is of
the form $\frac1{S_{n1}S_{n2}S_{12}}$ multiplied by a polynomial
in $\delta$, $S_{n1}$, $ S_{n2}$ and $ S_{12}$ with nonnegative
coefficients.
One can check that in our case the local Hirzebruch class of
the complement is  positive as well. Divided by the Euler class it
is equal to
$$
\delta \frac{1+S_{12}}{S_{12}} + \delta^2 \left(
\frac{1+S_{n1}}{S_{n1}}\frac{1+S_{n2}}{S_{n2}}+
\frac{1+S_{12}}{S_{12}} \right) + \delta^3 \frac{1+S_{n1}}{S_{n1}}
\frac{1+S_{n2}}{S_{n2}}\frac{1+S_{12}}{S_{12}}
$$

The remaining du Val singularities, $D_n$ series and $E_n$ singularities are
 invariant with respect to actions of one dimensional torus.  By a direct computation one finds that
 the Hirzebruch classes of these singularities are positive in variables $\delta=-1-y$ and $S=T-1$.

\section{Positivity for simplicial toric varieties}

The method of summation of lattice points applied in the case of $A_n$ can be  generalized\footnote{I thank Oleg Karpenkov for driving my attention to this method of summation}.
\begin{theorem}\label{torsimp}Let $X$ be the toric variety defined by a   simplicial cone $\sigma\subset Hom(\C^*,\T)$. We fix an equivariant embedding $X\to\C^N$ given by a choice of generators $$w_1,w_2,\dots, w_N\in Hom(\T,\C^*)\cap \sigma^\vee\,.$$
We assume that $w_1,w_2,\dots, w_n$ are the primitive vectors spanning the rays of $\sigma^\vee$.
Then the Hirzebruch class of the open orbit $td_y^\T(\O\hookrightarrow\C^N)$ is of the form
$$\delta ^n \prod_{i=1}^r\frac1S_{w_i}\cdot P(\{S_{w_i}\}_{i=1,2,\dots,N})\,,$$ where $ P(\{S_{w_i}\}_{i=1,2,\dots,N})$ is a polynomial expression with nonnegative coefficients depending on the variables $S_{w_i}=e^{-w_i}-1$.\end{theorem}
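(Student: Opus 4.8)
The plan is to evaluate the class at the fixed point $p_\sigma\in X$ (which maps to $0\in\C^N$), to rewrite it as a lattice point generating function via Corollary~\ref{toric}, and then to apply the fundamental parallelepiped decomposition of a simplicial cone — this last step being where simpliciality is genuinely used. Let $M\subseteq\Hom(\T,\C^*)$ be the character lattice of the torus acting on $X$, so that $\sigma^\vee\subseteq M\otimes\Q$ is the $n$-dimensional simplicial cone with primitive ray generators $w_1,\dots,w_n$ and $X=\mathrm{Spec}\,\C[\sigma^\vee\cap M]$; the characters $w_1,\dots,w_N$ generate the semigroup $\sigma^\vee\cap M$, hence $X\hookrightarrow\C^N$ is a closed equivariant embedding, the weights of $T_0\C^N$ are $w_1,\dots,w_N$, and $eu(0)=\prod_{j=1}^N w_j$. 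The open orbit $\O$ has dimension $n$.

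First I would reduce to a lattice point sum. By Corollary~\ref{toric} — equivalently Lemma~\ref{llemm} applied after an equivariant toric completion and resolution of $X$ — one has $td_y^\T(\O\hookrightarrow X)=(1+y)^n\,td^\T(\O\hookrightarrow X)$, and pushing forward along $X\hookrightarrow\C^N$ via the functoriality diagram~(\ref{funkt}) the same relation holds with target $\C^N$; so it suffices to compute $td^\T(\O\hookrightarrow\C^N)_{|0}$. I would use the factorization of the Hirzebruch class through $G_\T(\C^N)$: this class equals $td^\T(\C^N)_{|0}\cdot ch^\T(mC_0(\O\hookrightarrow\C^N))$. To identify the second factor, write $[\O]=\sum_{I}(-1)^{|I|}[D_I]$ in $K(\mathrm{Var}/\C^N)$ (inclusion--exclusion over the torus-invariant boundary divisors $D_i$, with $D_I=\bigcap_{i\in I}D_i$ a torus-orbit closure), observe that each such toric stratum is Du Bois, so $mC_0$ sends it to its structure sheaf, and read off equivariant Hilbert series: the boundary terms kill exactly the monomials $e^{-m}$ with $m$ on a facet of $\sigma^\vee$, leaving $ch^\T(mC_0(\O\hookrightarrow\C^N))=\big(\sum_{m\in\mathrm{int}(\sigma^\vee)\cap M}e^{-m}\big)\prod_{j=1}^N(1-e^{-w_j})$. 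Since $td^\T(\C^N)_{|0}=eu(0)\prod_{j=1}^N(1-e^{-w_j})^{-1}$, the products over $j$ cancel and
$$td_y^\T(\O\hookrightarrow\C^N)_{|0}=(1+y)^n\,eu(0)\sum_{m\in\mathrm{int}(\sigma^\vee)\cap M}e^{-m}\,.$$
(This is also the $\tau=\{0\}$ case of the orbit Euler--Maclaurin identity stated just before Theorem~\ref{toma}, the extra $eu(0)$ coming from the embedding.)

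Next comes the combinatorial heart. Since $w_1,\dots,w_n$ are linearly independent, every $m\in\mathrm{int}(\sigma^\vee)$ is uniquely $\sum_{i=1}^n c_iw_i$ with $c_i>0$; taking $k_i=\lceil c_i\rceil-1\in\Z_{\ge0}$ gives the \emph{disjoint} decomposition
$$\mathrm{int}(\sigma^\vee)\cap M=\bigsqcup_{\varepsilon\in\Pi'}\Big(\varepsilon+\sum\nolimits_{i=1}^n\Z_{\ge0}\,w_i\Big),\qquad\Pi'=\Big\{\sum\nolimits_{i=1}^n d_iw_i:\ 0<d_i\le1\Big\}\cap M,$$
with $\Pi'$ finite (its cardinality is the multiplicity of $\sigma$). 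Summing the geometric series in each $w_i$ and writing $e^{-w_i}=1+S_{w_i}$, so that $(1-e^{-w_i})^{-1}=-S_{w_i}^{-1}$,
$$\sum_{m\in\mathrm{int}(\sigma^\vee)\cap M}e^{-m}=\Big(\sum_{\varepsilon\in\Pi'}e^{-\varepsilon}\Big)\prod_{i=1}^n\frac{1}{1-e^{-w_i}}=(-1)^n\Big(\prod_{i=1}^n\frac{1}{S_{w_i}}\Big)\sum_{\varepsilon\in\Pi'}e^{-\varepsilon}.$$
Finally, each $\varepsilon\in\Pi'$ lies in $\sigma^\vee\cap M$, hence is a nonnegative integral combination $\varepsilon=\sum_{j=1}^N a_j(\varepsilon)\,w_j$ of the semigroup generators, so $e^{-\varepsilon}=\prod_{j=1}^N(1+S_{w_j})^{a_j(\varepsilon)}$ is a polynomial in $S_{w_1},\dots,S_{w_N}$ with nonnegative coefficients, and therefore $P:=\sum_{\varepsilon\in\Pi'}e^{-\varepsilon}$ is such a polynomial. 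Combining the above and using $(1+y)^n(-1)^n=\delta^n$,
$$td_y^\T(\O\hookrightarrow\C^N)_{|0}=eu(0)\cdot\delta^n\Big(\prod_{i=1}^n\frac{1}{S_{w_i}}\Big)P\big(\{S_{w_j}\}_{j=1,\dots,N}\big),$$
which is the asserted form, with the normalizing factor $eu(0)=\prod_{j=1}^N w_j$ divided out as in \S\ref{toroz} and Proposition~\ref{divisor2}. The step I expect to be the real obstacle is the disjoint parallelepiped decomposition: everything else goes through for an arbitrary rational cone, but for non-simplicial $\sigma$ the set $\mathrm{int}(\sigma^\vee)\cap M$ is not a disjoint union of translates of a single copy of $\Z_{\ge0}^{\,n}$, and the positivity can genuinely fail; the only other delicate point is the Hilbert series identification of $mC_0(\O\hookrightarrow\C^N)$, which rests on the factorization of $td_y$ through $K$-theory of coherent sheaves and on the Du Bois property of toric singularities and their orbit closures.
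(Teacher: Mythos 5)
Your proof is correct and follows essentially the same route as the paper: reduce to the Euler--Maclaurin lattice-point sum over $\mathrm{int}(\sigma^\vee)\cap M$ via Theorem~\ref{toma} (your $\Pi'$ is exactly the paper's $A_0=A\cap \mathrm{int}(\sigma^\vee)\cap \Hom(\T,\C^*)$), decompose that set as a disjoint union of translates of the $\Z_{\ge0}$-span of $w_1,\dots,w_n$ using simpliciality, sum the geometric series, and get positivity because each parallelepiped point is a nonnegative integral combination of the semigroup generators (the paper phrases this as iterating $S_{w+w'}=S_wS_{w'}+S_w+S_{w'}$, you as expanding $\prod_j(1+S_{w_j})^{a_j}$ -- the same computation). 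Your more detailed rederivation of the lattice-point formula via the $K$-theory factorization and the Du Bois property is a harmless elaboration of what the paper simply cites as Theorem~\ref{toma}.
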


\begin{proof}
 The vectors $w_1,w_2,\dots, w_n$ generate a sublattice  $\Lambda\subset Hom(\T,\C^*)$. The quotient group $(Hom(\T,\C^*)/\Lambda$ is finite. Let $A$ be  the closed cube spanned by $w_1,w_2,\dots, w_n$ and
 $$A_0=A\cap int(\sigma^\vee)\cap Hom(\T,\C^*)\,.$$
 Then $$Hom(\T,\C^*)\cap int(\sigma^\vee)=\bigsqcup_{w\in A_0} \; \bigsqcup_{k_1,k_2,\dots,k_{n}>0} \left\{w+\sum_{i=1}^{n}k_jw_i\right\}\,.$$
The Euler-Maclaurin sum is equal to
$$\sum_{w\in  A_0} e^{-w}\prod_{i=1}^{n}\frac{1}{1-e^{-w_{i}}}=(-1)^n\sum_{w\in  A_0}(S_w+1)\prod_{j=1}^{n_i}\frac { 1}{S_{w_{i }}}\,.$$
Since $S_{w+w'}=S_wS_{w'}+S_w+S_{w'}$, the above formula can be rewritten in terms of the generators of $Hom(\T,\C^*)\cap\sigma^\vee$. Applying Theorem \ref{toma}
we arrive to the conclusion. Note that since $y+1=-\delta$ the signs cancel.\end{proof}

If $\sigma$ is not simplicial, then we can apply the same method of summation. The dual cone $\sigma^\vee$ can be divided into a finite number of open simplicial  cones. Some of these the cones will be of lower dimension. Multiplying by $(-\delta)^n$ we do not get rid of sign alternation (see the minus in Example \ref{g24}, formula (\ref{minus})). Indeed in the dimension four there exist examples of toric singularities with Hirzebruch class, which is not positive. One can check that the cone over the suspension of a pentagon (precisely the cone spanned by
$P_1 = (0, 0, 1, 1),\, P_2 = (1, 0, 1, 1),\, P_3 = (2, 1, 1, 1),\, P_4 = (1, 2,
  1, 1),\, P_5 = (0, 1, 1, 1),\,R_1 = (1, 1, 2, 1)$ and $R_2 = (1, 1, 0, 1)$)
has non-positive Hirzebruch class.

Instead in the dimension three we have:

\begin{proposition}Let $X$ be the toric variety defined by a  three dimensional  cone $\sigma\subset Hom(\C^*,\T)$.
Suppose that the dual cone is spanned by the primitive vectors $w_1,w_2,\dots,w_k$ and $w_{k+1},w_{k+2},\dots, w_N$ are remaining generators of $\sigma^\vee$.
Then the Hirzebruch class of the open orbit $td_y^\T(\O\hookrightarrow\C^N)$ is of the form
$$\delta ^3 \prod_{i=1}^k\frac1S_{w_i}\cdot P(\{S_{w_i}\}_{i=1,2,\dots,N})\,,$$ where $ P(\{S_{w_i}\}_{i=1,2,\dots,N})$ is a polynomial expression with nonnegative coefficients depending on the variables $S_{w_i}=e^{-w_i}-1$.\end{proposition}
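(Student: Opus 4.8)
The plan is to run the argument of Theorem~\ref{torsimp} — reduce to an Euler-Maclaurin summation over $int(\sigma^\vee)\cap Hom(\T,\C^*)$ — but organized as an induction on the number $k$ of extreme rays of $\sigma^\vee$, adjoining one ray at a time.

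Since $\sigma$ is three dimensional, a cross section of $\sigma^\vee$ is a convex polygon whose vertices are the primitive ray generators $w_1,\dots,w_k$; order them cyclically and, for $3\le j\le k$, let $\sigma^\vee_j$ be the cone spanned by $w_1,\dots,w_j$. Then $\sigma^\vee=\sigma^\vee_k$, the cross section of $\sigma^\vee_j$ is a $j$-gon, and $\sigma^\vee_j=\sigma^\vee_{j-1}\cup\langle w_1,w_{j-1},w_j\rangle$, the two pieces meeting exactly along the two dimensional face $\langle w_1,w_{j-1}\rangle$. Decomposing interiors disjointly gives, for the partial sums $\Sigma_j:=\sum_{m\in int(\sigma^\vee_j)\cap Hom(\T,\C^*)}e^{-m}$, the recursion $\Sigma_j=\Sigma_{j-1}+\Phi_j+\Psi_j$, where $\Phi_j$ counts the lattice points of the open cone $\langle w_1,w_{j-1},w_j\rangle$ and $\Psi_j$ those of the relative interior of the two dimensional cone $\langle w_1,w_{j-1}\rangle$. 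By Theorem~\ref{toma} (its $\tau=\{0\}$ term), exactly as in the proof of Theorem~\ref{torsimp}, the class $td^\T_y(\O\hookrightarrow\C^N)$ is $(1+y)^3=(-\delta)^3$ times $\Sigma_k$; so it is enough to prove, by induction on $j$, that $\Sigma_j=-G_j/\prod_{i=1}^jS_{w_i}$ for a polynomial $G_j$ with nonnegative coefficients in $S_{w_1},\dots,S_{w_N}$.

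The crux is the term $\Phi_j+\Psi_j$. By the fundamental parallelepiped formula,
$$\Phi_j=\frac{\sum_{w\in\Pi_3}e^{-w}}{\prod_{i\in\{1,j-1,j\}}(1-e^{-w_i})},\qquad \Psi_j=\frac{\sum_{w\in\Pi_2}e^{-w}}{(1-e^{-w_1})(1-e^{-w_{j-1}})}\,,$$
with $\Pi_3=\{\lambda_1w_1+\lambda_{j-1}w_{j-1}+\lambda_jw_j:\lambda\in(0,1]^3\}\cap Hom(\T,\C^*)$ and $\Pi_2=\{\lambda_1w_1+\lambda_{j-1}w_{j-1}:\lambda\in(0,1]^2\}\cap Hom(\T,\C^*)$. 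Over the common denominator $\prod_{i\in\{1,j-1,j\}}(1-e^{-w_i})$ the numerator is $\sum_{\Pi_3}e^{-w}+\sum_{\Pi_2}e^{-w}-\sum_{\Pi_2}e^{-w-w_j}$, and the key observation is that $\Pi_2+w_j$ is the set of lattice points in the face $\lambda_j=1$ of the closed parallelepiped, so $\Pi_2+w_j\subseteq\Pi_3$; hence the first and third sums combine into a sum over a subset of $\Pi_3$, and the whole numerator is a sum of monomials $e^{-w}$ with $w\in\sigma^\vee\cap Hom(\T,\C^*)$. Each such $w$ is a nonnegative integral combination of the semigroup generators, so $e^{-w}=\prod(1+S_{w_i})^{m_i}$ has nonnegative coefficients in the $S_{w_i}$; using $1-e^{-w_i}=-S_{w_i}$ this gives $\Phi_j+\Psi_j=-N_j/(S_{w_1}S_{w_{j-1}}S_{w_j})$ with $N_j$ of nonnegative coefficients. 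With the base case $j=3$ (the simplicial case of Theorem~\ref{torsimp}), feeding this into the recursion and clearing denominators yields
$$\Sigma_j=-\frac{G_{j-1}S_{w_j}+N_j\prod_{i=2}^{j-2}S_{w_i}}{\prod_{i=1}^jS_{w_i}}=:-\frac{G_j}{\prod_{i=1}^jS_{w_i}}\,,$$
and $G_j=G_{j-1}S_{w_j}+N_j\prod_{i=2}^{j-2}S_{w_i}$ is a sum of products of polynomials with nonnegative coefficients. Taking $j=k$ and multiplying by $(-\delta)^3$ gives $td^\T_y(\O\hookrightarrow\C^N)=\delta^3\prod_{i=1}^kS_{w_i}^{-1}\cdot G_k$, so $P=G_k$.

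The main obstacle is exactly the positivity of $\Phi_j+\Psi_j$, and this is where three dimensionality is essential: a convex polygon can be built up one vertex at a time so that each step adds precisely one new simplicial cone and turns precisely one old two dimensional face interior, the new cone's parallelepiped then absorbing the translated face. In dimension $\ge 4$ a cross section is a $3$-polytope, adjoining a vertex can turn several facets interior at once and create new interior faces of intermediate dimension among the new simplices, and the analogous absorption fails — in accordance with the nonpositive example of the cone over the suspension of a pentagon noted above. Two minor points to handle with care: the normalization identifying $td^\T_y(\O\hookrightarrow\C^N)$ with $(-\delta)^3\Sigma_k$ (inherited from Theorem~\ref{torsimp}), and the case $\det(w_1,w_{j-1},w_j)>1$, which only enlarges $\Pi_3$ and leaves the inclusion $\Pi_2+w_j\subseteq\Pi_3$ untouched.
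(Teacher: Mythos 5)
Your proof is correct and is essentially the paper's argument: the paper likewise decomposes the interior of the cross-section polygon into one open triangle plus half-open triangles (each carrying one added edge) and runs the Euler--Maclaurin summation of Theorem \ref{torsimp} on each piece. Your explicit absorption step $\Pi_2+w_j\subseteq\Pi_3$ is exactly the content of the paper's terser prescription that for the cones over the half-open triangles one should ``choose $A_0$ to contain the corresponding edge monomials, not its opposite in the interior,'' i.e.\ your combined numerator $\sum_{\Pi_3}e^{-w}-\sum_{\Pi_2+w_j}e^{-w}+\sum_{\Pi_2}e^{-w}$ is the sum over precisely that half-open fundamental parallelepiped.
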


\begin{proof} The cone $\sigma^\vee$ is the cone over a  $k$-polygon $P$.
This polygon can be
divided  into triangles in a way
that no additional vertex is introduced. Intersection of two triangles is an edge.
 Consequently, the polygon $P$ is the disjoint union $int(P)=\bigsqcup_{i=1}^{k-2}P_i$, where $P_1$ is an open triangle and $P_i$ for $i>1$ are triangles with one edge added. Modifying the proof of Theorem
\ref{torsimp} we show that each piece of decomposition gives a nonnegative contribution to $td^\T_y(X)$. Precisely, for the cones $\tau_i=cone(P_i)$,
 $i>1$ we choose $A_0$ to contain the corresponding edge monomials of $\tau_i$, not its opposite in the interior.\end{proof}

\section{Comparison with Baum-Fulton-MacPherson class}
\label{bfm} The equality \begin{equation}td^\T_0(X\hookrightarrow
\C^n)=td^\T(\C^n)ch^\T(\O_X)\label{rownosc}\end{equation} does not hold always. The simplest
example is the cusp of the type $A_{2n}$.
\begin{example}\rm Let $X=\{x^2=y^{2n+1}\}\subset \C^2$ with the torus
$\C^*$ action having weights $(2n+1)t\,,\, 2t$. Then
$$td^\T_0(X\hookrightarrow
\C^2)=f_*(td^\T(\C))\,,$$ where $f:\C\to\C^2\,,$
$f(z)=(z^{2n+1},z^2)$ is the normalization and $\C$ is equipped
with the action of $\C^*$ of weight $t$. The fundamental class of $X$ in
$H^*_\T(\C^2)$ is equal to $2(2n+1)t$. Hence
$$td^\T_0(X\hookrightarrow
\C^2)=2(2n+1)t\frac t{1-T}\,,$$
while Baum-Fulton-MacPherson class is equal to
$$td(\C^2)ch^\T\O_X=td(\C^2)ch^\T\big(\O_{\C^2}-\O_{\C^2}(-X)\big)=\frac{ (2n+1)t}{1-T^{2n+1}} \frac
{2t}{1-T^2}\big(1-T^{2(2n+1)}\big)\,.$$
\end{example}
Also taking the affine cone over a smooth hypersurface in $\P^{n-1}$ of
degree $d>n$ (applying Proposition \ref{prco}) we  obtain
counterexamples which are normal. We treat the equality (\ref{rownosc}) as a
special property of the singularity germ. On the level of
$K$-theory we simply ask if
$$mC_0(id_X)=[\O_X]\,.$$
If it is the case then Baum-Fulton-MacPherson-Todd  class \cite{BFM} of $X$
  coincides with
$td_0(id_X)$. The same holds in the equivariant setup.

Let us recall some conditions for the equality. There exists a
characterization of the Du Bois singularities which fits to our
situation the best. It is given in \cite{Sc}. Suppose that $X$ is
a subvariety in a smooth ambient space, $f:\widetilde{M}\to M$ is a proper map such that $f_{|f^{-1}(M\setminus X)}$ is an
isomorphism and $D=f^{-1}(X)$ is a smooth divisor with normal
crossings (we say $f$ is a resolution of the pair $(M,X)$). Then $X$ has Du Bois singularities if and only if the
natural map
$$\O_X\to Rf_*\O_D$$ is a quasi-isomorphism.  Then
$$mC_0(M\setminus X\hookrightarrow
M)=f_*mC_0(\widetilde{M}\setminus X\hookrightarrow
\widetilde{M})=[Rf_*\O_{\widetilde{M}}]-[Rf_*\O_D]=[\O_{M}]-[\O_X]\,.$$
Hence
$$mC_0(X\hookrightarrow
M)=[\O_X]\,.$$
 (Compare \cite[Example 3.2]{BSY}.) If $X$ has rational
singularities i.e.~$X$ is normal and
$Rf_*\O_{\widetilde{X}}=f_*\O_{\widetilde{X}}=\O_X$, then by
\cite{Ko} it has at most Du Bois singularities. Most of the examples we
have considered here have rational singularities:  Schubert varieties by
\cite[Theorem 4]{Ra}, toric varieties by \cite[Cor. 3.9]{Od}. One can add to this list the cones over a smooth hyperplane in $\P^{n-1}$ (considered in Corollary \ref{conicalra}) provided that the degree is smaller than $n$. (If $d=n$ the equality still holds, but the singularity is not rational.) If
$X$ is a surface with rational singularities, then the exceptional
divisor of $X$ is a tree of rational curves. If
$f:\widetilde{X}\to X$ is a resolution of rational singularities
of higher dimension, then the associated complex of intersections
of divisors is $\Q$-acyclic, \cite[Theorem 3.1]{ABW}.
 But
only from rationality we do not get enough information about
derived direct images of $L\Omega^y_{D_I}$. There should exist a
natural class of varieties for which the positivity discussed
below holds.

\section{Question of positivity}
\label{udod}
Let $\T=(\C^*)^r$ be a torus acting on $\C^n$ with weights which
are nonnegative combinations of the basis characters. Suppose that
$X\subset \C^n$ is an invariant subvariety. As it was explained in
the introduction we develop the class
$$mC_*(X\hookrightarrow \C^n)\in
G_{\T}(\C^n)[y]=K_{\T}(\C^n)[y]=K_{\T}(pt)[y]$$ in the basis of
monomials in $\delta=-1-y$ and $S_i=T^i-1$. Let us assume that $X$
is an open subset and let $f:\widetilde{\C}^n\to \C^n$ be a
resolution of the pair $(\C^n,X)$. Then using
the notation of \S\ref{divisor}
$$mC_*(X)=\sum_I \delta ^{|I|}Rf_*(L\Omega^{y}_I)\,.$$
Examples considered by us (quadratic cones of Proposition \ref{kwadr}, simplicial toric varieties) show that all sheaves (or their
Chern characters)
$$Rf_*(L\Omega^{y}_I)=\sum_{p=0}^{\dim X-|I|}(-1-\delta)^p
Rf_*(L\Omega^{p}_I)$$
 have nonnegative coefficients in the
expansion. On the other hand further decomposition into sheaves
$(-1-\delta)^p Rf_*(L\Omega^{p}_I)$ does not preserve positivity. A
counterexample is the cone over a smooth quadric. It suggests that
the sheaf $Rf_*L\Omega^y_I=f_*\Lambda_{-1-\delta}(T^*X_I(-\log D ))$
should be treated as a single object.

We  would like to point out that the positivity of the class
$\alpha\in K_\T(\C^n)$ can be understood in various ways
\begin{enumerate}
\item $\alpha$ is represented by an effective sheaf
\item $\alpha$ is represented by  a sum of the sheaves
$(-1)^{\codim Y}[\O_Y]$ where $Y$ are subvarieties in $\C^n$
\item $\alpha$ is  represented by a sum of the sheaves
$(-1)^{\codim Y}[\O_Y]$ where $Y$ are subvarieties in $\C^n$ with
rational singularities.
\item $\alpha$ is a polynomial in
$S_w$  with nonnegative coefficients, where $w$ are
the weights of the action of $\T$ on $\C^n$.
\end{enumerate}
We note that
\begin{theorem} The condition 3) implies
that $\alpha$ is  a polynomial in $S_{t_i}$  with
nonnegative coefficients, where $t_i$ is the positive basis of characters
fixed  in the beginning.\label{posa}\end{theorem}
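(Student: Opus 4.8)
The plan is to reduce condition 3) to a single generating case and then to the positivity result of Theorem \ref{posy}. First I would observe that the statement is additive: if $\alpha = \sum_Y (-1)^{\codim Y}[\O_Y]$ with each $Y$ a subvariety of $\C^n$ having rational singularities, it suffices to prove the claim for a single summand, i.e.\ to show that for an invariant subvariety $Y\subset\C^n$ with rational singularities, $(-1)^{\codim Y}ch^\T([\O_Y])_{|0}$ is a polynomial in the $S_{t_i}=e^{-t_i}-1$ with nonnegative coefficients. This is precisely the $K$-theoretic analogue of Theorem \ref{posy} stated in the introduction (the theorem labelled there with the variables $S_1,\dots,S_r$), so the bulk of the work is to supply its proof; Theorem \ref{posa} then follows by summing.

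For that $K$-theoretic statement the key step is to pass from the fundamental class in equivariant cohomology to the class of the structure sheaf in equivariant $K$-theory via degeneration. I would use the standard flat degeneration of $Y\subset\C^n$ to its tangent cone or, better, to a reduced union of coordinate linear subspaces $\bigcup_\alpha L_\alpha$ (invariant under $\T$), which is possible precisely because $\T$ acts with weights that are nonnegative combinations of the $t_i$, so one can use a one-parameter subgroup to flatly degenerate $Y$ to an initial scheme supported on coordinate subspaces; rationality of singularities guarantees (via Kleiman transversality and the $Tor$-vanishing cited in the discussion after Theorem \ref{posy}, cf.\ \cite{AGM,Br}) that in $K_\T(\C^n)$ one has $(-1)^{\codim Y}[\O_Y]=\sum_\alpha c_\alpha[\O_{L_\alpha}]$ with $c_\alpha\ge 0$. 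Since $L_\alpha$ is a coordinate subspace cut out by some of the coordinates $x_{i_1},\dots,x_{i_s}$ with weights $t_{i_1},\dots,t_{i_s}$ (again using that all weights are in the positive cone so each coordinate carries a single basis character, after possibly refining the splitting), the Koszul resolution gives $ch^\T([\O_{L_\alpha}])_{|0}=\prod_{j}(1-e^{-t_{i_j}})=\prod_j(-S_{t_{i_j}})$, hence $(-1)^{\codim L_\alpha}ch^\T([\O_{L_\alpha}])_{|0}=\prod_j S_{t_{i_j}}$, a monomial in the $S_{t_i}$ with coefficient $1$. Summing with the nonnegative $c_\alpha$ yields the desired nonnegative polynomial.

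The main obstacle I expect is the degeneration step itself: ensuring that the chosen flat family preserves the property of having rational singularities in the generic fibre is not needed, but one does need the special fibre to be a reduced union of coordinate linear subspaces (or at least that its $K$-class expands with nonnegative coefficients), and one needs the $Tor$-independence that makes $[\O_Y]$ transform correctly under specialization. Here rationality is used in exactly the way indicated in the text after Theorem \ref{posy}: for rational singularities $Tor^*_{\O_{\C^n}}(\O_Y,\O_Z)$ is concentrated in a single degree, so the derived restriction to coordinate subspaces computes an honest (signed) sheaf class, and the alternating signs organize into the uniform sign $(-1)^{\codim Y}$. A secondary, more bookkeeping-type obstacle is keeping track of the normalization of variables — that $S_{t_i}$ is the restriction of $-ch(\O_{H_i})$ to the fixed point, as noted before Theorem \ref{posa} — so that the monomials genuinely have $+1$ coefficients and no hidden signs sneak in through the Chern character. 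Once these are handled, the reduction to Theorem \ref{posy}'s $K$-theoretic counterpart is immediate, and Theorem \ref{posa} follows.
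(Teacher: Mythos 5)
Your reduction to a single invariant subvariety $Y\subset\C^n$ with rational singularities, and the final bookkeeping (the Koszul computation for a coordinate subspace, and the fact that $S_{w+w'}=S_wS_{w'}+S_w+S_{w'}$ turns $S_w$ for any nonnegative weight $w$ into a nonnegative polynomial in the $S_{t_i}$), are fine and match what the paper needs. The gap is in your central step: you claim that a one-parameter (Gr\"obner) degeneration of $Y$ to an initial scheme supported on coordinate subspaces, combined with ``Kleiman transversality and $Tor$-vanishing,'' yields $(-1)^{\codim Y}[\O_Y]=\sum_\alpha c_\alpha[\O_{L_\alpha}]$ with $c_\alpha\ge 0$. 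Kleiman transversality requires a group acting transitively on the ambient space so that a general translate of $Y$ meets the reference subvarieties properly and with rational-singularity intersections; $\C^n$ with a torus action fixing the origin admits no such translates, so the Brion/AGM mechanism you cite simply does not apply in this setting. What flat degeneration does give is the equality $[\O_Y]=[\O_{\mathrm{in}(Y)}]$ in $K_\T(\C^n)$, but the initial scheme is in general non-reduced with embedded components, and the expansion of $[\O_{\mathrm{in}(Y)}]$ in the basis $(-1)^{\codim L}[\O_L]$ carries correction terms whose signs are not controlled --- this is exactly why degeneration proves the cohomological Theorem \ref{posy} (where only top-dimensional multiplicities matter) but not its $K$-theoretic analogue. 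Rational singularities of $Y$ tells you nothing about the special fibre of the degeneration, so the step where positivity is supposed to enter is unsupported.

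The paper's proof avoids this by manufacturing a genuine homogeneous space on which Kleiman transversality is available: it passes to the finite-dimensional Borel construction $E\T_m\times_\T\C^n\to B\T_m=(\P^m)^r$, takes a generic section $s$ of this globally generated bundle so that $s^{-1}(E\T_m\times_\T Y)$ is a subvariety of $(\P^m)^r$ of expected dimension with rational singularities, and then applies Brion's positivity theorem for the Grothendieck group of $G/P$ to expand its class in the Schubert basis $[\O_{X_I}]=(-1)^{\codim X_I}S_I$ with coefficients $(-1)^{\codim Y-\codim X_I}c_I$, $c_I\ge 0$. If you want to salvage your argument you would need to replace the degeneration step by this (or an equivalent) passage to a homogeneous space; as written, the proposal asserts the key positivity rather than proving it.
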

\begin{proof}
Let $Y\subset \C^n$ be an invariant subvariety with rational
singularities. Fix an approximation of the classifying space
$B\T_m=(\P^m)^r$ with the universal $\T$-bundle
$E\T_m=\boxtimes_{i=1}^r\O^*(1)$. Then the associated bundle
obtained via Borel construction $ET_\m\times_\T \C^n$ is
globally generated. Let $s$ be a generic section. Then
$s^{-1}(ET_\m\times_\T Y)$ has expected dimension, and it has at
most rational singularities. Let $X_I\subset (\P^m)^r$ be the
closure of a cell of the standard decomposition of the product of
projective spaces. By our choice of variables
$[\O_{X_I}]=(-1)^{\codim X_I}S_I$ under the identification of
$K_\T(\C^n)$ with $K_\T(pt)$ given by the section $s$. Applying
\cite[Theorem 1]{Br} for the homogeneous space $B\T_m=(\P^m)^r$ we
find that
$$[\O_{ET_\m\times_\T Y}]=\sum(-1)^{\codim Y-\codim X_I}c_I[\O_{X_I}]\,,$$
where  the numbers $c_I$ are nonnegative.
 \end{proof}

\section{Schubert varieties and cells}\label{dod}
The singularities of Schubert varieties in flag manifolds $G/B$ or more general in homogeneous spaces $G/P$ were studied by many authors, see for example the monograph \cite{BiLa}, especially \S4.4, or \cite{WoYo}. The maximal torus $T\subset G$ has a discrete fixed point set in $G/P$. Each fixed point corresponds to a Schubert cell. Whenever a Schubert cell contains another cell in the closure, one can ask what is the singularity at the boundary point. Even the question of smoothness has turned out to be difficult, see \cite[\S4.6]{BiLa}. Localized Hirzebruch class of the boundary point is an invariant which describes in a way the singularity. For example if the point is smooth, then the local Hirzebruch class decomposes into linear factors.  We have  computed localized Hirzebruch classes in some cases and noticed certain phenomenon.
Various computation experiments show that   the class
 $$mC_*(X\hookrightarrow
\C^n))_{|p}=[f_{\bullet*}\Omega^y_{X_\bullet}]_{|p}$$
 is positive for
Schubert varieties. The Schubert varieties are sums of open cells and
the positivity holds for cells as well. The expansion should be in
variables $\delta$ and $S_i$ for a  positive basis
 in the sense of \cite{AGM}. We have already given an example of the
cone over a quadric in $\P^1$, Example \ref{stozLGe} formula (\ref{stozLG}). The Hirzebruch class written in variables $\delta$ and $S_w=e^{-w}-1$ is equal to
$$\frac{ S_{2t_1}+ S_{2t_2} + \delta (2 S_{t_1+t_2} + S_{t_1+t_2}^2 + S_{2t_1} S_{2t_2}) + \delta^2 (S_{t_1+t_2} + 1) (S_{t_1+t_2} + 2)}{S_{2t_1} S_{2t_2}}\,.$$
 This is the singularity of the
codimension one Schubert variety in the Lagrangian Grassmannian
$LG(2)$. The class of the complement is equal to:{\small
$$\frac{\delta (S_{t_1+t_2} + 1) S_{2t_1} S_{2t_2} + \delta^2 (S_{t_1+t_2} + 1)
(S_{t_1+t_2}^2 + S_{2t_1} S_{2t_2} + S_{t_1+t_2}) + \delta^3 (S_{t_1+t_2} +
1)^3}{S_{2t_1} S_{2t_2}S_{t_1+t_2}}\,.$$ } It is remarkable that the
Hirzebruch class can be expressed by the variables associated to
the characters of the representation, although they do not span
whole equivariant cohomology of a point. This phenomenon is easy
to explain: resolving the singularity of $X$ we construct new
$\T$-varieties for which the weights of the tangent spaces are
combinations of the original weights.

The method of \cite{We} allows to compute the Hirzebruch class of
the codimension one Schubert variety in the classical Grassmannian
$Gr_n(\C^{2n})$. Locally this Schubert variety  is equal to the determinantal variety consisting
of singular quadratic matrices. The case $n=2$ is the quadratic
singularity in $\C^4$, which is toric. The Hirzebruch class was
computed in Example \ref{g24}, and the formula for the open complement is
the following:
$$\frac{\delta^2 (1 + S_{13}) (1 + S_{24}) }{S_{13}S_{14}S_{23}S_{24}}\times
\phantom{\Big(S_{14} S_{23} + S_{13} S_{24}+   \delta (S_{14} + S_{23} + 2 S_{14} S_{23} + S_{13} S_{24}))
 + \delta^2 (1 + S_{13}) (1 + S_{24})\Big)}$$
 \hfill$\times\Big(S_{14} S_{23} + S_{13} S_{24}+   \delta (S_{14} + S_{23} + 2 S_{14} S_{23} + S_{13} S_{24}))
 + \delta^2 (1 + S_{13}) (1 + S_{24})\Big)\,.$\vskip10pt

\noindent The formula is not symmetric since there is a relation between variables
$$(1 + S_{13}) (1 + S_{24})=(1 + S_{23}) (1 + S_{14})\,.$$
For bigger $n$ the formulas are quite complicated. Below we give
the result for $n=3$ restricted to one dimensional torus acting diagonally.
$$\frac{(\delta+1)^3(S+1)^3}{S^9}\Big(
6 S^6+
9\delta S^5 (2+3 S)+
5\delta^2 S^4 (6+15 S+10 S^2)+$$
$$+\delta^3S^3 (30+105 S+123 S^2+49 S^3)
+9\delta^4 S^2 (1+S)^3 (2+3 S)+$$
$$+\delta^5S (1+S)^3 (6+15 S+8 S^2)+
\delta^6(1+S)^6\Big)$$

Let us give another example.

\begin{example}\rm Let  $X$ be the open cell in the flag variety $Fl(3)=GL_3(\C)/B$.
The 0-dimensional Schubert cell which corresponds to the standard
flag has a neighbourhood (the opposite open cell) which can be
identified with the set of lower-triangular matrices with 1's at
the diagonal
$$\left(\begin{matrix}1& 0&0\\x_{21}&1&0\\x_{31}&x_{32}&1\end{matrix}\right)\,.$$
The three dimensional diagonal torus is acting   on $Fl(3)$
preserving the cell decomposition and the decomposition into opposite cells.  The intersection of the open cell with the opposite cell is  the complement of two
divisors $D_1=\{x_{31}=0\}$ and $D_2=\{x_{21}x_{32}-x_{31}=0\}$
defined by vanishing of lower-left corner determinants.
\begin{center}\pgfdeclareimage[height=6cm]{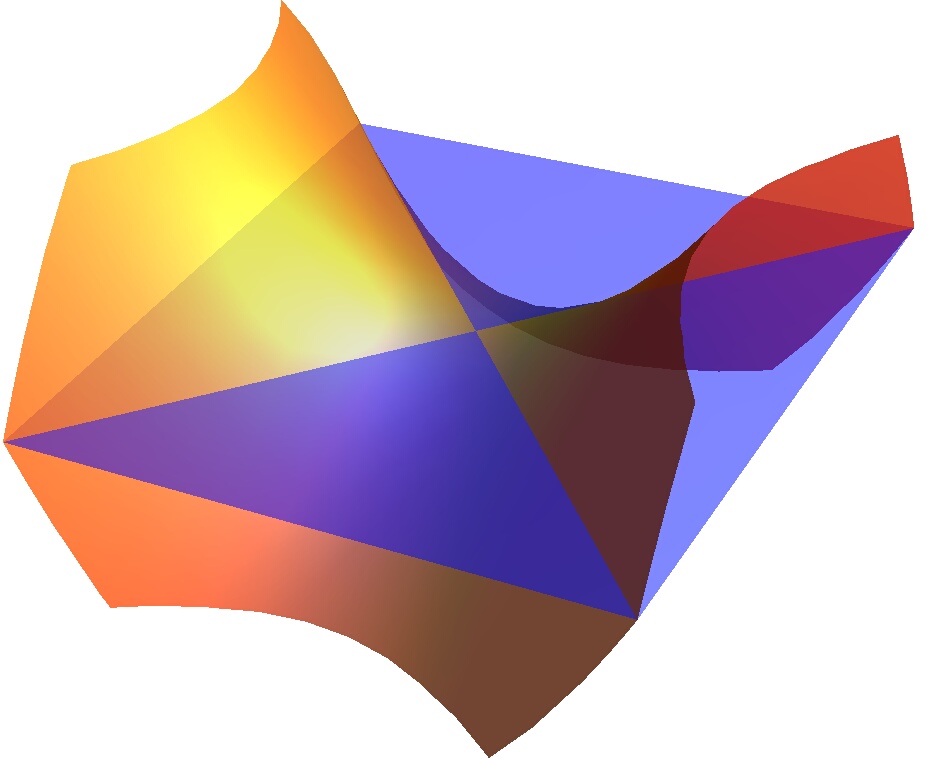}{fl3}\pgfuseimage{fl3}
\end{center}
The weight of the variable
$x_{ij}$ is $t_i-t_j$. The local
Hirzebruch class of the
open cell
is equal to
\begin{align*}&td_y^\T(\C^3)_{|0}-td_y^\T(D_1\hookrightarrow
\C^3)_{|0}-td_y^\T(D_2\hookrightarrow \C^3)_{|0}+td_y^\T(D_1\cap
D_2 \hookrightarrow \C^3)_{|0}\,.\end{align*}
 The intersection of the divisors is not transverse, but it is equal
 to the intersection of two coordinate lines $x_{21}=0$ and $x_{32}=0$ in the plane
 $\{x_{31}=0\}$. Therefore by the inclusion-exclusion formula
$td_y^\T(X\hookrightarrow \C^3)_{|0}$ is equal to
{\def\ff#1#2{\frac{1+y\frac{T_#2}{T_{#1}}}{1-\frac{T_#2}{T_{#1}}}}
\begin{align*}
\ff 1 2\cdot\ff 1 3\cdot\ff 2 3&-2\ff 1 2\cdot\ff 2 3
+\ff 1 2+\ff 2 3-1\,,
\\
\end{align*}}
where $T_i=e^{-t_i}$. After the substitution $S_{ij}=T_i/T_j-1$ and using the relation
$(1 + S_{21}) (1 + S_{ 32})=1 + S_{ 31}$ the Hirzebruch class can be written in the form
$$\frac\delta{S_{21}S_{31}S_{32}}(1+S_{21})
(1+S_{31})\big(S_{21 } S_{32} +\delta S_{21 } S_{32 } + \delta^2 (1 + S_{31 })\big)
\,,$$
\end{example}

We ask: Does  positivity always holds for Schubert cells in
a homogeneous space $G/P$? After substitution $T_i=e^{\delta
t_i}$ and letting $\delta\to 0$ we obtain
Chern-Schwartz-MacPherson class. The positivity of the {\it nonequivariant} Chern-Schwartz-MacPherson class in
the case of ordinary Grassmannians  (conjectured in \cite{AlMi}) was shown by Huh \cite{Huh}.
 Proving nonequivariant result Huh takes an advantage of the action of the Borel $B$ group preserving the cells.
The proof is based on the observation that the bundle
$T\widetilde{M}(-\log D)$ has a lot sections for a suitable
$B$-invariant resolution of the Schubert cell.
In general Huh considers the situation when a solvable group $B$ acts on the pair
$(M,U)$, where $U$ is a an open set contained in the nonsingular locus of $M$. Under the
assumption that the pair $(M,U)$ admits a $B$-equivariant resolution
$(\widetilde M, \widetilde U)$ on which $B$ acts with finitely many orbits it is shown
that the Chern-Schwartz-MacPherson class is effective. The equivariant version of this
result would give positivity after specialization $\delta=0$. Nevertheless the example of
non-simplicial toric varieties shows that the assumption of Huh is not strong enough to imply
positivity of the Hirzebruch class.

\end{document}